\theoremstyle{plain}
\newtheorem{theorem}{Theorem}[section]
\newtheorem{lemma}[theorem]{Lemma}
\newtheorem{proposition}[theorem]{Proposition}
\theoremstyle{definition}
\newtheorem{definition}[theorem]{Definition}
\newtheorem{remark}[theorem]{Remark}
\title[Homotopy theories of colored links and spatial graphs]{Homotopy theories of colored links and spatial graphs}
\author{Yuka Kotorii}
\address[Kotorii]{Mathematics Program, Graduate School of Advanced Science and Engineering, Hiroshima University, 1-7-1 Kagamiyama Higashi-hiroshima City, Hiroshima 739-8521 Japan}
\address[Kotorii]{International Institute for Sustainability with Knotted Chiral Meta Matter (WPI-SKCM$^2$), Hiroshima University, 1-3-2 Kagamiyama Higashi-hiroshima City, Hiroshima 739-8521 Japan}
\email{kotorii@hiroshima-u.ac.jp}
\author[Mizusawa]{Atsuhiko Mizusawa}
\address[Mizusawa]{Faculty of Science and Engineering, Waseda University, 3-4-1 Okubo, Shinjuku-ku, Tokyo 169-8555, Japan} 
\email{a\_mizusawa@aoni.waseda.jp}
\date{March 2023}
\keywords{link-homotopy, colored link, spatial graph, CL-homotpy, component-homotopy, Milnor invariant}
\begin{document}

\maketitle

%Section1 (Introduction)
%・用語の定義
%・主結果
%・先行研究
%Section2 (Homotopy theory for colored string links) 
%・Colored string link
%・Habegger-Lin Theorem (?)
%・colored string link のCL Hom. class の分類
%Section3 (Classifications)
%・colored link のCL Hom. class の分類
%・spatial graph のComp. Hom. class の分類

\begin{abstract}
Two links are called link-homotopic if they are transformed to each other by a sequence of self-crossing changes and ambient isotopies. 
The notion of link-homotopy is generalized to spatial graphs and it is called component-homotopy. 
The link-homotopy classes were classified by Habegger and Lin through the classification of the link-homotopy classes of string links. 
In this paper, we classify colored string links up to colored link-homotopy by using the Habegger-Lin theory. Moreover, we classify colored links and spatial graphs up to colored link-homotopy and component-homotopy respectively.  
\end{abstract}

\section{Introduction} \label{intro}

In this paper, we work in the piecewise linear category and components of links and edges of spatial graphs are ordered and oriented. A string link is a proper embedding of closed finite intervals to a cylinder, see Section \ref{ColStringLink} for details. 
\par
Two links (resp. string links) are \textit{link-homotopic} if one can be transformed to the other by a sequence of self-crossing changes (i.e. crossing changes between arcs of the same components) and ambient isotopies (resp. ambient isotopies relative to endpoints of each component). Let $\mathcal{L}(n)$ (resp. $\mathcal{H}(n)$) be the set of the link-homotopy classes of $n$-component links (resp. string links). 
\par
The notion of the link-homotopy was introduced by Milnor and he classified $\mathcal{L}(n)$ for $n=2,3$ by numerical invariants $\overline{\mu}_L(I)$ called Milnor invariants for links $L$ \cite{Mil, Mil2}. Levine \cite{Le2} gave a classification of $\mathcal{L}(4)$ by giving its explicit presentation. Habegger and Lin \cite{HL} gave a classification of $\mathcal{L}(n)$ for all $n$. 
They first clarified the structure of $\mathcal{H}(n)$ (see Theorem \ref{SLclassify} in Section \ref{ColStringLink}) and classified $\mathcal{H}(n)$ by $\nu(n)$ kinds of the Milnor invariants $\mu_{S}(I)$ which are defined for string links $S$, where 
$$\nu(n)\!=\!\sum_{k=2}^{n}(k\!-\!2)!
\left(\!\!
\begin{array}{c}
n \\ k
\end{array}
\!\!\right).
$$
The ranges of $\mu_{S}(I)$ are all integers $\mathbb{Z}$ and $\mathcal{H}(n)$ has a presentation $\mathbb{Z}^{\nu(n)}$ as a set. 
Then Habegger and Lin made a Markov-type theorem for $\mathcal{L}(n)$ by considering the closures of elements of $\mathcal{H}(n)$ 
(see Theorem \ref{Lclassify} in Section \ref{ColStringLink}). They determined the equivalence relation $\sim$ between two string links in $\mathcal{H}(n)$
which become link-homotopic links by taking closures. Therefore $\mathcal{L}(n)$ has presentations  
$$\mathcal{L}(n)=\mathcal{H}(n)/\sim=\mathbb{Z}^{\nu(n)}/\sim'$$
as a set, where $\sim'$ is an equivalence relation corresponding to $\sim$.
Note that, for $n\geq 4$, $L(n)$ has not been classified by numerical invariants. 
However, in the same paper,  Habegger and Lin showed that there is an algorithm which determines whether given two links are link-homotopic or not. This algorithm uses actions of the relation $\sim$ or $\sim'$. 
For $n=4, 5$, the actions of the relation $\sim'$ were calculated explicitly in \cite{Gra, KM2, KM3} and the algorithm can
run, see \cite{KM2, KM3} for details. We remark that, in \cite{Le2, KM2}, several subsets of $\mathcal{L}(4)$ which are classified by numerical invariants are given. 
\\ 

\par
In this paper, we study generalizations of the link-homotopy for colored links and spatial graphs. 
%In this paper, we study generalizations of the link-homotopy for colored links, spatial graphs and handlebody-links. 

\if0 %定義1
\par
For a link, we put a pair $(i,j)$ of natural numbers to each component of the link, where different components have different colors and we call the first entry of the pair a \textit{color} of the component. 
A link with the colors is called \textit{colored link}. 
We give an order to the components of colored links $L$ by the lexicographical order of the colors.  
The components which have the same first entries $i$ of the colors $(i,j)$ make a sublink $L_i$ of $L$. The sublink $L_i$ inherits the order of components from $L$. Let $l_i$ be the number of components of $L_i$. 
We use the successive natural numbers from 1 for the coloring and the component colored by $(i,j)$ is the $j$-th component of $L_i$ and 
$(l_1+\cdots+l_{i-1}+j)$-th component of $L$. 
\fi

\if0 %定義2
Consider an $m$-tuple ${\bm m}=(l_1, \dots, l_m)$ of natural numbers such that $\sum_{i} l_i=n$.  
An $n$-component link $L$ is called \textit{colored link} (with the ${\bm m}$-\textit{coloring}) if it has a division into $m$ sublinks $L_i$ such that $L=\bigcup_i L_i$, $L_i\cap L_j=\phi$ and the component number of the $i$-th sublink $L_i$ is $l_i$. We assume that the $j$-th component of $L_i$ is the $(l_1+\cdots+l_{i-1}+j)$-th component of $L$.  
\fi

%colored link
\par
For a link $L$, we make a surjective map $\phi:C(L)\to Col(m)$ from the set $C(L)$ of components of $L$ to the set $Col(m)=\{1,\dots,m\}$ of natural numbers with the cardinality $m$, where we call the map $\phi$ a \textit{coloring} of $L$ and the elements of $Col(m)$ \textit{colors}. A link with a coloring is called a \textit{colored link}. The components mapped to $i$ by the coloring are called \textit{$i$-colored components}. 
For a color $i$, an \textit{$i$-sublink} $L_i$ of a colored link $L$ is a sublink consisting of the components colored by $i$, where $L_i$ inherits the order of $L$. Let $l_i$ be the number of components of $L_i$. 
Without loss of generality, we assume that the $j$-th component of $L_i$ is the $(\sum_{k=1}^{i-1}l_k+j)$-th component of $L$. 
We call the $j$-th component of $L_i$ the $(i,j)$-th component of $L$. Here the order of the component $L$ is equivalent to the lexicographical order of $(i,j)$. 
For simplicity, we denote the pair $(i,l_i)$ of a color $i$ and the number $l_i$ of components of $L_i$
by $i_{l_i}$. 
%use a notation $i_{l_i}$ to express the pair $(i,l_i)$ of a color $i$ and the number $l_i$ of components of $L_i$. 
The sequence ${\bm l}=(1_{l_1},\dots,m_{l_m})$ of the pairs for a colored link $L$ is called a \textit{component decomposition} of $L$. 
We also call a string link with coloring a \textit{colored string link}. For a color $i$, an \textit{$i$-sub-string link} $\sigma_i$ of a colored string link $\sigma$ is a sub-string link consisting of the components with the color $i$.
\par
Two colored links (resp. colored string links) are \textit{CL-homotopic} if one can be transformed to the other by a sequence of crossing changes between components with the same colors and ambient isotopies. 
%The CL-homotopy classes of colored links was studied in \cite{FT}. 

%spatial graph
\par
A \textit{spatial graph} is an image $\psi(G)$ of a spatial embedding $\psi:G\to S^3$ of a finite graph $G$ to the three sphere $S^3$. In this paper, valencies of vertices of a graph are more than or equal to one. 
For the spatial graphs, we give orders to the components and the edges of each component. We call the $j$-th edge of the $i$-th component of the spatial graph its $(i,j)$-th edge. 
Then the lexicographical order of $(i,j)$ gives an order to the set of edges of the spatial graph. 
Two spatial graphs $\psi_1(G)$ and $\psi_2(G)$ of a graph $G$ are called \textit{component-homotopic} \cite{F} if one can be transformed to the other by a sequence of ambient isotopies and self-crossing changes. Here the self-crossing change of a spatial graph is a crossing change between arcs belonging to edges of the same component. 
\par 
The CL-homotopy theory of colored links was studied in \cite{FT} by using the \textit{colored Milnor groups} (CMGs). Fleming \cite{F} defined the notion of the component-homotopy for spatial graphs and studied it through the CMGs and CL-homotopy classes of constituent links of spatial graphs. He defined numerical invariant for the component-homotopy classes. In \cite{NM}, the component-homotopy classes of 3-component spatial graphs are classified by giving their structures. 

\begin{remark}
Taniyama \cite{T} also generalized the notion of link-homotopy for spatial graphs by two ways. Two spatial embeddings of a graph are \textit{edge-homotopic} if they are transformed to each other by a sequence of crossing changes between arcs belonging to the same edges and ambient isotopies. Two spatial embeddings of a graph are \textit{vertex-homotopic} is they are transformed to each other by a sequence of crossing changes between arcs belonging to adjacent edges of vertices and ambient isotopies. Note that, in \cite{T}, the edge-homotopy and the vertex-homotopy were called homotopy and weak-homotopy of graphs respectively. The edge-homotopy classes of the complete graph $K_4$ with four vertices was classified in \cite{Ni}. 
\end{remark}

\if0
%HL-homotopy
\par A handlebody-link \cite{Ishii} is an embedding of finitely many handlebodies to $S^3$. A handlebody-link can be represented by a spatial graph whose regular neighborhood is the handlebody-link. 
Two handlebody-links are \textit{HL-homotopic} if two spatial graphs representing them are transformed to each other by a sequence of self-crossing changes, ambient isotopies and \textit{edge-slide moves}. Here an edge-slide move is sliding an end-point of an edge from one end-point to the other end-point of another edge, see Figure \ref{edge-slide}.
Let $\mathcal{HL}(n)$ be the set of the HL-homotopy classes of $n$-component handlebody-links. For $n=2$, $\mathcal{HL}(2)$ was classified in \cite{MN} by using the linking numbers of handlebody-links \cite{Miz1}. The Milnor invariants for $\mathcal{HL}(n)$ were defined and special subsets of $\mathcal{HL}(n)$ were classified by the Milnor invariants in \cite{KM1}. For the 3-component case, $\mathcal{HL}(3)$ was classified by giving its structure in \cite{Miz2}. 

\begin{figure}[h]
$$
\raisebox{-20 pt}{\begin{overpic}[width=80pt]{Edge-slide01.pdf}
\end{overpic}} 
\quad\longleftrightarrow\quad
\raisebox{-20 pt}{\begin{overpic}[width=80pt]{Edge-slide02.pdf}
\end{overpic}} 
$$ 
\vspace{0.0cm}
\caption{An edge-slide move.}\label{edge-slide}
\end{figure}
\fi

\par
In this paper, we naturally generalize the Habegger-Lin theory \cite{HL} to colored string links. 
Let $\mathcal{CH}({\bm l})$ (resp. $\mathcal{CL}({\bm l})$) be the set of the CL-homotopy classes of colored string links (resp. colored links) with a component decomposition ${\bm l}=(1_{l_1},\dots,m_{l_m})$. 
A \textit{bouquet graph} is a graph with exactly one vertex. We call a graph $\Gamma$ a \textit{B-graph} if $\Gamma$'s components are all bouquet graphs and a spatial embedding of a B-graph a \textit{spatial B-graph}. 
Let $\mathcal{G}({\bm l})$ be the set of component-homotopy classes of $m$-component spatial B-graphs whose $i$-th component has $l_i$ edges.  
 We show that $\mathcal{CH}({\bm l})$ can be decomposed to reduced colored free groups (see Section \ref{classifications}). 
We define the Milnor invariants for colored string links and classify $\mathcal{CH}({\bm l})$. 
Then we classify $\mathcal{CL}({\bm l})$ by giving a Markov-type theorem for $\mathcal{CH}({\bm l})$. Moreover, by taking a special closure (we call it a G-closure) of colored string links, we also give a Markov-type theorem for  $\mathcal{G}({\bm l})$. This induces a classification of the component-homotopy classes of spatial graphs. \\
%The classification of $\mathcal{G}({\bm l})$ also gives a classification of $\mathcal{HL}(n)$ by taking modulo of the edge-slide moves. \\

\par
This paper is organised as follows. In Section 2, we review the Habegger-Lin theory which classifies $\mathcal{L}(n)$ and $\mathcal{H}(n)$. In Section 3, we classify $\mathcal{CH}({\bm l})$ by applying the Habegger-Lin theory. Then, by considering closures of colored string links, we classify $\mathcal{CL}({\bm l})$ and $\mathcal{G}({\bm l})$. That induces a classification of the component-homotopy classes of spatial graphs. 
%In section 4, we give a classification of $\mathcal{HL}(n)$ which is induced by that of $\mathcal{G}({\bm l})$. An example of the 4-component case is given. 

\section*{acknowledgement}
Y.K. is supported by ACT-X and JSPS KAKENHI, Grant-in-Aid for Early-Career Scientists Grant Number 20K14322, and by RIKEN iTHEMS Program and World Premier International Research Center Initiative (WPI) program, International Institute for Sustainability with Knotted Chiral Meta Matter (WPI-SKCM$^2$).

\section{Habegger and Lin's theory} \label{ColStringLink}

In this section, we review the Habegger-Lin theory which classifies $\mathcal{H}(n)$ and $\mathcal{L}(n)$. 
%Then we classify $\mathcal{CH}({\bm l})$ by taking an analogue of the theory. 

\subsection{string links}
Let $p_1, \cdots , p_n$ be points lying in order on the $x$-axis on the interior of the unit disk $D^2$. 
An {\it $n$-string link} $\sigma=\sigma_1 \cup \cdots \cup \sigma_n$ is a proper embedding of $n$ disjoint unit intervals $I_1, \cdots , I_n$ to $D^2 \times [0,1]$ such that for each $i=1, \cdots, n$, $\sigma_i(0)=(p_i, 0)$ and $\sigma_i(1)=(p_i, 1)$, where $\sigma_i$ is called the $i$-th string of the string link $\sigma$ (see Figure \ref{stringlink} left). 
Each string of a string link inherits an orientation from the usual orientation of the interval.
%Figure \ref{stringlink} left shows an example of an $n$-component string link.
%a planar projection of a 4-string link. 
Composition of $n$-string links is defined as follows.
Let $\sigma=\sigma_1 \cup \cdots \cup \sigma_n$ and $\sigma'=\sigma'_1 \cup  \cdots \cup \sigma'_n$ be two string links. 
Then the {\it composition}  $\sigma\sigma'= (\sigma\sigma')_1 \cup  \cdots \cup (\sigma\sigma')_n$ of  $\sigma$ and  $\sigma'$ is the string link defined by $(\sigma\sigma')_i =h_1(\sigma_i) \cup h_2(\sigma'_i)$ for each $i=1, \cdots, n$, where $h_1,h_2: D^2 \times [0,1] \rightarrow D^2 \times [0,1]$ are embeddings defined by 
\[ h_1(p,t)=(p,\frac{1}{2}t) \text{ and } h_2(p,t)=(p,\frac{1}{2}+\frac{1}{2}t)  \]
for any $p \in D^2$ and $t \in [0,1]$ (see Figure \ref{stringlink} right). 
The {\it trivial} $n$-string link ${\bm 1}$ consists of $\sigma (I_i)={p_i} \times [0,1]$ for each $i=1, \cdots, n$.  
It is known that for fixed $n$, the set of link-homotopy classes $\mathcal{H}(n)$ of $n$-string links forms a group with multiplication induced by the composition of string links and the link-homotopy class of ${\bm 1}$ as the identity element.  

\begin{figure}[h]
$$
\raisebox{-22 pt}{\begin{overpic}[width=170pt]{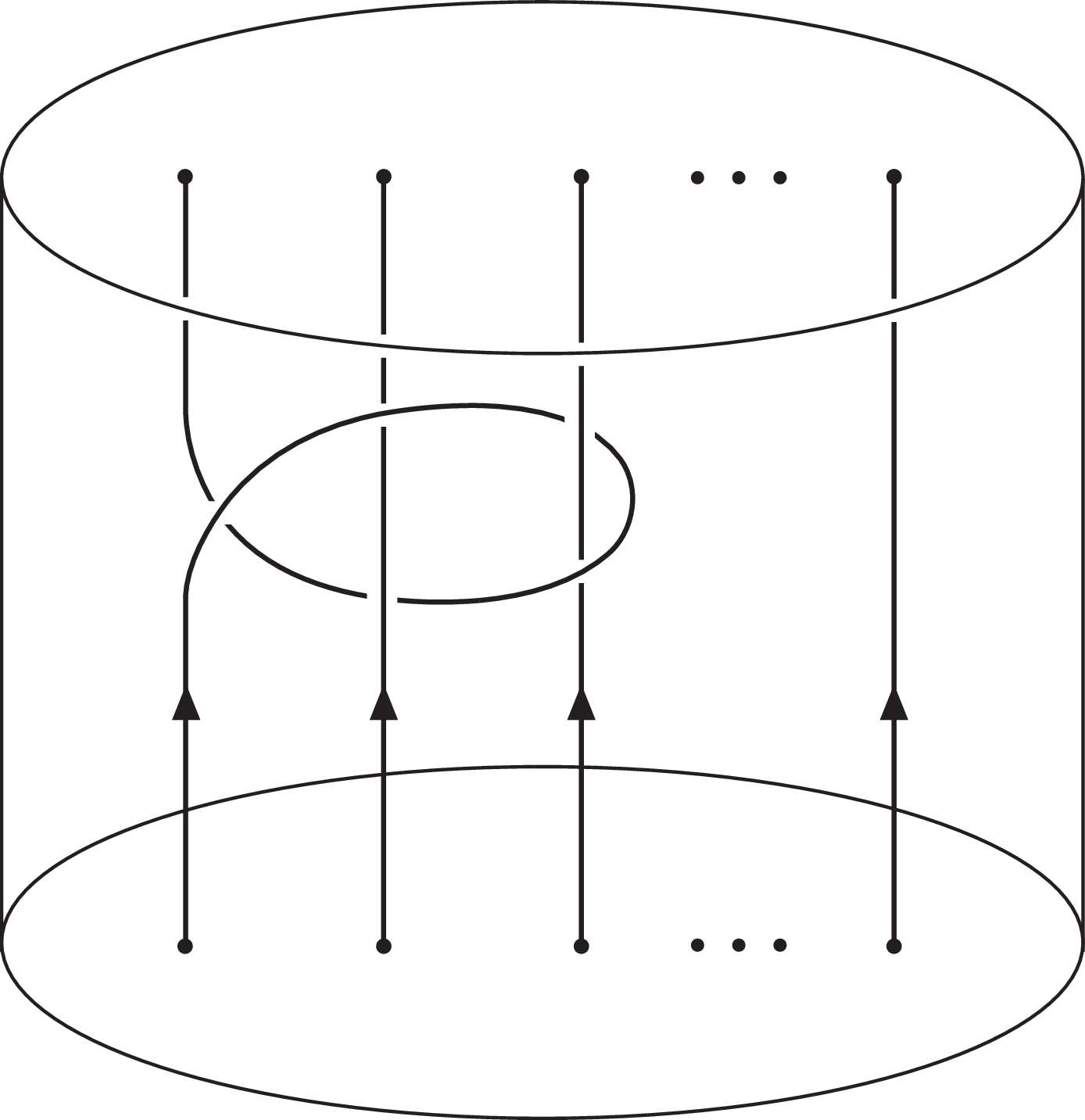}
%\put(81,59){$b$} 
%\put(23,12){$x_1$} \put(54,12){$x_2$} 
%\put(86,12){$x_3$} \put(135,12){$x_n$}
\put(24,157){$p_1$} \put(55,157){$p_2$} 
\put(87,157){$p_3$} \put(136,157){$p_n$} 
\put(-43,173){$D^2\times [0,1]$} 
\end{overpic}} 
\hspace{2.0cm}
\raisebox{-0 pt}{\begin{overpic}[width=130pt]{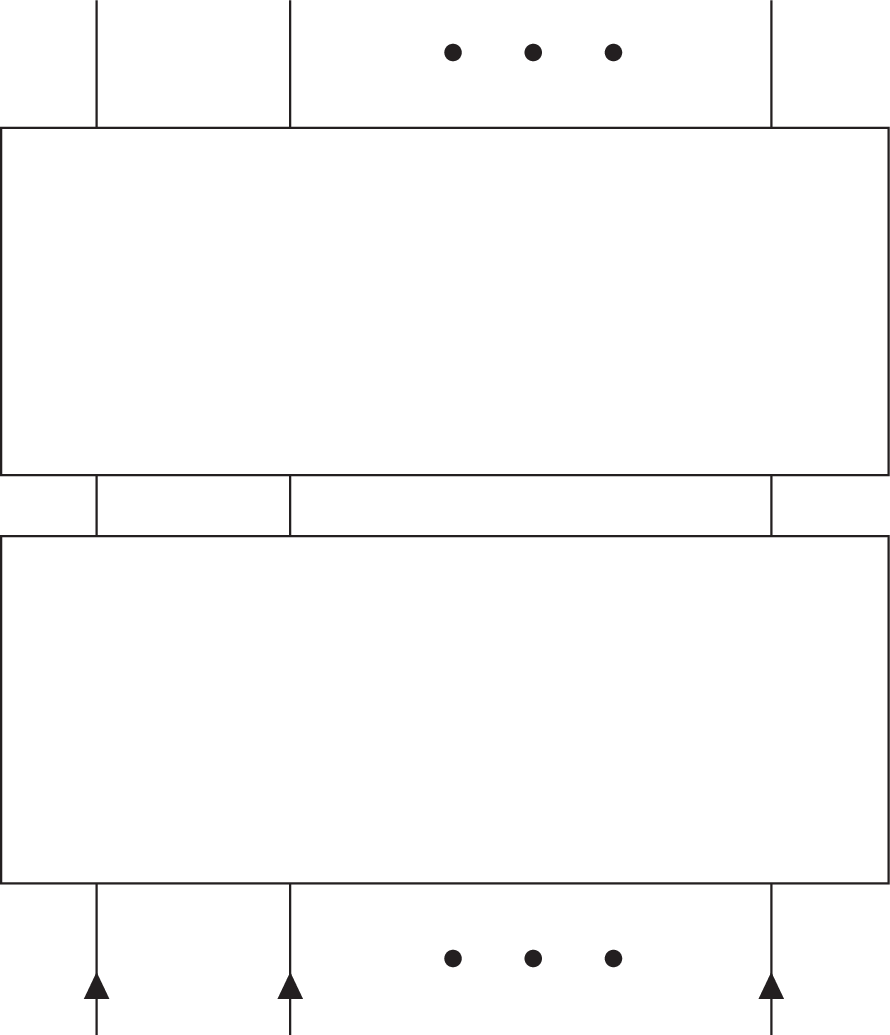}
\put(62,45){\Large $\sigma$} \put(62,103){\Large $\sigma'$}
\put(14,-13){$1$} \put(43,-13){$2$} 
\put(113,-13){$n$} 
\end{overpic}} 
$$ 
\vspace{0.0cm}
\caption{An $n$-string link and a composition.}\label{stringlink}
\end{figure}

\subsection{The link-homotopy classes of string links}
Let $F(n)$ be the free group with $n$ generators $x_i$ $(1\leq i\leq n)$. We identify the generators of $F(n)$ with those of the fundamental group of
$D^2\setminus \{p_1, \dots, p_{n-1}\}$ in Figure \ref{RF-generator}. 
Let $RF(n)$ be a \textit{reduced free group} which is $F(n)$ modulo the relations $[x_i,x^g_i]=1$ for each $i$ and any $g\in F(n)$, where $[a,b]=aba^{-1}b^{-1}$ is a commutator and $x^g_i=gx_ig^{-1}$ is a conjugation of $x_i$ by $g$. Then Habegger and Lin showed a group structure of $\mathcal{H}(n)$ as follows. 

\begin{figure}[h]
$$
\raisebox{-22 pt}{\begin{overpic}[width=170pt]{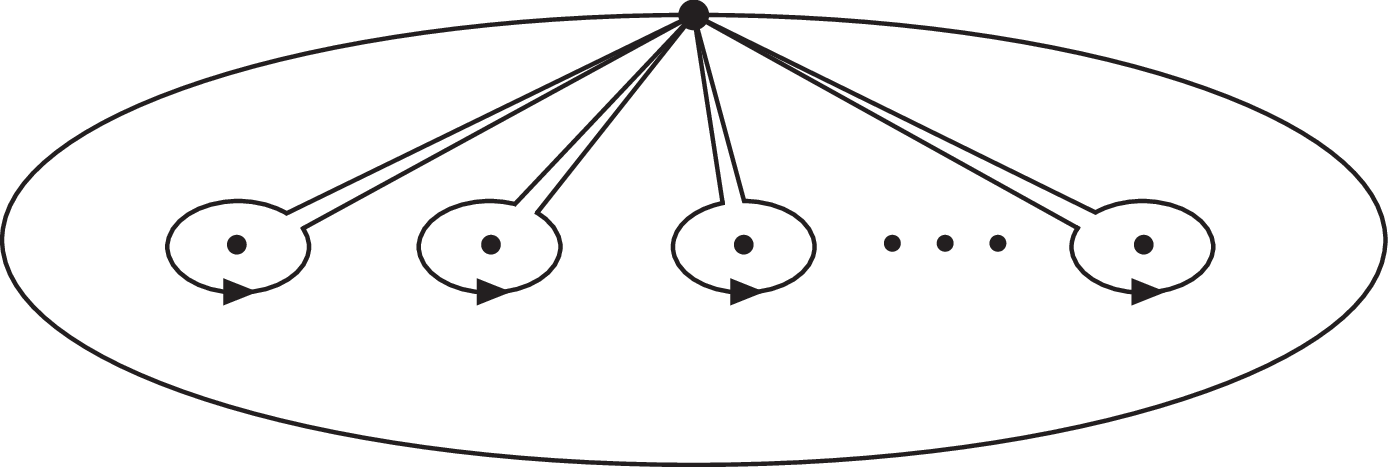}
\put(81,59){$b$} 
\put(23,12){$x_1$} \put(54,12){$x_2$} 
\put(86,12){$x_3$} \put(135,12){$x_n$}
%\put(24,157){$p_1$} \put(55,157){$p_2$} 
%\put(87,157){$p_3$} \put(136,157){$p_n$} 
%\put(-43,173){$D^2\times [0,1]$} 
\end{overpic}} 
$$ 
\vspace{0.0cm}
\caption{Generators of $\pi_1(D^2\setminus \{p_1, \dots, p_n\})$.}\label{RF-generator}
\end{figure}

\begin{theorem}[{\cite{HL}}] \label{SLclassify}
There is a split short exact sequence, 
$$1\to RF(n-1)\overset{}{\to} \mathcal{H}(n)\to \mathcal{H}(n-1)\to 1.$$
Then,
\begin{equation*}
\mathcal{H}(n)=\mathcal{H}(n-1)\ltimes RF(n-1).  
\end{equation*}
\end{theorem}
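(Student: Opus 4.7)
The plan is to define the forgetting map $\pi:\mathcal{H}(n)\to\mathcal{H}(n-1)$, exhibit a splitting, and identify $\ker\pi$ with $RF(n-1)$. For $\pi$, send the class of $\sigma=\sigma_1\cup\cdots\cup\sigma_n$ to the class of $\sigma_1\cup\cdots\cup\sigma_{n-1}$. Since any link-homotopy of $\sigma$ restricts to a link-homotopy of the first $n-1$ strands, this is well-defined, and it is a homomorphism because composition of string links is performed strand by strand. A section $s:\mathcal{H}(n-1)\to\mathcal{H}(n)$ is given by adjoining the trivial $n$-th strand $\{p_n\}\times[0,1]$; clearly $\pi\circ s=\mathrm{id}$, so the sequence splits as soon as it is shown to be exact.

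Next I would identify $\ker\pi$. Given $\sigma\in\ker\pi$, by definition the first $n-1$ strands form a string link link-homotopic to the trivial $(n-1)$-string link, so we can assume after link-homotopy that $\sigma_i=\{p_i\}\times[0,1]$ for $i=1,\dots,n-1$. Then $\sigma_n$ is a proper arc in $(D^2\setminus\{p_1,\dots,p_{n-1}\})\times[0,1]$ from $(p_n,0)$ to $(p_n,1)$, and projection to $D^2\setminus\{p_1,\dots,p_{n-1}\}$ produces a loop based at $p_n$, hence an element $\mu(\sigma)\in\pi_1(D^2\setminus\{p_1,\dots,p_{n-1}\})=F(n-1)$ expressed in the meridians $x_i$ of Figure \ref{RF-generator}. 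Composition of string links translates to concatenation of these loops, so $\mu$ is a homomorphism wherever it is defined, and surjectivity onto $F(n-1)$ is realised by braiding $\sigma_n$ around the trivial strands to spell out any prescribed word.

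The core task is to show that $\mu$ descends to a well-defined isomorphism $\ker\pi\xrightarrow{\cong}RF(n-1)$. For well-definedness, I would analyse how a self-crossing change of $\sigma_n$ alters $\mu(\sigma)$: locally, passing $\sigma_n$ through itself changes the associated word by a factor of the form $[x_i^{\,g},x_i^{\,h}]$ with $g,h\in F(n-1)$, which is trivial in $RF(n-1)$ by the defining relations $[x_i,x_i^g]=1$. Combined with invariance under link-homotopies that move only the trivial strands (which just changes the arc $\sigma_n$ by isotopy in the complement), this gives a well-defined map $\ker\pi\to RF(n-1)$.

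The main obstacle is injectivity: showing that if $\mu(\sigma)=1$ in $RF(n-1)$, then the arc $\sigma_n$ can actually be link-homotoped, relative to its endpoints and within the complement of the trivial strands, to $\{p_n\}\times[0,1]$. This is the content of Milnor's classical theorem identifying the ``link-homotopy closure'' of the meridional fundamental group of $n-1$ unknotted arcs with $RF(n-1)$: any element killed in $RF(n-1)$ can be geometrically undone by a finite sequence of self-crossing changes of $\sigma_n$, realised through the residual nilpotence of $RF(n-1)$ and its detection by Milnor's $\overline{\mu}$-invariants. Once injectivity is established, the split exact sequence and the semidirect product decomposition $\mathcal{H}(n)=\mathcal{H}(n-1)\ltimes RF(n-1)$ follow immediately from the existence of the section $s$.
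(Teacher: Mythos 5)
The paper itself offers no proof of this statement---it is quoted directly from Habegger--Lin \cite{HL}---so your sketch can only be compared with their argument, whose skeleton (forgetful homomorphism, trivial-strand section, identification of the kernel with $RF(n-1)$) you reproduce correctly. The genuine problem lies in the two places where you actually argue, and it is the same misattribution both times. A self-crossing change of $\sigma_n$ is supported in a ball disjoint from the trivial strands, hence is a homotopy of the $n$-th arc inside $(D^2\setminus\{p_1,\dots,p_{n-1}\})\times[0,1]$ rel endpoints; it therefore does not change the class $\mu(\sigma)\in F(n-1)$ at all, and in particular does not introduce factors $[x_i^{\,g},x_i^{\,h}]$. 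The relations defining $RF(n-1)$ come instead from exactly the moves you wave away: a link-homotopy of the whole string link need not keep the first $n-1$ strands vertical, and when strand $i$ ($i\le n-1$) passes through \emph{itself}, the word read by $\sigma_n$ changes by products of commutators $[x_i^{\,g},x_i^{\,h}]$. Such a deformation is not ``just an isotopy of $\sigma_n$ in the complement,'' and checking invariance under it is precisely the content of well-definedness; as written, your argument verifies the moves that need no verification and skips the ones that do.

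The same confusion makes your injectivity step fail as stated. If $\mu(\sigma)=1$ in $RF(n-1)$ but $\mu(\sigma)\neq 1$ in $F(n-1)$, then no finite sequence of self-crossing changes of $\sigma_n$ together with isotopy rel endpoints in the complement of the fixed trivial strands can trivialize $\sigma_n$, because all of these preserve the $F(n-1)$-class; the trivialization must be carried out by self-crossing changes of $\sigma_1,\dots,\sigma_{n-1}$ (Milnor's ``finger through itself'' realization of the relations), which is the correct geometric lemma hiding behind your citation. Moreover, the appeal to ``residual nilpotence of $RF(n-1)$ and detection by Milnor's $\overline{\mu}$-invariants'' does not amount to a proof: $RF(n-1)$ is in fact nilpotent, and in \cite{HL} the classification of $\mathcal{H}(n)$ by Milnor invariants is deduced \emph{from} this decomposition theorem. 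What the injectivity actually requires is a map in the opposite direction---for instance the class of the $n$-th longitude in the link-homotopy-invariant reduced quotient of the fundamental group of the complement of the first $n-1$ strands---together with a check that its composite with your geometric realization is the identity on $RF(n-1)$. Until the roles of the two kinds of self-crossing changes are sorted out and such an inverse invariant is supplied, the step you yourself identify as the main obstacle remains open in your sketch.
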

In Theorem \ref{SLclassify}, the group homomorphism $RF(n-1)\overset{}{\to} \mathcal{H}(n)$ maps $b\in RF(n-1)$ to the string link with the trivial first $n-1$ components and the $n$-th component 
determined from $b$ by treating it an element in $D^2\setminus \{p_1, \dots, p_n\}$, $\mathcal{H}(n)\to \mathcal{H}(n-1)$ is defined by omitting the $n$-th component and the splitting homomorphism $\mathcal{H}(n-1)\to \mathcal{H}(n)$ adds a trivial $n$-th component to an $(n-1)$-component string link. Using Theorem \ref{SLclassify} recursively, we have a decomposition of $\mathcal{H}(n)$ to reduced groups $RF(i)$, that is
\begin{equation*}
\mathcal{H}(n)=(\cdots(RF(1)\ltimes RF(2))\ltimes \cdots)\ltimes RF(n-1). \label{decomposition02}
\end{equation*}

\begin{remark}
From Theorem \ref{SLclassify}, for any $i$ $(1\leq i\leq n)$, we have a decomposition
\begin{equation}
\mathcal{H}(n)=\mathcal{H}(1,\dots,\hat{i},\dots,n)\ltimes RF(1,\dots,\hat{i},\dots,n), \label{decomposition01} 
\end{equation}
where $\mathcal{H}(1,\dots,\hat{i},\dots,n)$ is the set of link-homotopy classes of the $(n-1)$-component sub-string links obtained by omitting the $i$-th components from $n$-component string links and $RF(1,\dots,\hat{i},\dots,n)$ is the reduced free group generated by $x_j$ $(1\leq j\leq n, j\neq i)$. Moreover,
for any permutation $i_1i_2\dots i_n$ of $12\dots n$, we have a decomposition 
\begin{equation*}
\mathcal{H}(n)=(\cdots(RF_{i_1}\ltimes RF_{i_1i_2})\ltimes \cdots)\ltimes RF_{i_1\dots i_{n-1}}, 
\end{equation*}
where $RF_{i_1\dots i_{j}}$ is the reduced free group generated by $x_{i_1}, \dots ,x_{i_j}$. 
\end{remark}

\par
We define a group action $\Sigma\cdot \sigma$ of $\Sigma\in\mathcal{H}(2n)$ to $\sigma\in\mathcal{H}(n)$. 
For convenience, we call the first $n$ components of string links in $\mathcal{H}(2n)$ 
the $\overline{n}, \overline{n-1}, \dots, \overline{1}$-th components in order and the last $n$ components $1, 2,\dots, n$-th components, see Figure \ref{H2-action} left. 
The action $\Sigma\cdot \sigma$ is defined as in Figure \ref{H2-action} right. 
Under the action, the $\overline{i}$-th component of $\Sigma$ becomes $i$-th component of the resulting string link. We choose the generators of the fundamental group of $D^2\setminus \{p_{\overline{n}},\dots,p_{\overline{1}},p_1,\dots,p_n\}$ as in Figure \ref{H2-generators} so that they are compatible with those of $D^2\setminus \{p_1, \dots, p_n\}$. 
\begin{figure}[h]
$$
\raisebox{10 pt}{\begin{overpic}[width=180pt]{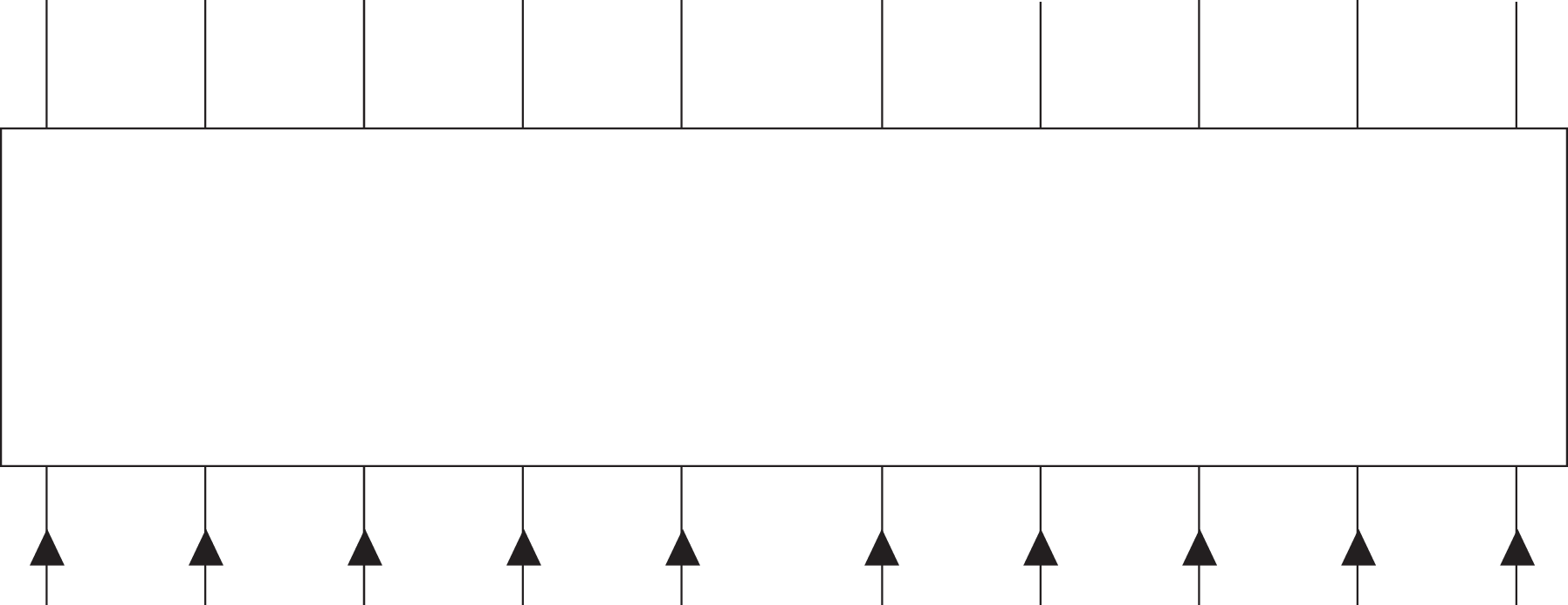}
\put(84,30){\large$\Sigma$}
\put(75,-14){$\overline{1}$} \put(57,-14){$\overline{2}$} 
\put(26,-11){$\cdots$} \put(2,-13){$\overline{n}$}
\put(98,-12){$1$} \put(116,-12){$2$} 
\put(140,-11){$\cdots$} \put(170,-11){$n$}
\end{overpic}} 
\hspace{2.0cm}
\raisebox{-20 pt}{\begin{overpic}[width=180pt]{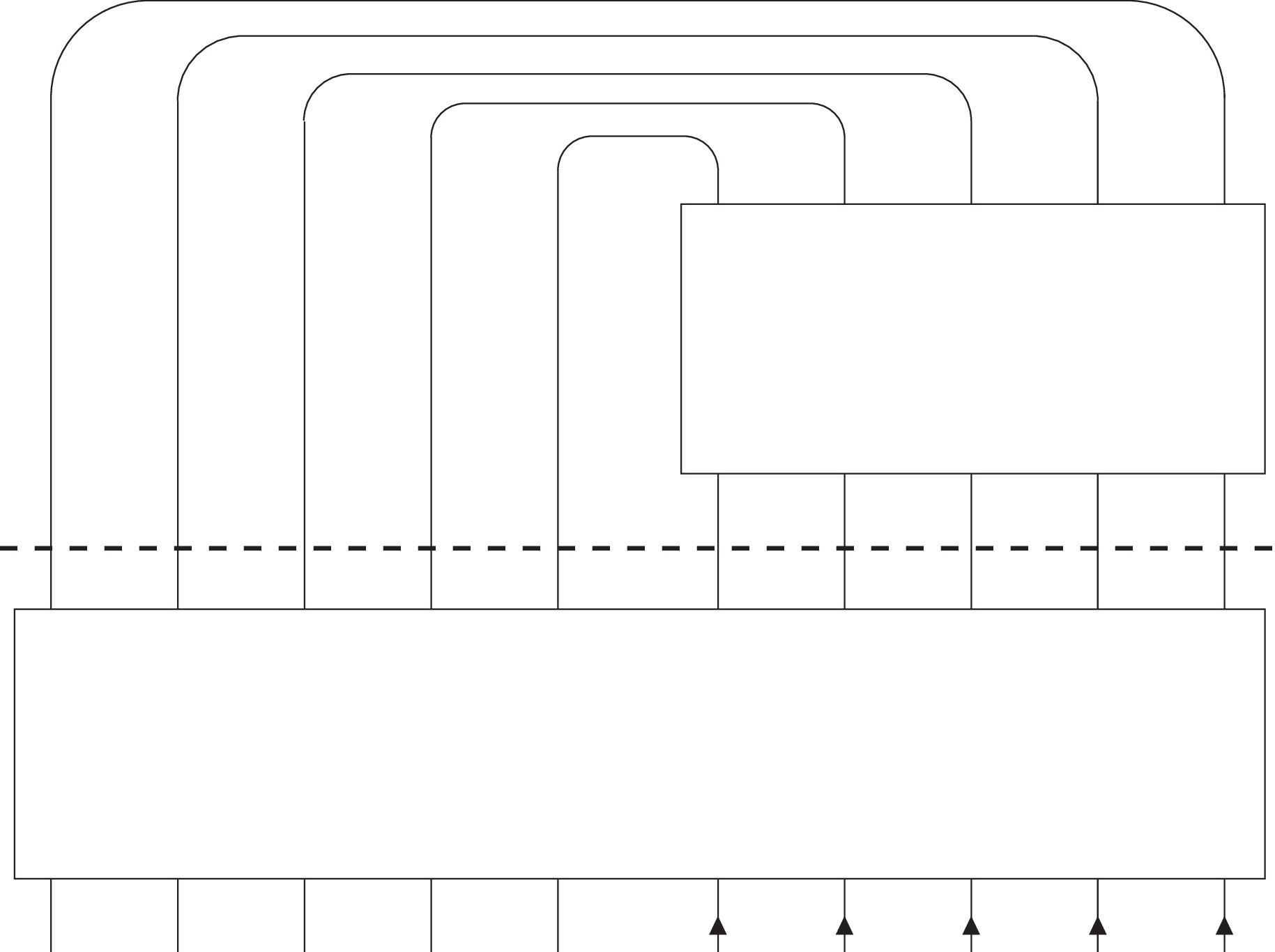}
\put(133,83){\large$\sigma$} \put(84,24){\large$\Sigma$}
\put(98,-12){$1$} \put(116,-12){$2$}
\put(139,-11){$\cdots$} \put(169,-11){$n$}
\end{overpic}} 
$$ 
\vspace{0.1cm}
\caption{An action of $\mathcal{H}(2n)$ on $\mathcal{H}(n)$.}\label{H2-action}
\end{figure}

\begin{figure}[h]
$$
\raisebox{10 pt}{\begin{overpic}[width=240pt]{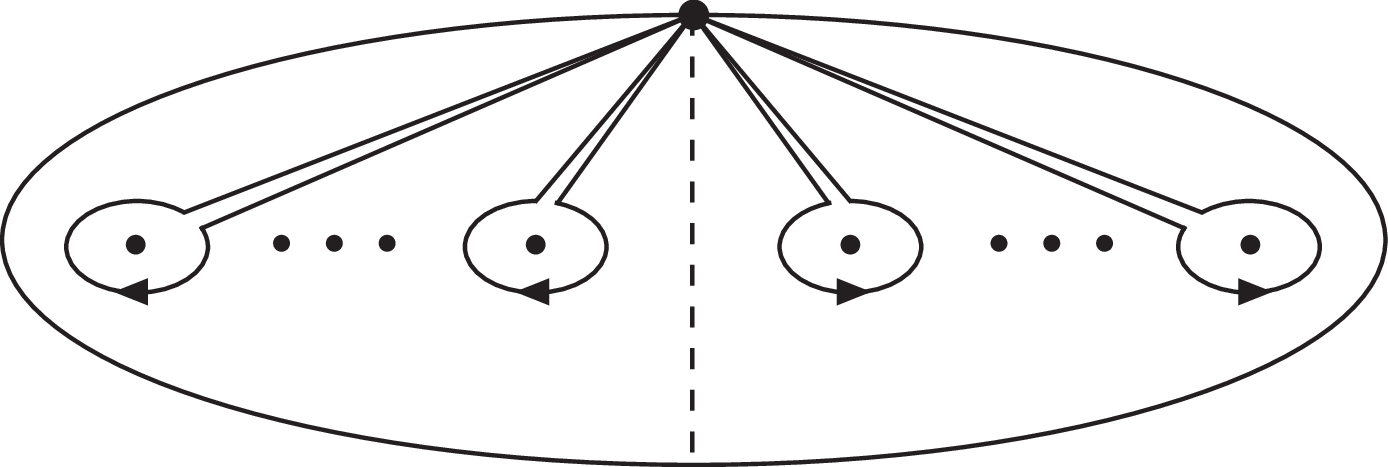}
\put(117,83){$b$} 
\put(90,15){$\overline{1}$} 
\put(21,18){$\overline{n}$}
\put(145,17){$1$} 
\put(213,20){$n$}
\end{overpic}} 
$$ 
\vspace{-1.0cm}
\caption{Generators of $\pi_1(D^2\setminus \{p_{\overline{n}},\dots,p_{\overline{1}},p_1,\dots,p_n\})$.}\label{H2-generators}
\end{figure}

\par
Habegger and Lin also proved a Markov-type theorem for the set of the link-homotopy classes of links and classified it. 
Let $\mathcal{S}(n)$ be a subgroup of $\mathcal{H}(2n)$ whose elements $s$ satisfy $s\cdot {\bm 1} = {\bm 1}$ for ${\bm 1}\in \mathcal{H}(n)$. 

\begin{theorem}[\cite{HL}] \label{Lclassify}
Let $\sigma$ and $\sigma'$ be in $\mathcal{H}(n)$. Then the closures of $\sigma$ and $\sigma'$ are link-homotopic if and only if there is an element $s\in\mathcal{S}(n)$ such that $s\cdot\sigma=\sigma'$. 
\end{theorem}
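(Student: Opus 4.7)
The plan is to prove the two directions separately. The ``if'' direction will be a geometric observation about how the action in Figure \ref{H2-action} interacts with the closure operation, while the ``only if'' direction will follow the Markov-theorem paradigm adapted to the link-homotopy setting.

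For the ``if'' direction, I will first describe the closure of $s\cdot\sigma$ in terms of the closure of $\sigma$. From Figure \ref{H2-action}, $s\cdot\sigma$ places $\sigma$ between the upper and lower parts of $s$, with the barred components of $s$ producing the outer strands that become the output. When the closing arcs are added, they can be isotoped so that the closure of $s\cdot\sigma$ is visibly obtained from the closure of $\sigma$ by inserting the pattern dictated by $s$. The hypothesis $s\cdot\mathbf{1}=\mathbf{1}$ in $\mathcal{H}(n)$ says that performing the same insertion on the trivial string link is link-homotopic to trivial, so there is an explicit sequence of self-crossing changes that trivializes the inserted pattern. Because these crossing changes involve only arcs of the inserted components, they remain valid self-crossing changes after $\mathbf{1}$ is replaced with $\sigma$, producing a link-homotopy between the closure of $s\cdot\sigma$ and the closure of $\sigma$.

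For the ``only if'' direction, I will start with a link-homotopy $H$ between the closures of $\sigma$ and $\sigma'$ and put it in general position with respect to a fixed meridional disk $D^2\times\{0\}$ used to cut the closure open into a string link. Away from finitely many critical times, $H$ meets this disk transversely in $n$ points, giving a continuous family of string link representatives; at a critical moment, the representative changes by a local move of one of three types: a Markov-type move (sliding a closing arc, corresponding to a conjugation), a Reidemeister-type move through the disk, or a self-crossing change. For each local move I will construct an explicit element of $\mathcal{H}(2n)$ that realizes the change under the action of Figure \ref{H2-action}, and verify that this element sends the trivial string link to itself, so that it lies in $\mathcal{S}(n)$. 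Composing the realizers over all critical times produces the required $s\in\mathcal{S}(n)$ with $s\cdot\sigma=\sigma'$.

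The hard part will be verifying in the ``only if'' direction that the local realizers can actually be chosen inside $\mathcal{S}(n)$ rather than merely in $\mathcal{H}(2n)$. This is precisely where link-homotopy rather than isotopy is essential: the extra flexibility of self-crossing changes is what lets us eliminate the pieces of a candidate realizer that would obstruct $s\cdot\mathbf{1}=\mathbf{1}$. Once this matching between elementary moves and elements of $\mathcal{S}(n)$ is established, the telescoping composition of realizers finishes the argument.
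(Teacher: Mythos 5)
This theorem is not proved in the paper at all: it is quoted from Habegger--Lin \cite{HL} (their Theorem 2.9), and the paper's later colored and B-graph analogues (Theorems \ref{Markov-type-for-CL} and \ref{Markov-type-for-Bgraph}) are proved ``parallel to'' that argument via the d-base/b-base formalism, i.e.\ via the bijection between link-homotopy classes of string links and of d-based links. Measured against that route, your sketch has two genuine gaps. In the ``if'' direction, you cannot simply replay the trivializing link-homotopy of $s\cdot{\bm 1}$ after replacing ${\bm 1}$ by $\sigma$: that homotopy lives in the whole cylinder, including the region where ${\bm 1}$ sits, and its moves may push an arc of the $i$-th capped component across the strand of ${\bm 1}$ of the same component. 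After substitution, the same sweep would have to cross \emph{all} strands of $\sigma$ lying in that region, which are forbidden crossings between distinct components. The standard fix is to localize the trivialization away from $\sigma$: one observes that the bottom part of $\widehat{s\cdot\sigma}$ (the closing caps composed with $s$) is, rel its endpoints, the capped copy of the flipped string link, which is $s^{-1}$ up to link-homotopy; since $\mathcal{S}(n)$ is a subgroup, $s^{-1}\cdot{\bm 1}={\bm 1}$, so this bottom tangle can be trivialized rel boundary inside a ball disjoint from $\sigma$. Your sketch is missing exactly this step.

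In the ``only if'' direction, the proposal essentially restates the theorem and flags the crux (``the local realizers can be chosen inside $\mathcal{S}(n)$'') without a mechanism. Two concrete problems arise with the general-position-with-respect-to-a-disk plan. First, during a generic link-homotopy strands pass through the cutting disk, so the number of intersection points is not constant; at such events the intermediate object is not an $n$-string link, and such a move cannot be realized by the action of $\mathcal{H}(2n)$ on $\mathcal{H}(n)$, which preserves the number of strands. One must either trade these events away or, as in \cite{HL}, avoid them altogether by carrying a d-base along the homotopy and reducing to the statement that two d-bases of the same link yield string links in the same $\mathcal{S}(n)$-orbit (this is where Lemmas 2.x--3.x of \cite{HL}, and ultimately Proposition \ref{L-kernel} on generators of $\mathcal{S}(n)$, enter). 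Second, even for the events you do list, no argument is given for membership in $\mathcal{S}(n)$ beyond an appeal to ``the flexibility of self-crossing changes''; that is precisely the content of the theorem, not a verification. As it stands the proposal is a plausible outline of the two directions but does not contain the key ideas (rel-endpoints localization via the inverse/flip for ``if'', and the d-based link correspondence for ``only if'') that make the Habegger--Lin proof work.
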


Habegger and Lin gave generators of $\mathcal{S}(n)$, which are 
$(\overline{x}_i, \overline{x}_i)_j$, $(x_i, x_i)_j$ and $(\overline{x}_i, x_i)_j$ in Figure \ref{S-generators}. The actions of $(\overline{x}_i, x_i)_j$ $(1\leq i, j\leq n, i\neq j)$ generate the \textit{conjugations} $\sigma^h=h\sigma h^{-1}$ ($h\in \mathcal{H}(n)$) of string links.  
From the decomposition ($\ref{decomposition01}$), $\sigma\in \mathcal{H}(n)$ has a form $ \sigma= \theta g$, where $\theta\in \mathcal{H}(1,\dots,\hat{i},\dots,n)$ and $g\in RF(1,\dots,\hat{i},\dots,n)$. The generators $(\overline{x}_j, \overline{x}_j)_i$ and $(x_j, x_j)_i$ acts $\sigma= \theta g$ as follows (\cite{HL}), 
$$(\overline{x}_j, \overline{x}_j)_i\cdot \theta g=\theta g^{x_j}, 
\quad (x_j, x_j)_i\cdot \theta g=\theta g^{\theta^{-1}x_j\theta}.$$
\begin{figure}[h]
\vspace{0.4cm}
$$
\raisebox{-20 pt}{\begin{overpic}[width=183pt]{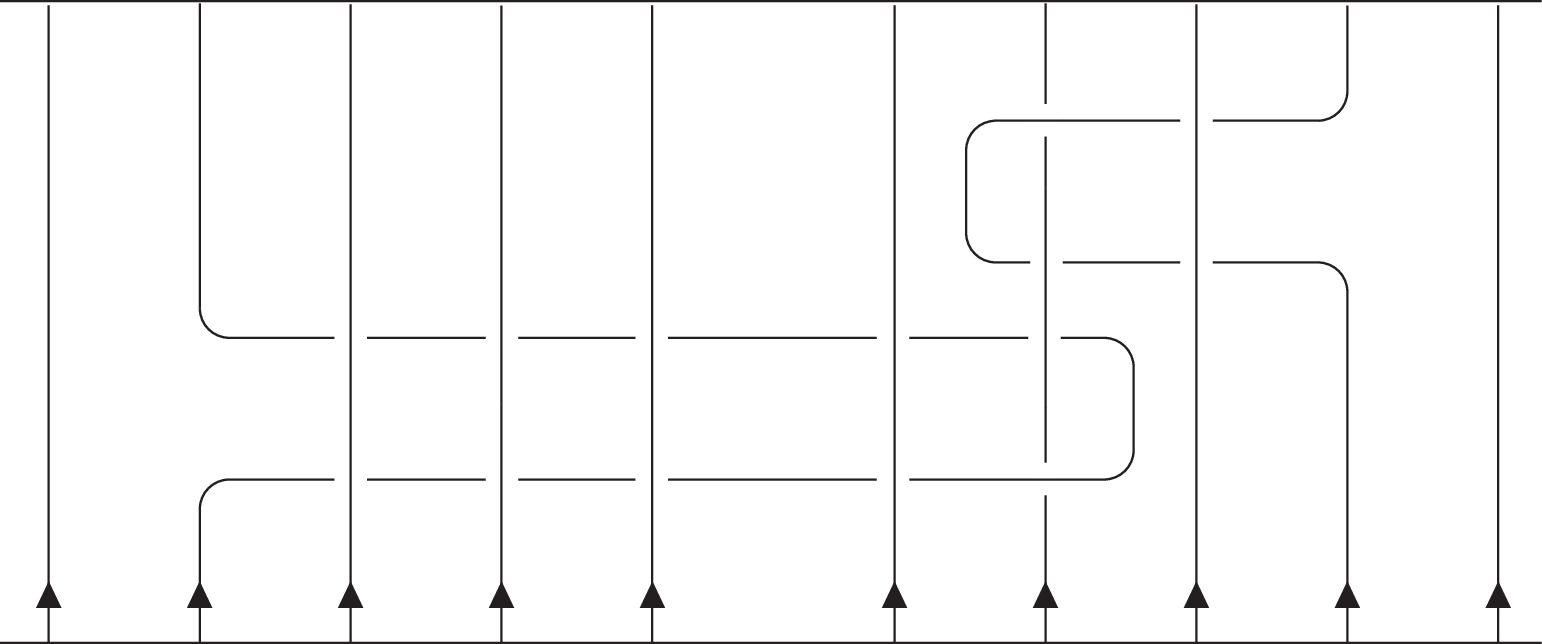}
\put(-2,-13){$\cdots$} \put(21,-14){$\overline{j}$} \put(34,-13){$\cdots$}\put(57,-14){$\overline{i}$} \put(70,-13){$\cdots$}
\put(97,-13){$\cdots$} \put(122,-14){$i$} \put(135,-13){$\cdots$} \put(156,-14){$j$} \put(170,-13){$\cdots$}
\put(74,-34){$(x_i,x_i)_j\quad (i<j)$}
\end{overpic}} 
\hspace{1.3cm}
\raisebox{-20 pt}{\begin{overpic}[width=183pt]{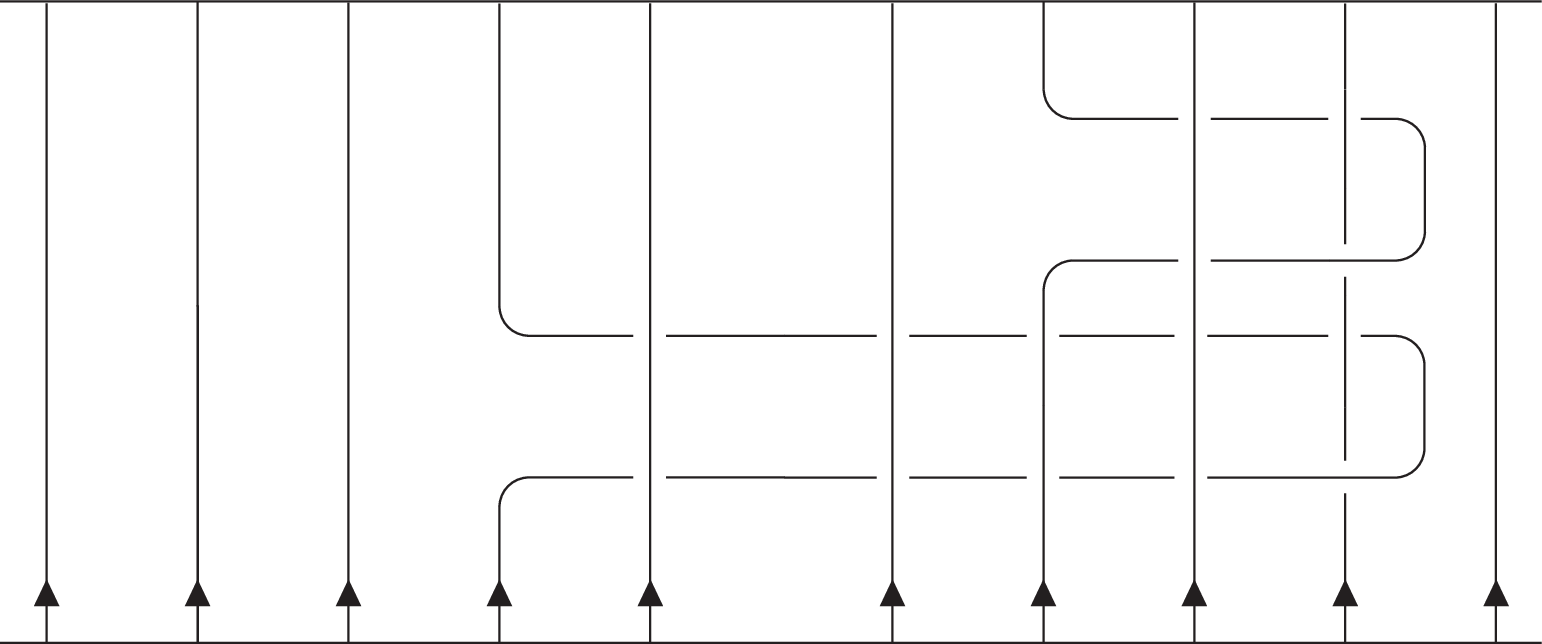}
\put(-2,-13){$\cdots$} \put(21,-14){$\overline{i}$} \put(34,-13){$\cdots$}\put(57,-14){$\overline{j}$} \put(70,-13){$\cdots$}
\put(97,-13){$\cdots$} \put(122,-14){$j$} \put(135,-13){$\cdots$} \put(156,-14){$i$} \put(170,-13){$\cdots$}
\put(74,-34){$(x_i,x_i)_j\quad (i>j)$}
\end{overpic}} 
$$
\vspace{1.8cm}
$$
\raisebox{-20 pt}{\begin{overpic}[width=183pt]{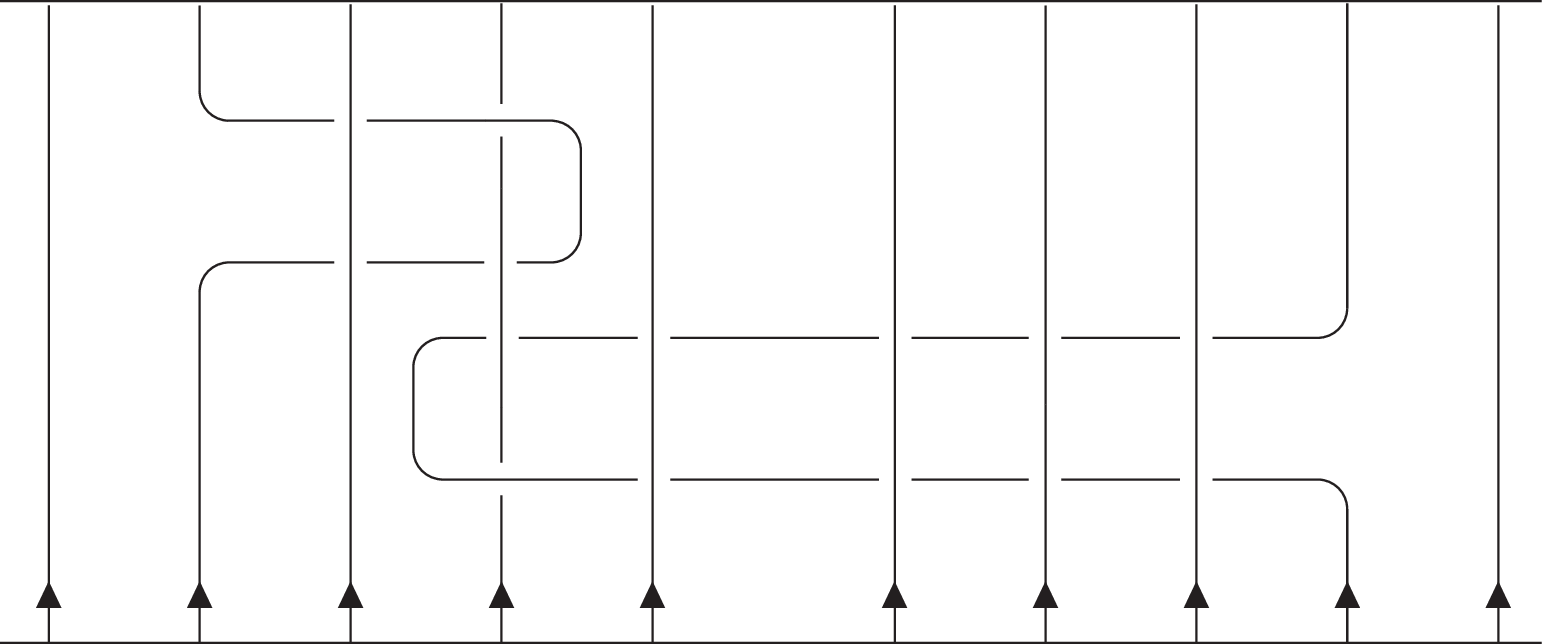}
\put(-2,-13){$\cdots$} \put(21,-14){$\overline{j}$} \put(34,-13){$\cdots$}\put(57,-14){$\overline{i}$} \put(70,-13){$\cdots$}
\put(97,-13){$\cdots$} \put(122,-14){$i$} \put(135,-13){$\cdots$} \put(156,-14){$j$} \put(170,-13){$\cdots$}
\put(74,-34){$(\overline{x}_i,\overline{x}_i)_j\quad (i<j)$}
\end{overpic}} 
\hspace{1.3cm}
\raisebox{-20 pt}{\begin{overpic}[width=183pt]{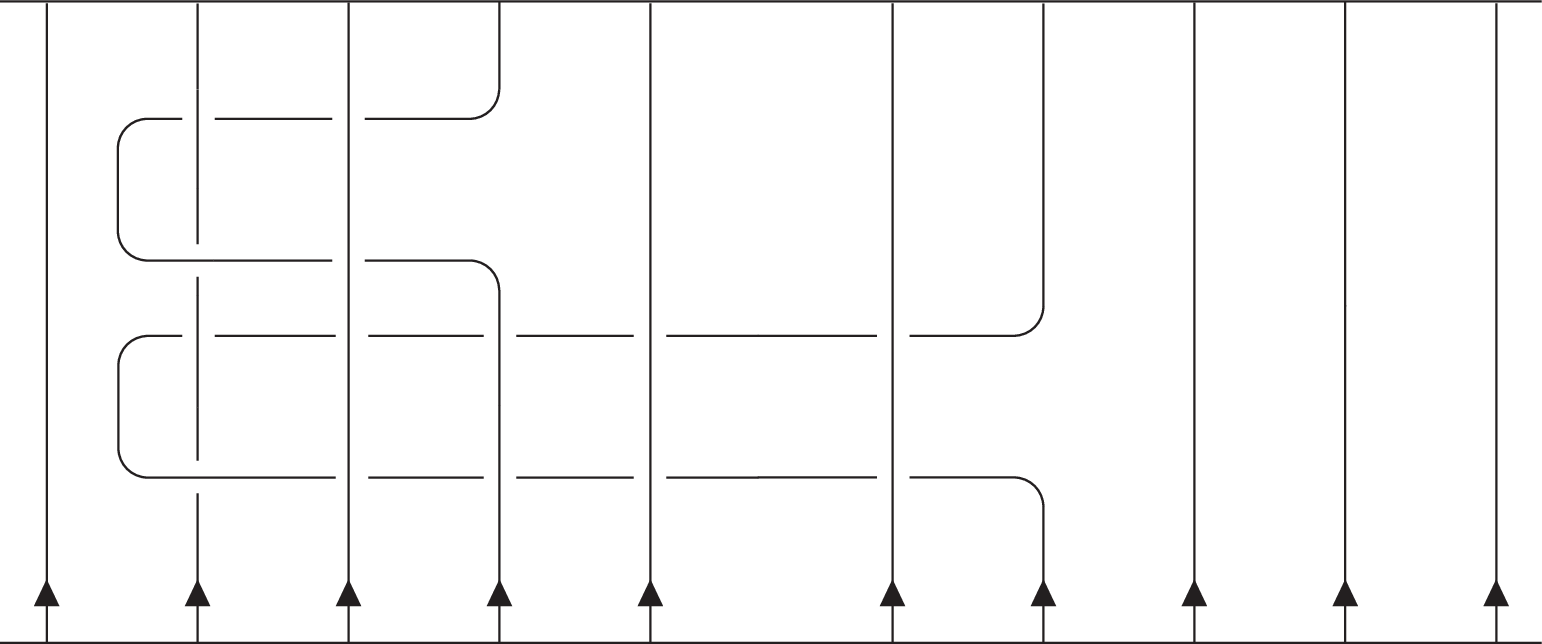}
\put(-2,-13){$\cdots$} \put(21,-14){$\overline{i}$} \put(34,-13){$\cdots$}\put(57,-14){$\overline{j}$} \put(70,-13){$\cdots$}
\put(97,-13){$\cdots$} \put(122,-14){$j$} \put(135,-13){$\cdots$} \put(156,-14){$i$} \put(170,-13){$\cdots$}
\put(74,-34){$(\overline{x}_i,\overline{x}_i)_j\quad (i>j)$}
\end{overpic}} 
$$ 
\vspace{1.8cm}
$$
\raisebox{-20 pt}{\begin{overpic}[width=183pt]{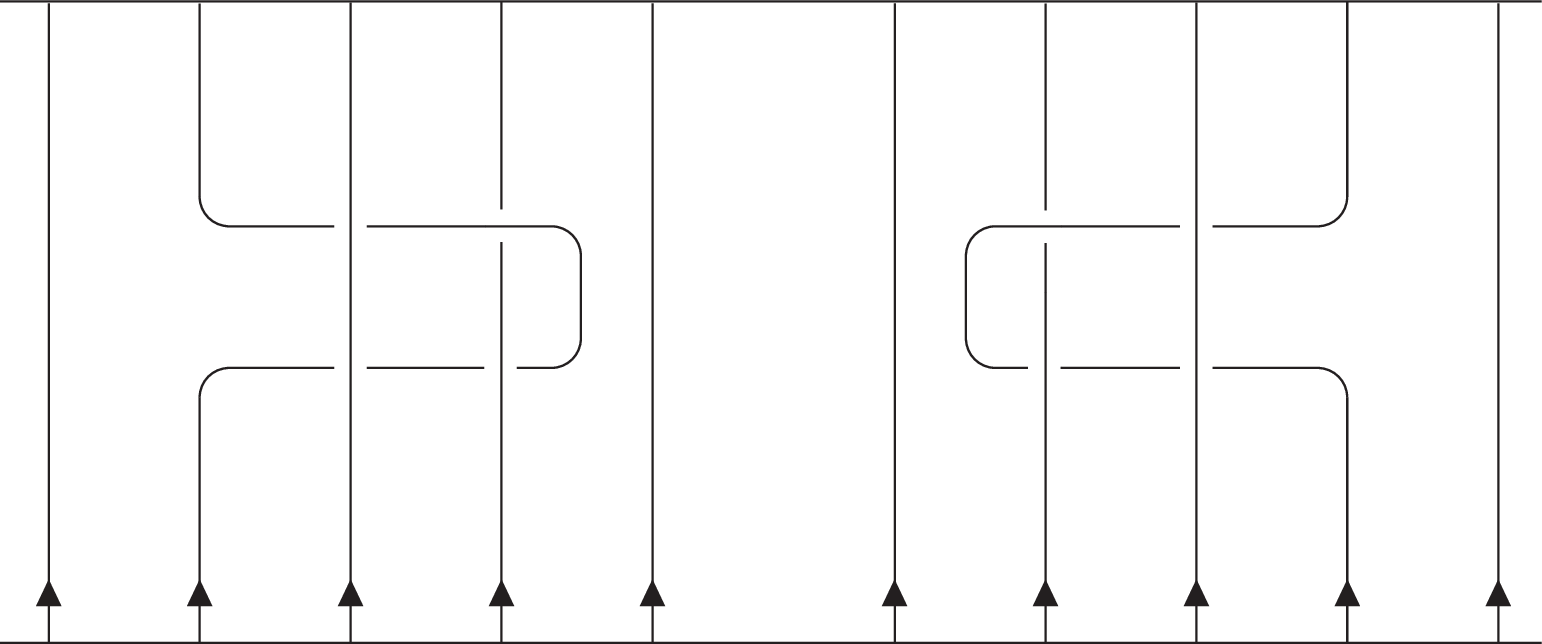}
\put(-2,-13){$\cdots$} \put(21,-14){$\overline{j}$} \put(34,-13){$\cdots$}\put(57,-14){$\overline{i}$} \put(70,-13){$\cdots$}
\put(97,-13){$\cdots$} \put(122,-14){$i$} \put(135,-13){$\cdots$} \put(156,-14){$j$} \put(170,-13){$\cdots$}
\put(74,-34){$(\overline{x}_i,x_i)_j$}
\end{overpic}}
$$ 
\vspace{1.0cm}
\caption{Generators of $\mathcal{S}(n)$.}\label{S-generators}
\end{figure}

For any $i$ $(1\leq i\leq n)$ and the above decomposition $\sigma=\theta g$, the elements $\theta g^{h}=\theta hgh^{-1}$ ($h\in RF(1,\dots,\hat{i},\dots,n)$) are called \textit{partial conjugations} of $\sigma$. Remark that $(\overline{x}_i, \overline{x}_i)_j$ $(1\leq i,j\leq n, i\neq j)$ generate the partial conjugations (including the actions of $(x_i, x_i)_j$). 
Hudges \cite{Hu} showed that $(\overline{x}_i, \overline{x}_i)_j$ also generate the conjugations. 

\begin{proposition}[\cite{HL}, \cite{Hu}] \label{L-kernel}
$\mathcal{S}(n)$ is generated by the elements $(\overline{x}_{i}, \overline{x}_{i})_{j}$.
\end{proposition}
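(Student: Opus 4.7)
The plan is to start from the three-family generating set $\{(\overline{x}_i,\overline{x}_i)_j,\,(x_i,x_i)_j,\,(\overline{x}_i,x_i)_j\}$ for $\mathcal{S}(n)$ provided by Habegger--Lin and to show that the latter two families already lie in the subgroup generated by the first. Since the $(\overline{x}_i,\overline{x}_i)_j$ are retained, it suffices to express each $(x_i,x_i)_j$ and each $(\overline{x}_i,x_i)_j$ as a word in the $(\overline{x}_k,\overline{x}_k)_l$'s inside $\mathcal{H}(2n)$.

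For $(x_j,x_j)_i$ I would exploit the action formulas quoted just above the proposition: writing $\sigma=\theta g$ with $\theta\in\mathcal{H}(1,\dots,\hat{j},\dots,n)$ and $g\in RF(1,\dots,\hat{j},\dots,n)$,
\[
(\overline{x}_j,\overline{x}_j)_i\cdot\sigma=\theta g^{x_j},\qquad (x_j,x_j)_i\cdot\sigma=\theta g^{\theta^{-1}x_j\theta}.
\]
Inducting on the word length of $\theta$ in the generators of $\mathcal{H}(1,\dots,\hat{j},\dots,n)$ and iterating the first identity realises the partial conjugation by $\theta^{-1}x_j\theta$ as a composition of partial conjugations by single generators $x_k$, each itself realised by an $(\overline{x}_k,\overline{x}_k)_l$. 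This yields an equality of induced actions on $\mathcal{H}(n)$; I would then lift it to an identity in $\mathcal{H}(2n)$ using the Habegger--Lin semidirect-product decomposition from Theorem \ref{SLclassify}, which pins down an element of $\mathcal{S}(n)$ by its effect on the relevant free factors. For the family $(\overline{x}_i,x_i)_j$, whose action gives full conjugations $\sigma\mapsto h\sigma h^{-1}$ rather than partial ones, I would invoke Hughes's theorem \cite{Hu} directly: it asserts exactly that these conjugations already lie in the subgroup generated by the $(\overline{x}_k,\overline{x}_k)_l$'s.

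The main obstacle, I expect, is the lifting step for $(x_j,x_j)_i$: passing from an identity of induced actions on $\mathcal{H}(n)$ to an identity inside $\mathcal{S}(n)\le\mathcal{H}(2n)$ requires that the action be sufficiently faithful on the free factors, which is precisely the technical content mirrored in Hughes's argument for the conjugation family. Once the two eliminations are in hand, Habegger--Lin's generating set collapses to the single family $(\overline{x}_i,\overline{x}_i)_j$ and the proposition follows.
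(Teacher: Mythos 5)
Your overall route --- start from the Habegger--Lin generating set $\{(\overline{x}_i,\overline{x}_i)_j,\ (x_i,x_i)_j,\ (\overline{x}_i,x_i)_j\}$ and eliminate the last two families, citing Hughes for the conjugations --- is exactly the justification the paper itself sketches; indeed the paper gives no proof of Proposition \ref{L-kernel} at all, quoting it from \cite{HL} and \cite{Hu}. The genuine gap is in your elimination of $(x_j,x_j)_i$. From $(x_j,x_j)_i\cdot\theta g=\theta g^{\theta^{-1}x_j\theta}$, with $\theta\in \mathcal{H}(1,\dots,\hat{i},\dots,n)$ and $g\in RF(1,\dots,\hat{i},\dots,n)$ (note the omitted index is $i$, not $j$ as you wrote), the word in the $(\overline{x}_k,\overline{x}_k)_i$ that realizes this partial conjugation is a spelling of the element $\theta^{-1}x_j\theta$ and therefore depends on $\theta$, i.e.\ on the string link $\sigma$ being acted on. So your induction does not produce an identity of induced actions on $\mathcal{H}(n)$: for each $\sigma$ you get a different element of $\langle(\overline{x}_k,\overline{x}_k)_l\rangle$ agreeing with $(x_j,x_j)_i$ on that particular $\sigma$ only. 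Consequently the ``lifting step'' has no well-defined input, and the proposed lifting mechanism is itself unsupported: Theorem \ref{SLclassify} decomposes $\mathcal{H}(n)$ (or $\mathcal{H}(2n)$) as an abstract group and says nothing about an element of $\mathcal{H}(2n)$ being determined by its action on $\mathcal{H}(n)$; every element of $\mathcal{S}(n)$ already acts trivially on ${\bm 1}$, so the faithfulness you would need is precisely what is not available without substantial extra work.

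What your argument does establish is the move-level statement: for every $\sigma$, the effect of $(x_j,x_j)_i$ on $\sigma$ is achieved by a sequence of $(\overline{x}_k,\overline{x}_k)_i$-moves. Together with Hughes's theorem --- which in the form used here is likewise a statement about moves, namely that a conjugate $h\sigma h^{-1}$ is reachable from $\sigma$ by partial conjugations --- this is all the paper ever needs, since Theorem \ref{Lclassify} and its colored and spatial-graph analogues only use the orbit equivalence generated by $\mathcal{S}(n)$. So either be content with that move-level version (your sketch plus \cite{Hu} gives it), or, for the literal group-generation claim about $\mathcal{S}(n)\le\mathcal{H}(2n)$ as stated in Proposition \ref{L-kernel}, fall back on \cite{HL} and \cite{Hu} as the paper does; the faithfulness-based lifting you propose does not close that gap.
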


\subsection{Milnor invariants} 
In \cite{HL}, Habegger and Lin also classified $\mathcal{H}(n)$ by the \textit{Milnor invariants} ${\mu}_S(I)$ for string links $S$. 
The Milnor invariants are defined as follows. A \textit{Magnus expansion} $\mathcal{F}$ is the group homomorphism 
$$\mathcal{F}:F(n)\to \langle\langle X_1,\dots,X_n\rangle\rangle_{\mathbb{Z}},$$
where $\mathcal{F}$ is defined by $x_i\mapsto 1+X_i$ and $\langle\langle X_1,\dots,X_n\rangle\rangle_{\mathbb{Z}}$ is the ring of formal power series in the non-commuting variables $X_i$ with integer coefficients. 
The $i$-th component of $S$ is represented by an element $\lambda_i$ in $RF(1,\dots,\hat{i},\dots,n)$ from the decomposition (\ref{decomposition01}). 
For a sequence $I=j_1\dots j_k i$ in $\{1,\dots,n\}$, the coefficient
$\mu_S(I)$ of the term $X_{j_1}\cdots X_{j_k}$ of $\mathcal{F}(\lambda_i)$ is well-defined and called the Milnor invariants of $S$ for $I$. The Milnor invariants ${\mu}_S(I)$ are isotopy invariants for $S$.
For a non-repeat sequence $I$, $\mu_S(I)$ is link-homotopy invariants and they classify $\mathcal{H}(n)$ \cite{HL} (see also \cite{Y, MY2}). 
\par
For the link case, we also define Milnor invariants $\overline{\mu}_L(I)$ (\cite{Mil2}) for a link $L$ in $S^3$. Let $\lambda_i$ be an element in $\pi_1(S^3\setminus L)$ which represents the preferred longitude
 of the $i$-th component of $L$ (i.e. the longitude whose linking number $0$ with the $i$-th component and whose orientation is the same as that of $i$-th component). We calculate $\mu_L(I)$ as above for the sequence  $I$ of component numbers. Let $\Delta(I)$ be a greatest common divisor of $\mu_L(J)$ for all sequence $J$ which obtained from $I$ by deleting more than or equal to one letter and permuting cyclically. Then ${\mu}_L(I)$ modulo $\Delta(I)$ is isotopy invariant for $L$ and denoted by $\overline{\mu}_L(I)$. For a non-repeat sequence $I$, $\overline{\mu}_L(I)$ is link-homotopy invariants for $L$. 
 \par
 We call the Milnor invariants $\mu_S(I)$ (resp. $\overline{\mu}_L(I)$) with non-repeat sequence $I$ \textit{Milnor homotopy invariants} for string link $S$ (resp. link $L$). The Milnor homotopy invariants classify $\mathcal{L}(n)$ for $n\leq 3$. For $n\geq 4$, it is known that there is a pair of links which are not link-homotopic and have the same link-homotopy invariants (see for example \cite{Le2, , KM2}). 

%Let $G(S)$ be the fundamental group of the compliment of $n$-component string link $S$ in the cylinder $D^2\times I$ and $N_i$ the subset generated by meridians of the $i$-th component of $S$. 
%We define a \textit{reduced group} $RG(S)$ as the quotient group $G(S)/\prod_i[N_i,N_i]$, where $[N_i,N_i]$ is the commutator subgroup generated by the commutators of elements in $N_i$. 
%The reduced group $RG(S)$ is invariants of link-homotopy classes of $S$ and is generated by loops $x_i$ $(1\leq i\leq n)$ of Figure \ref{RF-generator} in $D^2\times \{0\}$ of the cylinder (see \cite{HL}). 

\if0
\subsection{String link and the Habegger and Lin theorem}

%\subsection{Markov-type theorem for link-homotopy classes of string links}

In this section, we introduce Habegger and Lin's results in \cite{HL} for link-homotopy classes of string links.  
The string links were introduced to study the link-homotopy classification of links.
They gave a Markov-type theorem for the link-homotopy classes of links by using string links.  

Let $p_1, \cdots , p_n$ be points lying in order on the $x$-axis on the interior of the unit disk $D^2$. 
An {\it $n$-string link} $\sigma=\sigma_1 \cup \cdots \cup \sigma_n$ is a proper embedding of $n$ disjoint unit intervals $I_1, \cdots , I_n$ into $D^2 \times [0,1]$ such that for each $i=1, \cdots, n$, $\sigma_i(0)=(p_i, 0)$ and $\sigma_i(1)=(p_i, 1)$, where $\sigma_i$ is called the $i$-th string of the string link $\sigma$. 
Each string of a string link inherits an orientation from the usual orientation of the interval.
Figure \ref{stringlink} left shows an example of an $n$-component string link.
%a planar projection of a 4-string link. 
Composition of  $n$-string links is defined as follows.
Let $\sigma=\sigma_1 \cup \cdots \cup \sigma_n$ and $\sigma'=\sigma'_1 \cup  \cdots \cup \sigma'_n$ be two string links. 
Then the {\it composition}  $\sigma\sigma'= (\sigma\sigma')_1 \cup  \cdots \cup (\sigma\sigma')_n$ of  $\sigma$ and  $\sigma'$ is the string link defined by $(\sigma\sigma')_i =h_1(\sigma_i) \cup h_2(\sigma'_i)$ for each $i=1, \cdots, n$, where $h_1,h_2: D^2 \times [0,1] \rightarrow D^2 \times [0,1]$ are embeddings defined by 
\[ h_1(p,t)=(p,\frac{1}{2}t) \text{ and } h_2(p,t)=(p,\frac{1}{2}+\frac{1}{2}t)  \]
for any $p \in D^2$ and $t \in [0,1]$ (see Figure \ref{stringlink} right). 
The {\it trivial} $n$-string link $1_n$ consists of $\sigma (I_i)={p_i} \times [0,1]$ for each $i=1, \cdots, n$.  
It is known that for fixed $n$, the set of link-homotopy classes of $n$-string links forms a group with multiplication induced by the composition of string links and the link-homotopy class $[1_n]$ of $1_n$ as the identity.  
We denote it by $\mathcal{H}(n)$.
Here the link-homotopy on string links is generated by ambient isotopies relative to endpoints of each strings and self-crossing changes. 

\begin{figure}[h]
$$
\raisebox{-22 pt}{\begin{overpic}[width=170pt]{string-link-example02-2-4.pdf}
\put(81,59){$b$} 
\put(23,12){$x_1$} \put(54,12){$x_2$} 
\put(86,12){$x_3$} \put(135,12){$x_n$}
\put(24,157){$p_1$} \put(55,157){$p_2$} 
\put(87,157){$p_3$} \put(136,157){$p_n$} 
\put(-43,173){$D^2\times [0,1]$} 
\end{overpic}} 
\hspace{2.0cm}
\raisebox{-0 pt}{\begin{overpic}[width=130pt]{string-link-prod.pdf}
\put(62,45){\Large $\sigma$} \put(62,103){\Large $\sigma'$}
\put(14,-13){$1$} \put(43,-13){$2$} 
\put(113,-13){$n$} 
\end{overpic}} 
$$ 
\vspace{0.0cm}
\caption{An $n$-string link and a composition.}\label{stringlink}
\end{figure}

Let $F(n)$ be a free group generated by $x_1, \cdots , x_n$ corresponding to the loops in Figure \ref{stringlink} left and $RF(n)$ the quotient group of $F(n)$ obtained by adding relations that each $x_i$ commutes with all of its conjugations.
It is known that for each $i$, any $\sigma \in \mathcal{H}(n)$ can be decomposed as $\theta_i g_i$ corresponding to the decomposition $\mathcal{H}(n) = \mathcal{H}(n-1) \ltimes RF_i(n-1)$ obtained from the map $\mathcal{H}(n) \rightarrow \mathcal{H}(n-1)$ given by omission of the $i$-th string, where $RF_i(n-1)$ is generated by the generators except for $x_i$. Figure \ref{decomposition} is an example. 
%gの決め方は??
\begin{figure}[h]
$$
\raisebox{-20 pt}{\begin{overpic}[width=100pt]{string-link-decomposition05.eps}
\put(103,98){\large $g_3=x_2^{-1}x_1x_4$} \put(40,41){\large $\theta_3$}
\put(10,-13){$1$} \put(35,-13){$2$} 
\put(60,-13){$3$} \put(84,-13){$4$} 
\end{overpic}} 
$$ 
\vspace{0.0cm}
\caption{A decomposition.}\label{decomposition}
\end{figure}

\begin{definition}
Let $\sigma \in \mathcal{H}(n)$ be decomposed as $\theta_i g_i$.
A {\it partial conjugation} of $\sigma$ is an element of $\mathcal{H}(n)$ of the form $\theta_i g_i^h$ with $h \in RF_i(n-1)$, where $g_i^h=hg_ih^{-1}$. 
\end{definition}

Habegger and Lin gave the following Markov-type theorem for $\mathcal{H}(n)$, which is an improved version by Hughes's claim \cite{H} that the partial conjugations generate the conjugations.

\begin{theorem}[\cite{HL}, \cite{H}]\label{Markov-type}
Let $\sigma$ and $\sigma'$ be in $\mathcal{H}(n)$. Then the closures of $\sigma$ and $\sigma'$ are link-homotopic if and only if there is a finite sequence $\sigma=\sigma_0, \sigma_1, \cdots, \sigma_m=\sigma'$ 
of elements of $\mathcal{H}(n)$ such that $\sigma_{k}$ is a partial conjugation of $\sigma_{k-1}$ for each $k=1, \cdots, m$. 
\end{theorem}

The partial conjugation is induced by the following group action on $\mathcal{H}(n)$.
The action $\Sigma \cdot \sigma$ of $\Sigma \in \mathcal{H}(2n)$ on $\sigma \in \mathcal{H}(n)$ is defined as illustrated in Figure \ref{action}, where the components of $\Sigma$ are named from left to right by $\overline{n}, \dots, \overline{1}, 1, \dots, n$.
Here we forget the orientation of $\Sigma$ once and give an orientation compatible with that of $\sigma$ and identify $\overline{i}$ to $i$. 
We also choose generators of $\mathcal{H}(2n)$ as the loops in $D\times\{0\}$ in Figure \ref{H2-generators} so that they are compatible with those of $\mathcal{H}(n)$. 

\begin{figure}[h]
$$
\raisebox{10 pt}{\begin{overpic}[width=180pt]{H2-elements-01-4.eps}
\put(84,30){\large$\Sigma$}
\put(75,-13){$\overline{1}$} \put(57,-13){$\overline{2}$} 
\put(25,-11){$\cdots$} \put(2,-11){$\overline{n}$}
\put(98,-11){$1$} \put(116,-12){$2$} 
\put(139,-11){$\cdots$} \put(171,-10){$n$}
\end{overpic}} 
\hspace{2.0cm}
\raisebox{-20 pt}{\begin{overpic}[width=180pt]{H2-elements-01-3.eps}
\put(133,83){\large$\sigma$} \put(84,24){\large$\Sigma$}
\put(98,-12){$1$} \put(130,-11){$\cdots$} \put(169,-11){$n$}
\end{overpic}} 
$$ 
\vspace{0.1cm}
\caption{An action of $\mathcal{H}(2n)$ on $\mathcal{H}(n)$.}\label{action}
\end{figure}

\begin{figure}[h]
$$
\raisebox{10 pt}{\begin{overpic}[width=240pt]{string-link-example02-2-5.eps}
\put(117,83){$b$} 
\put(89,15){$\overline{1}$} 
%\put(57,-13){$\overline{2}$} \put(25,30){$\cdots$} 
\put(20,19){$\overline{n}$}
\put(144,17){$1$} 
%\put(116,-12){$2$} \put(150,-11){$\cdots$} 
\put(213,20){$n$}
\end{overpic}} 
$$ 
\vspace{-1.0cm}
\caption{Generators of $\mathcal{H}(2n)$}\label{H2-generators}
\end{figure}

Let $(\overline{x}_i,\overline{x}_i)_j$ ($1 \leq i \neq j \leq n$) be a $2n$-string link as illustrated in Figure \ref{partialconj}. The left figure is the case $i<j$ and the right figure is the case $j<i$.

\begin{figure}[h]
\vspace{0.4cm}
$$
\raisebox{-20 pt}{\begin{overpic}[width=183pt]{H2-elements-05.eps}
\put(-2,-13){$\cdots$} \put(21,-14){$\overline{j}$} \put(34,-13){$\cdots$}\put(57,-14){$\overline{i}$} \put(70,-13){$\cdots$}
\put(97,-13){$\cdots$} \put(121,-14){$i$} \put(135,-13){$\cdots$} \put(157,-14){$j$} \put(170,-13){$\cdots$}
\end{overpic}} 
\hspace{1.3cm}
\raisebox{-20 pt}{\begin{overpic}[width=183pt]{H2-elements-07.eps}
\put(-2,-13){$\cdots$} \put(21,-14){$\overline{i}$} \put(34,-13){$\cdots$}\put(57,-14){$\overline{j}$} \put(70,-13){$\cdots$}
\put(97,-13){$\cdots$} \put(121,-14){$j$} \put(135,-13){$\cdots$} \put(157,-14){$i$} \put(170,-13){$\cdots$}
\end{overpic}} 
$$ 
\vspace{0.0cm}
%\caption{Generators $(\overline{x}_j,\overline{x}_j)_i$ and $(\overline{x}_i,\overline{x}_i)_j$ of partial conjugations.}\label{partialconj}
\caption{Generators $(\overline{x}_i,\overline{x}_i)_j$ of partial conjugations.
}\label{partialconj}
\end{figure}

The action of a subgroup generated by $(\overline{x}_i, \overline{x}_i)_j$ ($1 \leq i \neq j \leq n$) on $\mathcal{H}(n)$ induces the partial conjugations.
In fact, for a decomposed string link $\theta_j g_j$, we have $(\overline{x}_i,\overline{x}_i)_j \cdot \theta_j g_j  = \theta_j g_j^{x_i}$. Therefore for any $(\overline{x}_i,\overline{x}_i)_j$ and string link $\sigma$, the action give a partial conjugation of $\sigma$ and it is clear that any partial conjugation $\theta_j g_j^h$ of $\theta_j g_j$ is induced by $(\overline{x}_i,\overline{x}_i)_j$ for $i=1, \cdots j-1, j+1, \cdots, n$.
For the use in Section \ref{test}, we also introduce generators $(\overline{x}_i,x_i)_j$ of conjugations in $\mathcal{H}(2n)$ as illustrated in Figure \ref{conjugation}.  

%[[conjを載せるなら文での説明が必要]]
\begin{figure}[h]
$$
\raisebox{-20 pt}{\begin{overpic}[width=183pt]{H2-elements-03.eps}
\put(-2,-13){$\cdots$} \put(21,-14){$\overline{j}$} \put(34,-13){$\cdots$}\put(57,-14){$\overline{i}$} \put(70,-13){$\cdots$}
\put(98,-13){$\cdots$} \put(121,-14){$i$} \put(135,-13){$\cdots$} \put(157,-14){$j$} \put(170,-13){$\cdots$}
\end{overpic}}
$$ 
\vspace{0.0cm}
\caption{A generator $(\overline{x}_i,x_i)_j$ of conjugations.}\label{conjugation}
\end{figure}
\fi

\section{Classifications} \label{classifications}

\subsection{A classification of the link-homotopy classes of colored string links}
We take an analogue of the Habegger-Lin theory for colored string links. An example of a colored string link is in Figure \ref{H(S)-generator}, where the  $(i,j)$-th component connects points $p_{ij}$ in $D^2\times \{0\}$ and $D^2\times \{1\}$. 

\par
Let $\mathcal{CH}({\bm l})$ be the set of the CL-homotopy classes of colored string links with $m$ colors and a component decomposition ${\bm l}=(1_{l_1},\dots,m_{l_m})$, which has obviously the group structure.  
%Since $\mathcal{CH}({\bm l})$ is a quotient group of the link-homotopy classes $\mathcal{H}(\sum_{j=1}^m l_j)$ of $(\sum_{j=1}^m l_j)$-component string links. 
%$\mathcal{H}({\bm l})$ is generated by $x_{ij}$ ($1\leq i\leq m, 1\leq j\leq l_i$) which are the loops in Figure \ref{H(S)-generator}.
%For a colored string link $S$ with $m$ colors, we denote the CL-homotopy class of $S$ by $\mathcal{H}(S)$.

Let $F({\bm l})=F(1_{l_1},\dots,m_{l_m})$ be a free group generated by $x_{ij}$ ($1\leq i\leq k, 1\leq j \leq l_i$). 
%We simply denote $CF(a_1,\dots,a_k)$ by $CF(k)$. 
%We call $CF(1_{l_1},\dots,m_{l_m})$ a \textit{colored free group}. 
Let $RCF({\bm l})$ be $F({\bm l})$ modulo relations such that conjugations of $x_{ij}$ and $x_{st}$ commute if $i=s$, i.e.  $x_{ij}^gx_{it}^h=x_{it}^hx_{ij}^g$ for $g, h\in F({\bm l})$, where $x_{ij}^g=gx_{ij}g^{-1}$ and $x_{it}^h=hx_{it}h^{-1}$. 
We call $RCF({\bm l})$ a \textit{reduced colored free group} of $F({\bm l})$. As an analogue of Theorem \ref{SLclassify}, we have a decomposition theorem for $\mathcal{CH}({\bm l})$.  

%Let be the fundamental group $\pi_1(D\times I\setminus S)$ of the compliment of a colored string link $S$. 
%For ${\bm l}=(l_1,\dots, l_m)$ and a set $J=\{i_1, \dots, i_k\}\subset Col(m)$ of colors, 
%let ${\bm l}_{\overline{J}}$ be a sequence obtained by omitting $i_j\in J$ from ${\bm l}$.

\begin{figure}[h]
$$
\raisebox{-22 pt}{\begin{overpic}[width=220pt]{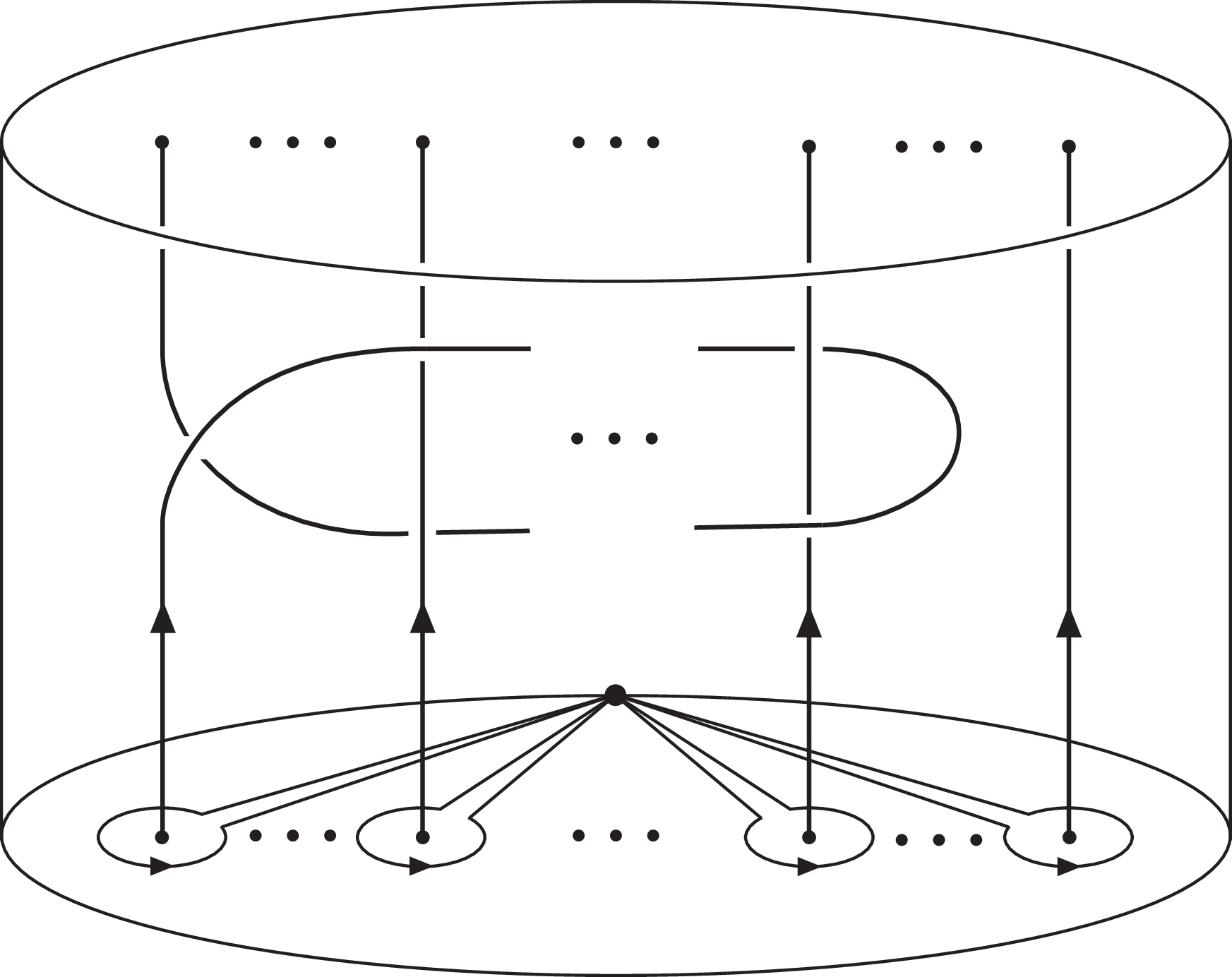}
\put(107,55){$b$} 
\put(23,11){$x_{11}$} \put(66,11){$x_{1l_1}$} 
\put(135,11){$x_{m1}$} \put(173,13){$x_{ml_m}$}
\put(24,157){$p_{11}$} \put(69,157){$p_{1l_1}$} 
\put(137,157){$p_{m1}$} \put(177,157){$p_{ml_m}$} 
\put(-43,173){$D^2\times [0,1]$} 
\end{overpic}} 
$$ 
\vspace{0.0cm}
\caption{A colored string link and generators of $\pi_1(D^2\setminus\{p_{11},\dots,p_{ml_m}\})$.}\label{H(S)-generator}
\end{figure}

\begin{theorem} \label{CSLclassify} 
For the group $\mathcal{CH}({\bm l})$ of CL-homotopy classes of colored string links with $m$ colors and a component decomposition ${\bm l}=(1_{l_1},\dots,m_{l_m})$, there is a split short exact sequence 
$$1\to RCF({\bm l}_{m-1})^{l_m}\overset{}{\to} \mathcal{CH}({\bm l})\to \mathcal{CH}({\bm l}_{m-1})\to 1,$$
where ${\bm l}_i=(1_{l_1},\dots,i_{l_i})$. Then, 
$\mathcal{CH}({\bm l})$ is decomposed to a semidirect product as follows.
\begin{equation}
\mathcal{CH}({\bm l})=\mathcal{CH}({\bm l}_{m-1})\ltimes RCF({\bm l}_{m-1})^{l_m}. \label{CLdecomposition}
\end{equation}
%$$\mathcal{H}({\bm l})=\mathcal{H}(l_1,\dots,l_{i-1},l_{i+1},\dots,l_m)\ltimes RCF(l_1,\dots,l_{i-1},l_{i+1},\dots,l_m)^{l_i}.$$
\end{theorem}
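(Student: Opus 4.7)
The plan is to follow the strategy of Habegger--Lin's proof of Theorem \ref{SLclassify}, with the reduced free group $RF(n-1)$ replaced by the reduced colored free group $RCF({\bm l}_{m-1})$ and an extra direct product over the $l_m$ independent strands of color $m$. The projection $p : \mathcal{CH}({\bm l}) \to \mathcal{CH}({\bm l}_{m-1})$ forgets the $l_m$ components of color $m$; it is well-defined on CL-homotopy classes since same-colored crossing changes and ambient isotopies of the full string link restrict to admissible moves on the sub-string link. The section $s : \mathcal{CH}({\bm l}_{m-1}) \to \mathcal{CH}({\bm l})$ adjoins $l_m$ trivial vertical strands colored $m$ at the endpoints $p_{m1},\ldots,p_{ml_m}$. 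Both maps are group homomorphisms under stacking composition and $p \circ s = \mathrm{id}$, so the sequence will split once $\ker(p)$ is identified with $RCF({\bm l}_{m-1})^{l_m}$.

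The core geometric step is an \emph{untying lemma}: every class in $\ker(p)$ is represented by a colored string link whose first $l_1 + \cdots + l_{m-1}$ strands are the standard vertical strands. Given $[\sigma] \in \ker(p)$, the sub-string link $p(\sigma)$ is CL-trivial in $\mathcal{CH}({\bm l}_{m-1})$, so there is a finite sequence of same-color crossing changes and ambient isotopies trivializing it; each such move extends to a CL-homotopy of the full $\sigma$ in $D^2 \times [0,1]$ by carrying the color-$m$ strands along (ambient isotopies extend globally, and the same-color crossing changes among colors $<m$ can be supported in small balls disjoint from the color-$m$ strands after a further isotopy). For such an untied representative, the $k$-th color-$m$ strand has a longitude $\lambda_k \in \pi_1\bigl(D^2 \setminus \{p_{ij}\}_{i<m}\bigr) = F({\bm l}_{m-1})$, giving a candidate inverse assignment $[\sigma] \mapsto (\lambda_1,\ldots,\lambda_{l_m})$.

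To see this assignment descends to $RCF({\bm l}_{m-1})^{l_m}$, I analyze the effect of each allowed CL-move on the longitudes. Self-crossings of the color-$m$ strand $k$ modify $\lambda_k$ by Milnor-type commutators of conjugates of a single generator $x_{ij}$, yielding the relations $[x_{ij}^g, x_{ij}^h]=1$; crossings between two distinct color-$m$ strands show that different $\lambda_k$ move independently, accounting for the direct-product structure; and same-color crossings among the first $m-1$ colors between $(i,j)$ and $(i,t)$ can be localized as a commutator acting on $\lambda_k$ to yield precisely the cross-index relations $[x_{ij}^g, x_{it}^h]=1$ that distinguish $RCF({\bm l}_{m-1})$ from the ordinary reduced free group. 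Conversely, $\iota : RCF({\bm l}_{m-1})^{l_m} \to \ker(p)$ is defined by realizing each word $w_k$ as a braid pattern on the $k$-th color-$m$ strand over trivial first $m-1$ color strands, and the two constructions are mutually inverse on representatives.

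The main obstacle will be establishing injectivity of $\iota$: two untied representatives with the same tuple of longitudes in $RCF({\bm l}_{m-1})^{l_m}$ must be shown to be CL-homotopic through an explicit sequence of moves, rather than merely to have matching invariants. This is the colored analog of the key technical step in \cite{HL}, and I would handle it either by invoking the colored Milnor invariants $\mu_\sigma(I)$ defined in the subsequent section as a complete set of CL-homotopy invariants on the kernel, or by a direct induction on word length that realizes each defining $RCF$ relation in the kernel through a concrete CL-homotopy of string links.
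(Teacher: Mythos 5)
The paper itself gives no proof of Theorem \ref{CSLclassify}: it is asserted as the direct analogue of Theorem \ref{SLclassify}, with only the three maps of the sequence described. Your outline (projection by deleting the color-$m$ strands, section by adding trivial strands, an untying lemma for kernel elements, and identification of the kernel via longitudes) is exactly the intended Habegger--Lin-style argument, so the route matches the authors' intent. Two points, however, need repair. First, a misattribution: a self-crossing change of the $k$-th color-$m$ strand does \emph{not} modify $\lambda_k$ by commutators of conjugates of $x_{ij}$ --- it is a homotopy of that string in the complement of the color-$<m$ strands and leaves $\lambda_k$ unchanged. The relations $[x_{ij}^g,x_{ij}^h]=1$, like the cross relations $[x_{ij}^g,x_{it}^h]=1$, arise from crossing changes among the color-$i$ strands with $i<m$ (including self-crossings of the strand $(i,j)$) performed during a CL-homotopy that moves the lower-colored part away from and back to the trivial position. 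This is fixable, but as written the relations are derived from moves that in fact act trivially.

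The genuine gap is the exactness on the left, i.e.\ injectivity of $\iota:RCF({\bm l}_{m-1})^{l_m}\to\mathcal{CH}({\bm l})$, equivalently the well-definedness of the longitude tuple on CL-homotopy classes even when the homotopy passes through representatives whose color-$<m$ part is nontrivial. Your second fallback --- realizing each defining $RCF$ relation by a concrete CL-homotopy --- proves the opposite direction (that $\iota$ is well defined and hits every kernel class), not that no further collapsing occurs. Your first fallback --- invoking the colored Milnor invariants $\mu_\sigma(I)$ as a complete set of invariants on the kernel --- carries two unaddressed burdens: (i) in this paper the string-link Milnor invariants are defined by reading the longitude $\lambda_i$ off the very decomposition being proved, so to avoid circularity you must define them independently, e.g.\ via the uncolored invariants $\mu_S(I)$ or via the colored Milnor group $G_\sigma/\prod_i(A_i)_2$ used in the propositions of Section 3.3; and (ii) you need the separation statement that invariants with non-repeating colors distinguish distinct elements of $RCF({\bm l}_{m-1})$, i.e.\ injectivity of the colored analogue of the reduced Magnus expansion (equivalently, that the meridians induce an isomorphism $RCF({\bm l})\cong G_\sigma/\prod_i(A_i)_2$ for colored string links, the colored counterpart of Habegger--Lin's use of Stallings' theorem). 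That separation lemma is the substantive content of the theorem, and neither proving it nor citing a proof leaves the argument incomplete at its key step.
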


In Theorem \ref{CSLclassify}, the homomorphism $\mathcal{CH}({\bm l})\to \mathcal{CH}({\bm l}_{m-1})$ is defined by omitting the sub-string link $\sigma_m$, the split homomorphism $\mathcal{CH}({\bm l}_{m-1})\to \mathcal{CH}({\bm l})$ adds the trivial $l_m$ strings as the $m$-th sub-string link to a colored string link with $m-1$ colors and $RCF({\bm l}_{m-1})^{l_m}\overset{}{\to} \mathcal{CH}({\bm l})$ is defined by adding $l_m$ strings as the $m$-th sub-string link which are determined by an element of $RCF({\bm l}_{m-1})^{l_m}$ to the trivial colored string link with the component decomposition ${\bm l}_{m-1}$.

\begin{remark}
By using the above decomposition recursively, we have
$$\mathcal{CH}({\bm l})=(\cdots(\,RCF({\bm l}_{1})^{l_2}\ltimes RCF({\bm l}_{2})^{l_3}\,)\ltimes \cdots)\ltimes RCF({\bm l}_{m-1})^{l_m}.$$
For a permutation $i_1\cdots i_m$ of $1\cdots m$, similarly we have a decomposition: 
$$\mathcal{CH}({\bm l})=(\cdots(RCF(l_{i_1})^{l_{i_2}}\ltimes RCF(l_{i_1},l_{i_2})^{l_{i_3}})\cdots)\ltimes RCF(l_{i_1},\dots,l_{i_{m-1}})^{l_{i_m}}.$$
Here $RCF(l_{i_1},\dots,l_{i_k})$ is generated by $x_{st}$ ($s\in \{i_1,\dots,i_k\}, 1\leq t\leq l_{s}$). 
\end{remark}

%\subsection{Actions for $\mathcal{CH}({\bm l})$}
\subsection{A classification of the link-homotopy classes of colored links}
For ${\bm l}=(1_{l_1},\dots,m_{l_m})$, put $\overline{\bm l}{\bm l}=(\overline{m}_{l_{\overline{m}}},\dots,\overline{1}_{l_{\overline{1}}},1_{l_1},\dots,m_{l_m})$, where $l_i=l_{\overline{i}}$. We sometimes abbreviate a component number $(i,j)$ to $ij$ and  $(\overline{i},j)$ to $\overline{ij}$. 
For $\Sigma\in$ $\mathcal{CH}(\overline{\bm l}{\bm l})$, we define a group  action of $\Sigma$ to $\sigma\in \mathcal{CH}({\bm l})$ as a composition $\Sigma\cdot\sigma$ as in Figure \ref{CLaction}. 
Here we forget the orientation of $\Sigma$ once and give the orientation compatible with that of $\sigma$ and identify $\overline{ij}$ to $ij$. 
We also choose generators of $\pi_1(D^2\setminus\{p_{\overline{ml_m}},\dots, p_{\overline{11}}, p_{11},\dots,p_{ml_m}\})$ as the loops in Figure \ref{CLH2-generators}. 
%so that they are compatible with those of $\mathcal{H}(l_1, \dots,l_m)$. 

\begin{figure}[h]
$$
\raisebox{10 pt}{\begin{overpic}[width=180pt]{H2-elements-01-4.eps}
\put(84,30){\large$\Sigma$}
\put(73,-13){$\overline{11}$} \put(53,-13){$\overline{12}$} 
\put(26,-11){$\cdots$} \put(-6,-14){$\overline{ml_m}$}
\put(95,-11){$11$} \put(114,-12){$12$} 
\put(139,-11){$\cdots$} \put(167,-11){$ml_m$}
\end{overpic}} 
\hspace{2.0cm}
\raisebox{-20 pt}{\begin{overpic}[width=180pt]{H2-elements-01-3.eps}
\put(133,83){\large$\sigma$} \put(84,24){\large$\Sigma$}
\put(96,-12){$11$} \put(130,-11){$\cdots$} \put(165,-11){$ml_m$}
\end{overpic}} 
$$ 
\vspace{0.1cm}
\caption{A group action of $\mathcal{CH}(\overline{\bm l}{\bm l})$ on $\mathcal{CH}({\bm l})$.}\label{CLaction}
\end{figure}

\begin{figure}[h]
$$
\raisebox{10 pt}{\begin{overpic}[width=240pt]{string-link-example02-2-5.eps}
\put(117,83){$b$} 
\put(87,15){$\overline{11}$} 
\put(28,16){$\overline{ml_m}$}
\put(141,17){$11$} 
\put(197,20){$ml_m$}
\end{overpic}} 
$$ 
\vspace{-1.0cm}
\caption{Generators of $\pi_1(D^2\setminus\{p_{\overline{ml_m}},\dots, p_{\overline{11}}, p_{11},\dots,p_{ml_m}\})$}\label{CLH2-generators}
\end{figure}

Let $\mathcal{S}_{CL}({\bm l})$ be a subset of $\mathcal{CH}(\overline{\bm l}{\bm l})$ whose elements $s$ satisfy $s\cdot {\bm 1}={\bm 1}$ for ${\bm 1} \in \mathcal{CH}({\bm l})$ up to CL-homotopy. Then $\mathcal{S}_{CL}({\bm l})$ is a subgroup of $\mathcal{CH}(\overline{\bm l}{\bm l})$. 
%Consider an element $(x_{st}, x_{st})_{ij}$ (resp.
$(\overline{x}_{st}, \overline{x}_{st})_{ij}$) 
%of $\mathcal{S}_{CL}({\bm l})$ which is shown in Figure \ref{CLpartialconj} (resp. Figure \ref{barCLpartialconj}). 
%We also consider an 
%$(\overline{x}_{st}, x_{st})_{ij} \in \mathcal{S}_{CL}({\bm l})$ in Figure \ref{CLconjugation}. 
%We also consider an 
%$(\overline{x}_{st}, x_{st})_{ij} \in \mathcal{S}_{CL}({\bm l})$ in Figure \ref{CLconjugation}. 
Consider an element $(x_{st}, x_{st})_{ij}$, 
$(\overline{x}_{st}, \overline{x}_{st})_{ij}$ and $(\overline{x}_{st}, x_{st})_{ij} \in \mathcal{S}_{CL}({\bm l})$ which are shown in Figure \ref{CLpartialconj}, \ref{barCLpartialconj} and \ref{CLconjugation}, respectively.
\par

\begin{figure}[h]
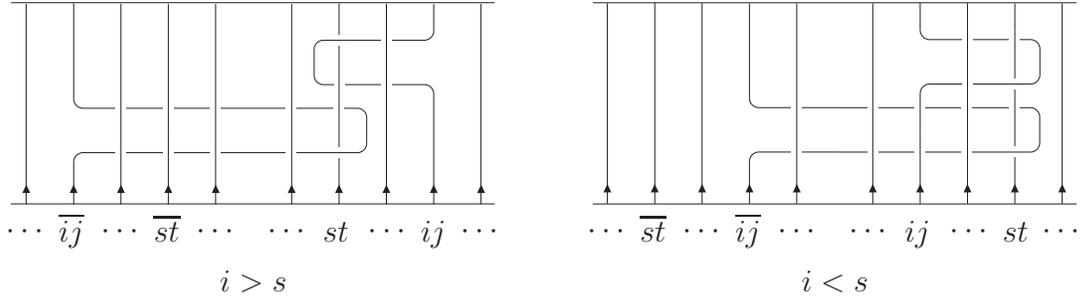

\vspace{0.4cm}
$$
\raisebox{-20 pt}{\begin{overpic}[width=183pt]{H2-elements-06.eps}
\put(-2,-13){$\cdots$} \put(18,-14){$\overline{ij}$} \put(34,-13){$\cdots$}\put(54,-14){$\overline{st}$} \put(70,-13){$\cdots$}
\put(97,-13){$\cdots$} \put(118,-14){$st$} \put(135,-13){$\cdots$} \put(155,-14){$ij$} \put(170,-13){$\cdots$}
\put(79,-33){$i>s$}
\end{overpic}} 
\hspace{1.3cm}
\raisebox{-20 pt}{\begin{overpic}[width=183pt]{H2-elements-08.eps}
\put(-2,-13){$\cdots$} \put(18,-14){$\overline{st}$} \put(34,-13){$\cdots$}\put(54,-14){$\overline{ij}$} \put(70,-13){$\cdots$}
\put(97,-13){$\cdots$} \put(118,-14){$ij$} \put(135,-13){$\cdots$} \put(155,-14){$st$} \put(170,-13){$\cdots$}
\put(79,-33){$i<s$}
\end{overpic}} 
$$
\vspace{0.8cm}
\caption{$(x_{st}, x_{st})_{ij}$.
}\label{CLpartialconj}
\end{figure}

\begin{figure}[h]
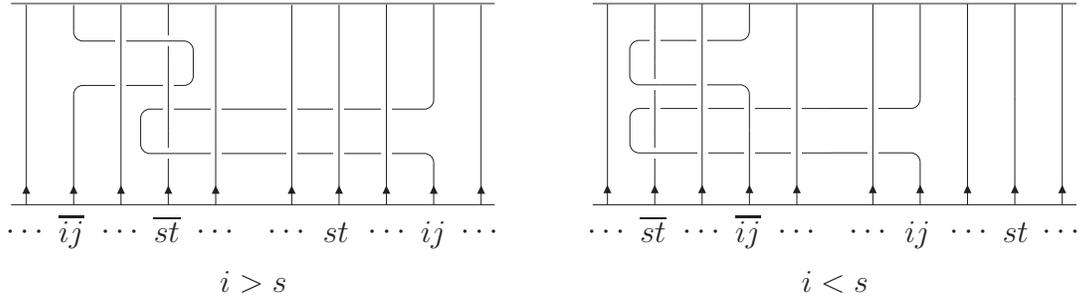

\vspace{0.4cm}
$$
\raisebox{-20 pt}{\begin{overpic}[width=183pt]{H2-elements-05.eps}
\put(-2,-13){$\cdots$} \put(18,-14){$\overline{ij}$} \put(34,-13){$\cdots$}\put(54,-14){$\overline{st}$} \put(70,-13){$\cdots$}
\put(97,-13){$\cdots$} \put(118,-14){$st$} \put(135,-13){$\cdots$} \put(155,-14){$ij$} \put(170,-13){$\cdots$}
\put(79,-33){$i>s$}
\end{overpic}} 
\hspace{1.3cm}
\raisebox{-20 pt}{\begin{overpic}[width=183pt]{H2-elements-07.eps}
\put(-2,-13){$\cdots$} \put(18,-14){$\overline{st}$} \put(34,-13){$\cdots$}\put(54,-14){$\overline{ij}$} \put(70,-13){$\cdots$}
\put(97,-13){$\cdots$} \put(118,-14){$ij$} \put(135,-13){$\cdots$} \put(155,-14){$st$} \put(170,-13){$\cdots$}
\put(79,-33){$i<s$}
\end{overpic}} 
$$ 
\vspace{0.8cm}
\caption{
$(\overline{x}_{st}, \overline{x}_{st})_{ij}$.
}\label{barCLpartialconj}
\end{figure}

\begin{figure}[h]
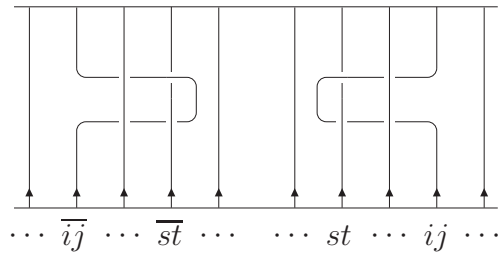

$$
\raisebox{-20 pt}{\begin{overpic}[width=183pt]{H2-elements-03.eps}
\put(-2,-13){$\cdots$} \put(18,-14){$\overline{ij}$} \put(34,-13){$\cdots$}\put(54,-14){$\overline{st}$} \put(70,-13){$\cdots$}
\put(98,-13){$\cdots$} \put(118,-14){$st$} \put(135,-13){$\cdots$} \put(155,-14){$ij$} \put(170,-13){$\cdots$}
\end{overpic}}
$$ 
\vspace{0.1cm}
\caption{$(\overline{x}_{st}, x_{st})_{ij}$.}\label{CLconjugation}
\end{figure}

\if0
Let $(\overline{x}_i,\overline{x}_i)_j$ ($1 \leq i \neq j \leq n$) be a $2n$-string link as illustrated in Figure \ref{partialconj}. The left figure is the case $i<j$ and the right figure is the case $j<i$.

The action of a subgroup generated by $(\overline{x}_i, \overline{x}_i)_j$ ($1 \leq i \neq j \leq n$) on $\mathcal{H}(n)$ induces the partial conjugations.
In fact, for a decomposed string link $\theta_j g_j$, we have $(\overline{x}_i,\overline{x}_i)_j \cdot \theta_j g_j  = \theta_j g_j^{x_i}$. Therefore for any $(\overline{x}_i,\overline{x}_i)_j$ and string link $\sigma$, the action give a partial conjugation of $\sigma$ and it is clear that any partial conjugation $\theta_j g_j^h$ of $\theta_j g_j$ is induced by $(\overline{x}_i,\overline{x}_i)_j$ for $i=1, \cdots j-1, j+1, \cdots, n$.
For the use in Section \ref{test}, we also introduce generators $(\overline{x}_i,x_i)_j$ of conjugations in $\mathcal{H}(2n)$ as illustrated in Figure \ref{conjugation}. 
\fi

The following proposition is a corollary of Proposition \ref{L-kernel}, since the elements $(\overline{x}_{st}, \overline{x}_{st})_{ij}$ vanish up to CL-homotopy if $i=s$. 

\begin{proposition} \label{CL-kernel}
$\mathcal{S}_{CL}({\bm l})$ is generated by the elements $(\overline{x}_{st}, \overline{x}_{st})_{ij}$ such that $i\neq s$.
\end{proposition}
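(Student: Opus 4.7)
The strategy is to deduce the statement directly from Proposition \ref{L-kernel} via the natural forgetful surjection to the colored setting.

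Set $n = l_1 + \cdots + l_m$ and consider the projection $\pi \colon \mathcal{H}(2n) \twoheadrightarrow \mathcal{CH}(\overline{\bm l}{\bm l})$ obtained by decorating the $2n$ components with the colors prescribed by $\overline{\bm l}{\bm l}$ and passing from link-homotopy to the strictly coarser CL-homotopy equivalence. Since any $s \in \mathcal{H}(2n)$ with $s \cdot {\bm 1} = {\bm 1}$ in $\mathcal{H}(n)$ also satisfies $\pi(s) \cdot {\bm 1} = {\bm 1}$ in $\mathcal{CH}({\bm l})$, the map $\pi$ restricts to a homomorphism $\pi|_{\mathcal{S}(n)} \colon \mathcal{S}(n) \to \mathcal{S}_{CL}({\bm l})$.

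The main step is to show that $\pi|_{\mathcal{S}(n)}$ is surjective. Given $t \in \mathcal{S}_{CL}({\bm l})$, I would first pick any lift $\widetilde{t} \in \mathcal{H}(2n)$. Then $\widetilde{t} \cdot {\bm 1} \in \mathcal{H}(n)$ is trivial in $\mathcal{CH}({\bm l})$, so it lies in $\ker\bigl(\mathcal{H}(n) \to \mathcal{CH}({\bm l})\bigr)$. Using the decompositions of Theorems \ref{SLclassify} and \ref{CSLclassify}, this kernel is generated, fiber by fiber along the semidirect-product layers, by the extra defining relations of $RCF$ over $RF$, i.e.\ by commutators $[x_{ij}^g, x_{it}^h]$ of conjugates of same-color free generators. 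Each such commutator is realized as the action $\xi \cdot {\bm 1}$ of an explicit element $\xi \in \ker(\pi)$ built from $2n$-string links of the form $(\overline{x}_{st}, \overline{x}_{st})_{ij}$ with $i = s$. Multiplying $\widetilde{t}$ by an appropriate product of such $\xi$, I obtain a modified lift $\widetilde{t}' \in \mathcal{S}(n)$ with $\pi(\widetilde{t}') = t$.

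With surjectivity in hand, Proposition \ref{L-kernel} expresses $\widetilde{t}'$ as a product of the generators $(\overline{x}_i, \overline{x}_i)_j$ of $\mathcal{S}(n)$. Under $\pi$ these become the colored generators $(\overline{x}_{st}, \overline{x}_{st})_{ij}$ for all $(s,t) \neq (i,j)$. When $i = s$, the wrapping and wrapped strands share the same color, so every crossing appearing in the pattern of Figure \ref{barCLpartialconj} may be freely switched within CL-homotopy, unravelling the element to the identity of $\mathcal{CH}(\overline{\bm l}{\bm l})$. Deleting these trivial factors from the word for $\pi(\widetilde{t}') = t$ displays $t$ as a product of $(\overline{x}_{st}, \overline{x}_{st})_{ij}$ with $i \neq s$, as desired.

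The hardest part will be the lifting step, since one must check that every element of $\ker\bigl(\mathcal{H}(n) \to \mathcal{CH}({\bm l})\bigr)$ can actually be absorbed into the choice of lift by multiplying $\widetilde{t}$ by elements of $\ker(\pi)$. Making this precise requires tracking how the action $\Sigma \cdot \sigma$ interacts with the composition of string links and with the semidirect-product structures on both sides; an induction on the number of colors $m$ based on Theorem \ref{CSLclassify}, at each step absorbing only the new commutation relations introduced in passing from $RF$ to $RCF$, should make this routine.
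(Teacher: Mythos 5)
Your overall route (pushing Proposition \ref{L-kernel} forward through the forgetful map $\pi$) is the same as the paper's, which treats the statement as an immediate corollary, but two of your steps fail as written. First, the lifting step: you propose to absorb the defect $k=\widetilde{t}\cdot{\bm 1}\in\ker\bigl(\mathcal{H}(n)\to\mathcal{CH}({\bm l})\bigr)$ by multiplying $\widetilde{t}$ with elements $\xi\in\ker(\pi)$ ``built from'' the string links $(\overline{x}_{st},\overline{x}_{st})_{ij}$ with $i=s$, realizing the commutator generators of that kernel as $\xi\cdot{\bm 1}$. Those elements are among Habegger--Lin's generators of $\mathcal{S}(n)$, so they fix ${\bm 1}$ in $\mathcal{H}(n)$; no product of them can act on ${\bm 1}$ to produce a nontrivial element, so this mechanism cannot correct the lift. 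The surjectivity you are after is nonetheless true and cheap: letting $\iota\colon\mathcal{H}(n)\to\mathcal{H}(2n)$ be the splitting that adds trivial barred strands, the element $\iota(k^{-1})$ lies in $\ker(\pi)$ precisely because $k$ is CL-homotopically trivial, its action is left multiplication by $k^{-1}$, and $\iota(k^{-1})\,\widetilde{t}$ is the desired lift in $\mathcal{S}(n)$.

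The second, more serious, gap is the final step. For $i=s$ the factor $(\overline{x}_{st},\overline{x}_{st})_{ij}$ does \emph{not} unravel to the identity of $\mathcal{CH}(\overline{\bm l}{\bm l})$: in the decomposition $\overline{\bm l}{\bm l}$ the colors $i$ and $\overline{i}$ are distinct, so only the crossings of the $\overline{ij}$ strand with the $\overline{it}$ strand may be switched, while the $ij$ strand (color $i$) still links the $\overline{it}$ strand (color $\overline{i}$) once. That linking number is a CL-homotopy invariant and is additive under composition, so this element is a nontrivial member of $\mathcal{S}_{CL}({\bm l})$ which, moreover, cannot be expressed as a product of the generators with $i\neq s$ (every such generator has vanishing linking number between each unbarred strand and each barred strand of the same underlying color). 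Hence the $i=s$ factors cannot simply be deleted from the word for $t$; what is true---and what the paper's phrase ``vanish up to CL-homotopy'' means---is that these elements act trivially on $\mathcal{CH}({\bm l})$: their action conjugates the $(i,j)$-th longitude by the same-colored meridian $x_{it}$, which is undone by permitted crossing changes. Your argument therefore establishes generation only modulo elements acting trivially, i.e.\ that the actions of the $(\overline{x}_{st},\overline{x}_{st})_{ij}$ with $i\neq s$ generate the action of $\mathcal{S}_{CL}({\bm l})$ on $\mathcal{CH}({\bm l})$---which is all that Theorem \ref{Markov-type-for-CL} and the algorithm need---and the conclusion should be phrased at that level rather than as literal generation of the subgroup of $\mathcal{CH}(\overline{\bm l}{\bm l})$ with the $i=s$ factors discarded.
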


A \textit{d-base} for a colored link $L$ is a disc embedded in $S^3$ which intersects $L$ transversely at the $p_{ij}$ points of the disc with intersection numbers $+1$. 
A pair $(L,D)$ of a colored link $L$ and a d-base $D$ is called \textit{d-based colored link}. Two d-based colored links are \textit{CL-homotopic} if they are transformed to each other by a sequence of ambient isotopies and crossing changes between the same colored components, where the supporting balls of the crossing changes are disjoint from the d-bases. 
By taking the closure $\hat{\sigma}$ of a colored string link $\sigma$, 
$D\times \{0\}$ and $D\times \{1\}$ of the cylinder of $\sigma$ are identified and the disc becomes a canonical d-base $D$ of $\hat{\sigma}$ and we have d-based colored link $(\hat{\sigma}, D)$. 
An analogue of Corollary 2.4 of \cite{HL} we have the following proposition. 

\begin{proposition}
The correspondence $\sigma\mapsto (\hat{\sigma}, D)$ induces a bijection between the set of CL-homotopy classes of colored string links and that of b-based colored links. 
\end{proposition}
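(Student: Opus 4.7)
The plan is to follow the template of Corollary~2.4 of \cite{HL}, verifying that $\sigma \mapsto (\hat{\sigma}, D)$ descends to a well-defined map on CL-homotopy classes and is both surjective and injective. Well-definedness is straightforward: ambient isotopies of the cylinder $D^2 \times [0,1]$ rel $\partial$ close up to ambient isotopies of $S^3$ preserving the canonical d-base, and self-color crossing changes in $D^2 \times [0,1]$ close up to same-color crossing changes on the closure whose support is disjoint from $D$. Surjectivity is also direct: given $(L,D)$, a bicollar of $D$ can be identified with $D^2 \times [-\varepsilon,\varepsilon]$ meeting $L$ in $p_{ij} \times [-\varepsilon,\varepsilon]$, and cutting $L$ along $D$ then produces a colored string link whose canonical closure recovers $(L,D)$.

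The substantive step is injectivity. Suppose $(\hat{\sigma},D)$ and $(\hat{\sigma}',D)$ are CL-homotopic as d-based colored links; we must produce a CL-homotopy between $\sigma$ and $\sigma'$. By definition the given CL-homotopy is a finite concatenation of two kinds of moves: (i) same-color crossing changes with supporting ball disjoint from $D$, and (ii) ambient isotopies of $S^3$ carrying the pair to itself at the endpoints. Moves of type (i) translate directly, after cutting along $D$, into same-color crossing changes between strings of the corresponding colored string link. The real work is in type (ii), since during the isotopy the d-base may move around before returning to $D$.

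The key point for (ii), and the main obstacle, is that any ambient isotopy $\Phi_t$ of $S^3$ with $\Phi_0(D)=\Phi_1(D)=D$ can be modified, without changing its effect on the link at the endpoints, to an isotopy that fixes $D$ pointwise throughout. This rests on the standard unknottedness of a $2$-disk in $S^3$: the space of smooth embeddings of $D$ into $S^3$ that agree with a fixed embedding on $\partial D$ is path-connected, so the family $t \mapsto \Phi_t(D)$ can be unwound by an auxiliary ambient isotopy $\Psi_t$, supported in a small neighborhood of the family of disks, satisfying $\Psi_t(D)=\Phi_t(D)$ and $\Psi_0=\Psi_1=\mathrm{id}$. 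Replacing $\Phi_t$ by $\Psi_t^{-1}\circ \Phi_t$ yields an isotopy of $S^3$ with the same effect on the link but preserving $D$ setwise, and a further contraction of $\Phi_t|_D$ in $\mathrm{Diff}(D\,\mathrm{rel}\,\partial D)$ (using that the coloring forces each $p_{ij}$ to return to itself, so no permutation obstruction arises) allows us to promote this to pointwise fixing of $D$, and in particular of the marked points.

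With $D$ fixed pointwise throughout the family, the modified isotopy restricts to an ambient isotopy of the complement of a regular neighborhood of $D$, which is diffeomorphic to $D^2 \times [0,1]$, relative to $\partial D^2 \times [0,1]$ and the endpoints $p_{ij}\times\{0,1\}$; this is precisely an isotopy of colored string links carrying $\sigma$ to $\sigma'$. Concatenating the translations of the moves of types (i) and (ii) along the given CL-homotopy yields the desired CL-homotopy between $\sigma$ and $\sigma'$, proving injectivity and hence the bijection.
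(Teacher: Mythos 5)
The paper itself gives no written proof of this proposition (it is asserted as an analogue of Corollary 2.4 of \cite{HL}), so your attempt has to be judged on its own; its well-definedness and surjectivity steps are fine, but there is a genuine gap in injectivity, at exactly the step you single out as the key point. First, an ambient isotopy of $S^3$ that fixes the $2$-disk $D$ pointwise need not preserve a bicollar $D\times[-\epsilon,\epsilon]$, nor the complementary ball: strands of the link can pass around $\partial D$ from one side of $D$ to the other without ever meeting $D$. So your final step, that the modified isotopy ``restricts to an ambient isotopy of the complement of a regular neighborhood of $D$'', does not follow from fixing $D$ pointwise; for the cutting argument you need the isotopy to fix a whole neighborhood of $D$ (equivalently, to preserve the exterior ball rel its boundary), and nothing in your construction provides that.

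Second, and more seriously, the obstruction you wave off with ``no permutation obstruction arises'' is a point-pushing (pure-braid) obstruction, and it defeats the claim that $\Phi_t$ can be modified ``without changing its effect on the link''. Concretely, take a loop $\beta_t$ of the configuration of the marked points $p_{ij}$ in $D$, extend it to diffeomorphisms of $D$ and then to an ambient isotopy supported in a bicollar $D\times[-\epsilon,\epsilon]$, damped to the identity at $s=\pm\epsilon$. This isotopy carries the pair $(\hat{\sigma},D)$ to $(\widehat{\beta^{-1}\sigma\beta},D)$, where $\beta$ is the pure braid traced out by the marked points. Thus, with the definition of CL-homotopy of d-based colored links taken literally (ambient isotopies of the pair plus same-color crossing changes off the d-base), the closure map identifies a colored string link with its conjugates; since conjugation is in general nontrivial in $\mathcal{CH}({\bm l})$ (it is precisely the move generated by the elements $(\overline{x}_{st},x_{st})_{ij}$ of $\mathcal{S}_{CL}({\bm l})$ used in the Markov-type theorem), no argument can give injectivity under that reading, and in particular your claimed reduction would prove the false statement $\sigma\simeq\beta^{-1}\sigma\beta$. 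The intended (Habegger--Lin style) setting requires the homotopies of d-based links to fix a bicollar of the d-base, i.e.\ to take place in the exterior ball, and the injectivity argument should be run there (where it follows from triviality of the mapping class group of the ball rel boundary), rather than via your reduction. A smaller point: your auxiliary isotopy $\Psi_t$ with $\Psi_0=\Psi_1=\mathrm{id}$ and $\Psi_t(D)=\Phi_t(D)$ needs control of loops of embedded disks, not just path-connectivity of embeddings rel $\partial D$; but this is secondary to the issues above.
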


Then we have an Markov-type theorem for colored links. 
\begin{theorem}\label{Markov-type-for-CL}
Let $\sigma$ and $\sigma'$ be in $\mathcal{CH}({\bm l})$. Then the closures of $\sigma$ and $\sigma'$ are CL-homotopic if and only if there is an element $s\in\mathcal{S}_{CL}({\bm l})$ such that $s\cdot\sigma=\sigma'$. 
\end{theorem}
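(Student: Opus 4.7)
The plan is to mirror the Habegger--Lin proof of Theorem \ref{Lclassify} in the colored setting, using the bijection $\sigma\mapsto(\hat{\sigma},D)$ between colored string links up to CL-homotopy and $d$-based colored links up to CL-homotopy (the preceding proposition) as the main bridge. Throughout, the extra input over the classical case is that every move encountered must respect the coloring, which in the end reduces to verifying that the generators produced sit in the subgroup $\mathcal{S}_{CL}({\bm l})$ described by Proposition \ref{CL-kernel}.

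For the \emph{if} direction, suppose $\sigma'=s\cdot\sigma$ for some $s\in\mathcal{S}_{CL}({\bm l})$. I interpret $s\cdot\sigma$ as the colored string link obtained by inserting $\sigma$ into the inner slot of the $\mathcal{CH}(\overline{\bm l}{\bm l})$-diagram of $s$, as in Figure \ref{CLaction}. Taking closure identifies each outer ``barred'' strand of $s$ with its inner partner, so $s$ becomes a colored tangle sitting in a collar neighborhood of the canonical $d$-base $D$ of $\hat{\sigma}$. Since $s\cdot{\bm 1}\simeq_{\mathrm{CL}}{\bm 1}$ by definition of $\mathcal{S}_{CL}({\bm l})$, I can slide this tangle off $D$ through the complementary region and collapse it to the trivial tangle, concluding $\widehat{s\cdot\sigma}\simeq_{\mathrm{CL}}\hat{\sigma}$. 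Every crossing change used in the collapse is inherited from the $s\cdot{\bm 1}\simeq_{\mathrm{CL}}{\bm 1}$ witness, so it is between arcs of the same color and constitutes a legitimate CL-homotopy move.

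For the \emph{only if} direction, assume $\hat{\sigma}\simeq_{\mathrm{CL}}\hat{\sigma'}$. I realize the CL-homotopy as a finite sequence of ambient isotopies and same-color crossing changes, and by general position arrange that at each moment the carried $d$-base stays transverse to the link and disjoint from the support of each crossing change. Tracking the motion of the canonical $d$-base $D$ of $\hat{\sigma}$ through this homotopy produces a final $d$-base $D''$ of $\hat{\sigma'}$; a further ambient isotopy brings $D''$ back to the canonical $d$-base $D'$ of $\hat{\sigma'}$. Cutting $S^3$ open along the trace of $D$ during the combined motion produces a colored string link $s\in\mathcal{CH}(\overline{\bm l}{\bm l})$ with $s\cdot\sigma\simeq_{\mathrm{CL}}\sigma'$, exactly as in Habegger--Lin. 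Applying the same motion to $(\widehat{\bm 1},D)$ leaves the trivial $d$-based link unchanged up to CL-homotopy, which gives $s\cdot{\bm 1}\simeq_{\mathrm{CL}}{\bm 1}$ and hence $s\in\mathcal{S}_{CL}({\bm l})$.

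The hard part, and the only genuinely new ingredient over \cite{HL}, will be verifying that the record of the $d$-base motion decomposes into elementary pieces each of which is itself a CL-homotopy, so that no crossing change between differently colored components is ever forced. The reduction I have in mind is local: each transverse passage of the moving $d$-base through a component produces a local generator of the type pictured in Figures \ref{CLpartialconj}--\ref{CLconjugation}, and any crossing change performed during the homotopy is by hypothesis between same-color arcs, so lifting it through the cut gives either a CL-homotopy move on $\sigma$ itself or a generator of $\mathcal{S}_{CL}({\bm l})$. Proposition \ref{CL-kernel}, together with the Habegger--Lin--Hughes Proposition \ref{L-kernel} which it refines, then confirms that the cumulative $s$ obtained lies in the required subgroup, completing the proof.
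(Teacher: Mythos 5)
Your argument is correct and takes essentially the same route as the paper, whose proof simply runs Habegger--Lin's argument for Theorem \ref{Lclassify} verbatim with d-based colored links in place of d-based links, exactly as you do. The only superfluous step is your final appeal to Proposition \ref{CL-kernel}: since $\mathcal{S}_{CL}({\bm l})$ is \emph{defined} as the stabilizer of ${\bm 1}$, establishing $s\cdot{\bm 1}={\bm 1}$ already yields $s\in\mathcal{S}_{CL}({\bm l})$, and no analysis of generators is needed for this theorem.
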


\begin{proof}
The proof is done parallel to that of Theorem 2.9 in \cite{HL} (Theorem \ref{Lclassify} above) by using the d-bases for colored links. 
\end{proof}

\subsection{Milnor invariants for colored string links and colored links}
The Milnor invariants for (string) links are also available for colored (string) links by using the component numbers $(i,j)$ for colored (string) links. For sequences with non-repeating colors, we have CL-homotopy invariants. 

%\begin{theorem}
%For a colored string links $\sigma$ and a sequence of component numbers
%$$I=(i_0, j_0)(i_1,j_1)\cdots(i_k,j_k) $$
%with no-repeating colors (i.e. $i_s\neq i_t$ if $s\neq t$), 
%the Milnor invariant $\mu_{\sigma}(I)$ is CL-homotopy invariants of $\sigma$. 
%\end{theorem}

\begin{proposition}\label{Milnorinvarinat}
For a colored link $L$ and a sequence of component numbers
$$I=(i_0, j_0)(i_1,j_1)\cdots(i_k,j_k) $$
with no-repeating colors (i.e. $i_s\neq i_t$ if $s\neq t$), 
the Milnor invariant $\overline{\mu}_{L}(I)$ is a CL-homotopy invariant of $L$. 
\end{proposition}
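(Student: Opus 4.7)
The plan is to extend Milnor's original link-homotopy invariance proof from \cite{Mil2} to the CL-homotopy setting by handling the one new elementary move. Any CL-homotopy decomposes into ambient isotopies (which obviously preserve $\overline{\mu}_L(I)$) and single crossing changes between two arcs of same-colored components. A crossing change between two arcs of a single component is a classical self-crossing change, for which invariance of $\overline{\mu}_L(I)$ on non-repeating-index sequences is already established in \cite{Mil2}. The remaining case, which is the new content of the proposition, is a single crossing change between two distinct components $(i, j_1)$ and $(i, j_2)$ of a common color $i$; it suffices to prove invariance of $\overline{\mu}_L(I)$ under this move.

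Fix a diagram $D$ of $L$ and let $D'$ be the diagram obtained by switching the designated crossing, with resulting colored link $L'$. Comparing the Wirtinger presentations of $\pi_1(S^3 \setminus L)$ and $\pi_1(S^3 \setminus L')$ associated to $D$ and $D'$, I would verify two facts: (a) for every component $(s, t) \notin \{(i, j_1), (i, j_2)\}$, the preferred longitude word of $(s, t)$ is unchanged up to conjugation by meridians of color $i$, because the component $(s,t)$ itself, and all of its underpassings, are outside a neighborhood of the switched crossing, and only the Wirtinger arcs of color-$i$ meridians near that crossing are relabeled; and (b) for $(s, t) \in \{(i, j_1), (i, j_2)\}$, the new preferred longitude is obtained from the old by inserting or deleting a single Wirtinger conjugate of $m_{(i, j_*)}^{\pm 1}$, where $j_*$ denotes the other of $j_1, j_2$. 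In both (a) and (b), the modification involves only meridians of color $i$.

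Applying the Magnus expansion $\mathcal{F}$ with $\mathcal{F}(m_{(s,t)}) = 1 + X_{(s,t)}$ and reading off the coefficient of $M = X_{(i_0, j_0)} X_{(i_1, j_1)} \cdots X_{(i_{k-1}, j_{k-1})}$ in $\mathcal{F}(\lambda_{(i_k, j_k)})$ yields $\mu_L(I)$, which descends to $\overline{\mu}_L(I)$ after reduction modulo $\Delta(I)$. Because the colors $i_0, \ldots, i_k$ are all distinct, $M$ contains no letter of color $i_k$. If $i_k \neq i$, then by (a) the coefficient of $M$ is preserved up to conjugation by color-$i$ meridians, an ambiguity already absorbed into $\Delta(I)$. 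If $i_k = i$, then by (b) the change in $\mathcal{F}(\lambda_{(i_k, j_k)})$ is a power series each of whose nonzero monomials contains at least one letter of color $i = i_k$, so the coefficient of $M$ is again unaffected. Either way, $\overline{\mu}_L(I)$ is preserved.

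The main obstacle is making (a) precise: showing that the relabeling of color-$i$ Wirtinger arcs near the modified crossing alters the longitude words of other components only by conjugation by color-$i$ meridians. This step is the colored analogue of Milnor's meridian-conjugation invariance argument in \cite{Mil2}, and it reduces, as there, to the bookkeeping that any Wirtinger arc of a color-$i$ component is a conjugate of $m_{(i, j_1)}$ or $m_{(i, j_2)}$, so any relabeling happens inside the subgroup generated by conjugates of these meridians — precisely the ambiguity encoded by $\Delta(I)$.
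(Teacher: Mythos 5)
Your reduction to a single crossing change between two distinct components of one color, and your final Magnus-expansion counting (monomials with at most one letter per color cannot meet terms forced to contain color-$i$ letters twice, or color-$i_k$ letters at all), are sound and in fact coincide with the mechanism the paper uses in its items (4) and (5). But the pivotal claim (a) is wrong as stated, and it is exactly the point you yourself flag as the ``main obstacle.'' After switching a crossing between $(i,j_1)$ and $(i,j_2)$, the preferred longitude of an uninvolved component $(s,t)$ is \emph{not} merely replaced by a global conjugate by color-$i$ meridians. What happens is local and letter-wise: the arc structure of the two color-$i$ strands changes at the switched crossing, so every letter of the longitude word (and of the conjugating words expressing arc meridians in terms of the preferred meridians) that records a passage under those strands gets replaced by a conjugate of itself by an element of the normal closure $A_i$ of the color-$i$ meridians. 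Since both the letter and the conjugator lie in $A_i$, each such replacement amounts to inserting an element of the commutator subgroup $(A_i)_2$ at an \emph{interior} position of the word, not to conjugating the whole word. Consequently your proposed mechanism for case $i_k\neq i$ --- ``an ambiguity already absorbed into $\Delta(I)$'' --- is the wrong one: $\Delta(I)$ absorbs global conjugation (basepoint change), but it does not absorb interior modifications. The correct reason these interior insertions are harmless is your own counting argument: every non-constant term of the Magnus expansion of an element of $(A_i)_2$ contains at least two letters of color $i$, while the monomial read off for $\mu_L(I)$ contains at most one letter of each color, so the relevant coefficient is untouched. A similar caveat applies to (b): besides the single inserted or deleted conjugate of the other same-colored meridian, the longitudes of $(i,j_1)$ and $(i,j_2)$ also undergo the same interior $(A_i)_2$-type relabelings, which must be handled by the same counting rather than ignored.

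For comparison, the paper sidesteps this diagrammatic bookkeeping entirely: it passes to the colored Milnor group $\mathcal{CG}_L=G_L/\prod_i (A_i)_2$, notes (following Milnor and the colored Milnor group theory) that this quotient, the conjugacy classes of meridians, and the longitudes modulo $A_{i_k}$ are CL-homotopy invariants, and then reduces invariance of $\overline{\mu}_L(I)$ to five explicit word moves: (1)--(3) are Milnor's classical ambiguities, while (4) multiplication by $(A_i)_2$ and (5) multiplication by $A_{i_k}$ are disposed of precisely by the coefficient-counting you give. So your endgame is the same as the paper's; to make your route complete you would need to prove the corrected version of (a)--(b), namely that a same-color crossing change alters each longitude word only by global conjugation, interior insertions of elements of $(A_i)_2$, and (for the two components involved) insertion or deletion of a conjugate of a color-$i$ meridian --- which is essentially re-deriving, at the Wirtinger level, the invariance of $\mathcal{CG}_L$ that the paper simply invokes.
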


\begin{proof}
Let  $L$ be an $m$-colored link with a component decomposition ${\bm l}=(1_{l_1}, \cdots, m_{l_m})$.
Let $G_L=\pi_1({S}^3\setminus{L})$ and $c=\sum_{i=1}^{m} l_i$. Then $G_L / \Gamma_{c+1}G_L$ has a presentation 
$$\left< \alpha_{ij}  \mid [\alpha_{ij}, \beta_{ij}] \ (1 \leq i \leq m, 1 \leq j \leq  l_i), \Gamma_{c+1}A \right>,$$
where $\alpha_{ij}$ and $\beta_{ij}$ are a meridian and a longitude of $(i,j)$-th component respectively, $A$ is a free group generated by $\alpha_{ij} \ (1 \leq i \leq m, 1 \leq j \leq  l_i)$ and $\Gamma_{c+1}$ is the ($c+1$)-th lower central series.
Moreover $G_L / \prod_{i=1}^{m} (A_i)_2$ is a quotient group of $G_L / \Gamma_{c+1}G_L$ which has a presentation 
$$\left< \alpha_{ij}   \mid [\alpha_{ij}, \beta_{ij}], (A_i)_2 \ (1 \leq i \leq m, 1 \leq j \leq  l_i) \right>, $$ 
where $A_i$ is the normal subgroup of $G_L$ generated by the meridians $\alpha_{i 1},\cdots \alpha_{i l_i}$ and $(A_i)_2$ is the commutator subgroup of $A_i$.
We denote the quotient group by $\mathcal{CG}_L$.
As in the case of link-homotopy for links, the quotient group $\mathcal{CG}_L$ is invariant under CL-homotopy. Furthermore, the conjugate classes $\lambda \alpha_{ij} \lambda^{-1}$ of the meridian $\alpha_{ij}$ of the $(i,j)$-th component in $\mathcal{CG}_L$ and $\lambda \beta_{ij} \lambda^{-1}$ of the longitude $\beta_{ij}$ of the $(i,j)$-th component in $\mathcal{CG}_L/A_i$ are invariant under CL-homotopy.
%上記もう少し詳しく書くべき？
To prove that $\overline{\mu}_L ((i_0, j_0)(i_1,j_1)\cdots(i_k,j_k))$ is a CL-homotopy invariant, it is sufficient to prove that it is not changed when 
\begin{itemize}
\item[(1)] the word $w_{i_k j_k}$ for the longitude $\beta_{i_k j_k}$ is replaced by a conjugation, 
\item[(2)]some $\alpha_{i j}$ is replaced by a conjugate, 
\item[(3)] $w_{i_k j_k}$ is multiplied by a product of conjugates of $\alpha_{ij}^{-1} w_{ij}^{-1} \alpha_{ij} w_{ij}$, 
\item[(4)] $w_{i_k j_k}$ is multiplied by an element of $(A_i)_2$, 
\item[(5)] $w_{i_k j_k}$ is multiplied by an element of $A_{i_k}$. 
\end{itemize}
The invariance for (1), (2), (3) is proved in \cite{Mil2}.
To prove (4), each term of the image of an element of $(A_i)_2$ by the Magnus expansion contains any of the factors $X_{i1}, \cdots ,X_{il_i}$ at least twice, except for the constant term 1. But $\mu_{L}(I)$ is the coefficient of a term in the Magnus expansion of $w_{i_k j_k}$ which contains any of the factors $X_{i1}, \cdots ,X_{il_i}$ at most one.   
Similarly, to prove (5), each term of the image of an element of $(A_{i_k})$ by the Magnus expansion contains any of the factors $X_{i_k 1}, \cdots , X_{i_k l_{i_k}}$, except for the term 1.  But $\mu_{L}(I)$ is the coefficient of a term in the Magnus expansion of $w_{i_k j_k}$ which does not involve $X_{i_k 1}, \cdots ,X_{i_k l_{i_k}}$ as a factor.   
\end{proof}

In the case of colored string links, we have the following proposition.

\begin{proposition}
For a colored string link $\sigma$ and a sequence of component numbers
$$I=(i_0, j_0)(i_1,j_1)\cdots(i_k,j_k) $$
with no-repeating colors (i.e. $i_s\neq i_t$ if $s\neq t$), 
the Milnor invariant $\mu_{\sigma}(I)$ is a CL-homotopy invariant of $\sigma$. 
\end{proposition}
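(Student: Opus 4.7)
The plan is to mirror the proof of Proposition \ref{Milnorinvarinat} above, simplified by the fact that string link longitudes are canonically defined. First I would associate to $\sigma$ the colored Milnor group $\mathcal{CG}_\sigma = \pi_1((D^2 \times [0,1]) \setminus \sigma) / \prod_{i=1}^{m} (A_i)_2$, where $A_i$ denotes the normal closure of the meridians $\alpha_{i1}, \ldots, \alpha_{i l_i}$ of the $i$-colored components. Any self-crossing change between two color-$i$ arcs adds a relation of the form $[\alpha_{ij}, \alpha_{ij'}^g] = 1$, which already lies in $(A_i)_2$, so $\mathcal{CG}_\sigma$ is a CL-homotopy invariant. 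Moreover, for a string link (as opposed to a link), the basepoint, meridians, and the longitudinal word $w_{i_k j_k}$ extracted from the decomposition (\ref{CLdecomposition}) are canonical; the indeterminacies corresponding to cases (1)--(3) of the proof of Proposition \ref{Milnorinvarinat} do not arise.

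Next I would track how $w_{i_k j_k}$ can change under CL-homotopy. A self-crossing change between two arcs of some color $i \neq i_k$ affects $w_{i_k j_k}$ only by multiplication by an element of $(A_i)_2$, essentially because the change conjugates some occurrence of a color-$i$ meridian in the longitudinal word by an element of $A_i$. A self-crossing change between two color-$i_k$ components (possibly including $\sigma_{i_k j_k}$ itself) can additionally multiply $w_{i_k j_k}$ by an element of $A_{i_k}$, arising from the new sub-word that records the altered linking with the other color-$i_k$ strands. These are precisely the analogs of cases (4) and (5) in the proof of Proposition \ref{Milnorinvarinat}.

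Finally, the Magnus expansion step transfers verbatim. The target monomial $X_{i_0 j_0} X_{i_1 j_1} \cdots X_{i_{k-1} j_{k-1}}$ contains no factor from $\{X_{i_k 1}, \ldots, X_{i_k l_{i_k}}\}$ since $i_s \neq i_k$ for all $s < k$, and at most one factor from $\{X_{i1}, \ldots, X_{il_i}\}$ for any fixed color $i$, since $i_0, \ldots, i_{k-1}$ are distinct. Thus each non-constant term of $\mathcal{F}(u)$ for $u \in A_{i_k}$ contains an $X_{i_k t}$ factor and so contributes $0$ to this coefficient, while each non-constant term of $\mathcal{F}(u)$ for $u \in (A_i)_2$ contains at least two factors from $\{X_{it}\}$ and so again contributes $0$. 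Hence $\mu_\sigma(I)$ is unchanged. The main obstacle is the geometric verification from the previous paragraph that a single self-crossing change really does induce only these two kinds of modification of $w_{i_k j_k}$; once that is granted, the non-repeating color hypothesis makes the algebraic check immediate.
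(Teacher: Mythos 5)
Your proposal is correct and follows essentially the same route as the paper: since meridians and longitudes of a colored string link are canonical, the only ambiguities are multiplication of $w_{i_k j_k}$ by elements of $(A_i)_2$ and of $A_{i_k}$ (the paper's cases (4') and (5'), justified there via Stallings' theorem and the colored Milnor group), and the Magnus expansion argument from Proposition \ref{Milnorinvarinat} kills both. Your geometric tracking of how a single same-color crossing change inserts conjugates of commutators of color-$i$ meridians (resp.\ conjugates of color-$i_k$ meridians) is exactly the verification the paper leaves implicit, so the two proofs coincide in substance.
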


\begin{proof}
The proof is similar to the link version. 
Let $\sigma$ be an $m$-colored link with a component decomposition ${\bm l}=(1_{l_1}, \cdots, m_{l_m})$ and $G_\sigma$ the fundemental group of the complement of $\sigma$. In this case,
we can fix the meridian and the preferred longitude and by Stallings' theorem \cite{Sta} $G_\sigma / \Gamma_{c+1}G_\sigma$ has a presentation 
$$\left< \alpha_{ij} \ (1 \leq i \leq m, 1 \leq j \leq  l_i) \mid \Gamma_{c+1}A \right>.$$
Moreover, $G_\sigma / \prod_{i=1}^{m} (A_i)_2$ has a presentation 
$$\left< \alpha_{ij} \ (1 \leq i \leq m, 1 \leq j \leq  l_i) \mid (A_i)_2 \ (1 \leq i \leq m)  \right>.$$ 
Therefore, in order to prove that ${\mu}_{\sigma} ((i_0, j_0)(i_1,j_1)\cdots(i_k,j_k))$ is CL-homotopy invariant, it is sufficient to prove that it is not changed when 
\begin{itemize}
\item[(4')] $w_{i_k j_k}$ is multiplied by an element of $(A_i)_2$, 
\item[(5')] $w_{i_k j_k}$ is multiplied by an element of $A_{i_k}$. 
\end{itemize}
The invariance for (4') and (5') is proved in the proof of Proposition \ref{Milnorinvarinat}.
\end{proof}

We call $\mu_{\sigma}(I)$ (resp. $\overline{\mu}_{L}(I)$) for a sequence $I$ with non-repeating colors a \textit{Milnor homotopy invariants} for colored string links (resp. colored links). 

\begin{theorem}
For a component decomposition ${\bm l}$, the set $\mathcal{CH}({\bm l})$ of CL-homotopy classes of colored string links are classified by 
the Milnor homotopy invariants $\mu_{\sigma}(I)$. 
\end{theorem}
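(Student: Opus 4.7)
The plan is to proceed by induction on the number of colors $m$, exploiting the decomposition $\mathcal{CH}({\bm l}) = \mathcal{CH}({\bm l}_{m-1}) \ltimes RCF({\bm l}_{m-1})^{l_m}$ from Theorem~\ref{CSLclassify}. The preceding proposition already establishes that the Milnor homotopy invariants are well-defined on CL-homotopy classes, so only the converse is needed: equality of every non-repeating-color Milnor invariant should force CL-homotopy. The base case $m=1$ is vacuous, since $\mathcal{CH}((1_{l_1}))$ is trivial (every crossing is a CL-admissible change) and there are no non-repeating-color sequences of length $\geq 2$ to check.

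For the inductive step, let $\sigma,\sigma'\in\mathcal{CH}({\bm l})$ satisfy $\mu_\sigma(I)=\mu_{\sigma'}(I)$ for every non-repeating-color sequence $I$, and let $\hat\sigma,\hat\sigma'\in\mathcal{CH}({\bm l}_{m-1})$ denote the forgotten sub-string links obtained by deleting the color-$m$ components. Setting the variables $X_{m,j}$ to zero in the Magnus expansion shows that $\mu_{\hat\sigma}(I) = \mu_{\sigma}(I)$ for every non-repeating sequence $I$ using only the colors $1,\ldots,m-1$, and analogously for $\sigma'$; the induction hypothesis therefore gives $\hat\sigma = \hat\sigma'$ in $\mathcal{CH}({\bm l}_{m-1})$. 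Using the splitting of Theorem~\ref{CSLclassify}, I may write $\sigma = \tau\beta$ and $\sigma' = \tau\beta'$ with the same $\tau\in\mathcal{CH}({\bm l}_{m-1})$ and with $\beta,\beta'\in RCF({\bm l}_{m-1})^{l_m}$; the problem reduces to showing $\beta=\beta'$. Just as in Habegger--Lin's setting, the Milnor longitude of the $(m,j_k)$-th component of $\tau\beta$ is precisely the $j_k$-th coordinate $\beta_{j_k}\in RCF({\bm l}_{m-1})$. Consequently, for every non-repeating sequence $I=(i_0,j_0)\cdots(i_{k-1},j_{k-1})(m,j_k)$ with $i_0,\ldots,i_{k-1}<m$, the invariant $\mu_\sigma(I)$ is the coefficient of $X_{i_0 j_0}\cdots X_{i_{k-1}j_{k-1}}$ in $\mathcal{F}(\beta_{j_k})$, and similarly for $\sigma'$ with $\beta'_{j_k}$. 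Thus, for each $j_k$, the Magnus expansions of $\beta_{j_k}$ and $\beta'_{j_k}$ agree on every non-repeating-color monomial in the variables $X_{ij}$ with $i<m$.

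The main obstacle, and the heart of the theorem, is then the following analog of Milnor's classification of $RF(n)$ from \cite{Mil2}: an element of $RCF({\bm l}_{m-1})$ is uniquely determined by the coefficients of non-repeating-color monomials in its Magnus expansion. I expect to prove this by adapting Milnor's original argument. The defining relations of $RCF({\bm l}_{m-1})$, namely commutativity of arbitrary conjugates of generators that share the same color, allow any word to be put into a normal form whose free parameters are in bijection with the non-repeating-color Magnus coefficients; equivalently, $RCF({\bm l}_{m-1})$ embeds into the quotient of $\mathbb{Z}\langle\langle X_{ij}\rangle\rangle$ by the ideal generated by monomials in which two variables of the same color appear. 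Granting this lemma, $\beta_{j_k}=\beta'_{j_k}$ for every $j_k$, hence $\beta=\beta'$ and $\sigma=\sigma'$ in $\mathcal{CH}({\bm l})$, which closes the induction.
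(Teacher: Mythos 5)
Your route is genuinely different from the paper's. The paper does not induct on the number of colors at all: it classifies $\mathcal{CH}({\bm l})$ by quoting the clasper-calculus canonical form for string links up to link-homotopy (Theorem 4.3 of \cite{Y}, in the form of Theorem 4.2 of \cite{MY2}) and observing that, after discarding every clasper having two leaves on components of the same color, one obtains a canonical representative $\Sigma_1\Sigma_2\cdots\Sigma_{m-1}$ whose clasper exponents are exactly the Milnor homotopy invariants; injectivity and surjectivity of the invariants are read off from that geometric normal form. Your argument is instead the Habegger--Lin/Milnor algebraic route: induct on $m$ through the splitting $\mathcal{CH}({\bm l})=\mathcal{CH}({\bm l}_{m-1})\ltimes RCF({\bm l}_{m-1})^{l_m}$ of Theorem \ref{CSLclassify}, use stability of $\mu$ under deletion of the color-$m$ strands to equalize the $\mathcal{CH}({\bm l}_{m-1})$-parts, and then compare the kernel coordinates. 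That reduction is sound. One small point you should make explicit: since the paper defines $\mu_\sigma(I)$ via the ordinary longitude in the reduced free group on \emph{all} remaining strands, the identification of $\mu_\sigma(I)$, for $I$ with non-repeating colors ending in $(m,j_k)$, with a coefficient of $\mathcal{F}(\beta_{j_k})$ for $\beta_{j_k}\in RCF({\bm l}_{m-1})$ needs the (easy) observation that deleting the other color-$m$ strands preserves these invariants and that non-repeating-color coefficients are constant on the fibers of the projection from the reduced free group onto $RCF({\bm l}_{m-1})$, because the extra relators have Magnus expansions lying in the ideal generated by monomials containing two same-colored variables.

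The genuine issue is that the entire weight of the theorem is placed on your key lemma --- that an element of $RCF({\bm l}_{m-1})$ is determined by its non-repeating-color Magnus coefficients --- and, as you yourself say, this is the heart of the matter, yet in your write-up it is only an expectation, not a proof. So as it stands you have a correct reduction rather than a complete argument. The lemma is true, and your plan for it is the right one: in $RCF$ any iterated commutator containing two entries of the same color vanishes (an iterated commutator with an entry $x_{ij}$ lies in the normal closure of the color-$i$ generators, and that normal closure is abelian by the defining relations), so $RCF({\bm l}_{m-1})$ is nilpotent of class at most $m-1$ and the usual commutator-collection process yields a spanning normal form by basic commutators with pairwise distinct colors; the non-repeating-color Magnus coefficients then determine the exponents triangularly, degree by degree, which gives uniqueness and injectivity simultaneously, exactly as in Milnor's treatment of $RF(n)$ in \cite{Mil, Mil2} (compare also the colored Milnor groups of \cite{FT}). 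Writing this out (or replacing it by a precise citation) is what is needed to close the proof; once done, your approach even buys an explicit integral parametrization of $\mathcal{CH}({\bm l})$, whereas the paper's clasper proof buys a geometric canonical form that it reuses later.
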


\begin{proof}
Theorem 4.3 in \cite{Y} (see also Theorem 4.2 in \cite{MY2}) showed that a link-homotopy class of string link $S$ has a canonical form up to link-homotopy which consists of the trivial string link and claspers. For details of the clasper, see \cite{Ha}. The canonical form is determined by the Milnor homotopy invariants $\mu_S(I)$ for $S$. This gives an alternative proof for the fact that the set of link-homotopy classes is classified by $\mu_S(I)$ (\cite{HL}). 
\par
Using the similar techniques in \cite{Y}, we have a canonical form of a CL-homotopy class of a colored string link $\sigma$. 
For simplicity, we use the canonical form used in \cite{MY2} which is slightly modified from the one in \cite{Y}. 
Then a canonical form for colored string link up to CL-homotopy is obtained from the one of Theorem 4.2 in \cite{MY2} by omitting the claspers which have two leaves attaching the component with the same colors. The canonical form is determined by the Milnor homotopy invariants $\mu_{\sigma}(I)$ for colored string links. Namely, $\sigma$ is CL-homotopic to $\Sigma=\Sigma_1\Sigma_2\cdots\Sigma_{m-1}$, where
$$\Sigma_i=\prod_{J\in \mathcal{J}_i}T_J^{x_J} \mbox{ and } x_J=\mu_{\Sigma_i}(J)
=\left\{\begin{array}{lc}
\mu_{\sigma}(J) & (i=1),\\[2pt]
\mu_{\sigma}(J)-\mu_{\Sigma_1\Sigma_2\cdots\Sigma_{i-1}}(J) & (i>1).
\end{array}\right.$$
Here $\mathcal{J}_k$ is the set of all sequences $(i_0,j_0)(i_1,j_1)\cdots(i_k,j_k)$ of distinct component numbers with length $k+1$ satisfying $(i_0,j_0)<(i_l,j_l)<(i_k,j_k)$ if $l\neq 0, k$. 
$T^{\pm 1}_J$ is a string link shown in Figure \ref{clasper-surgery}, where we identify the string link attached by the clasper with the string link obtained by the surgery along the clasper.  
\par
This result induces that $\mathcal{CH}({\bm l})$ is classified by the Milnor homotopy invariants for colored string links. 
\end{proof}

\begin{figure}[h]
$$
\raisebox{-20 pt}{\begin{overpic}[width=160pt]{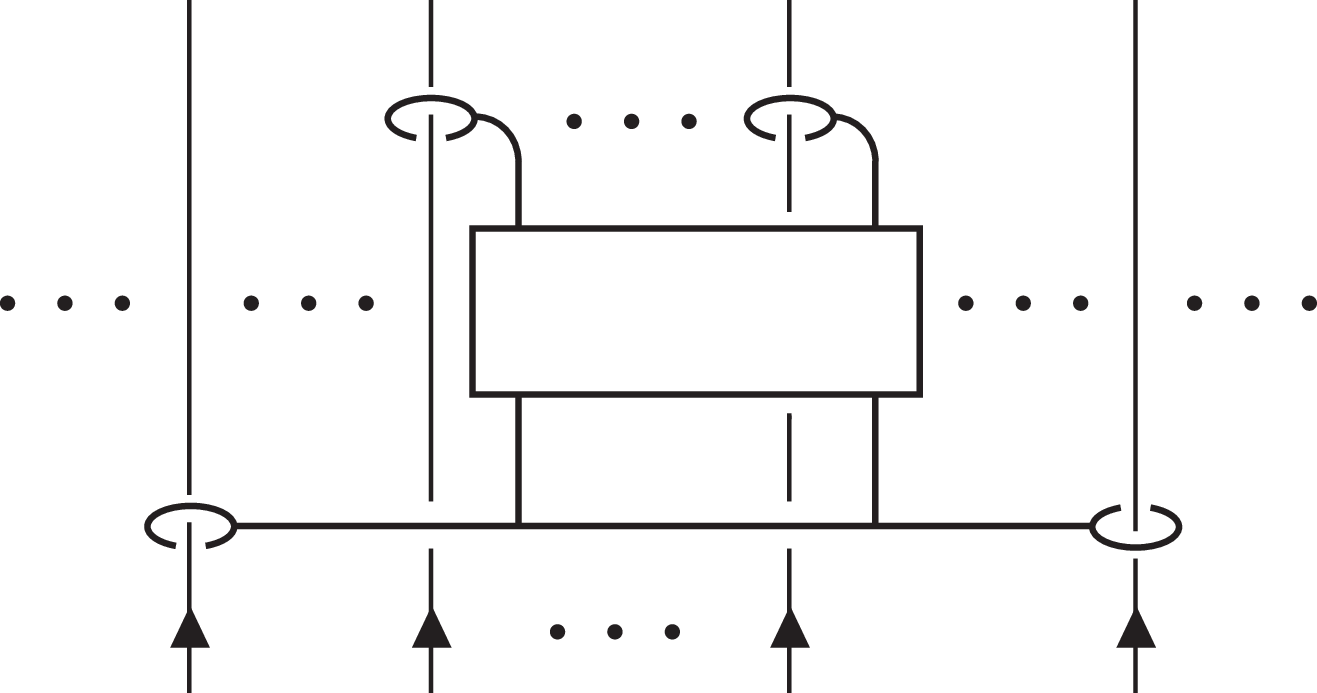}
\put(80,42){$b_J$}
\put(14,-12){$i_0j_0$} 
%\put(34,-13){$\cdots$}
\put(44,-12){$i_1j_1$}
\put(76,-12){$i_{k-1}j_{k-1}$} 
\put(129,-12){$i_kj_k$} 
\end{overpic}}
\hspace{1.6cm}
\raisebox{-20 pt}{\begin{overpic}[width=160pt]{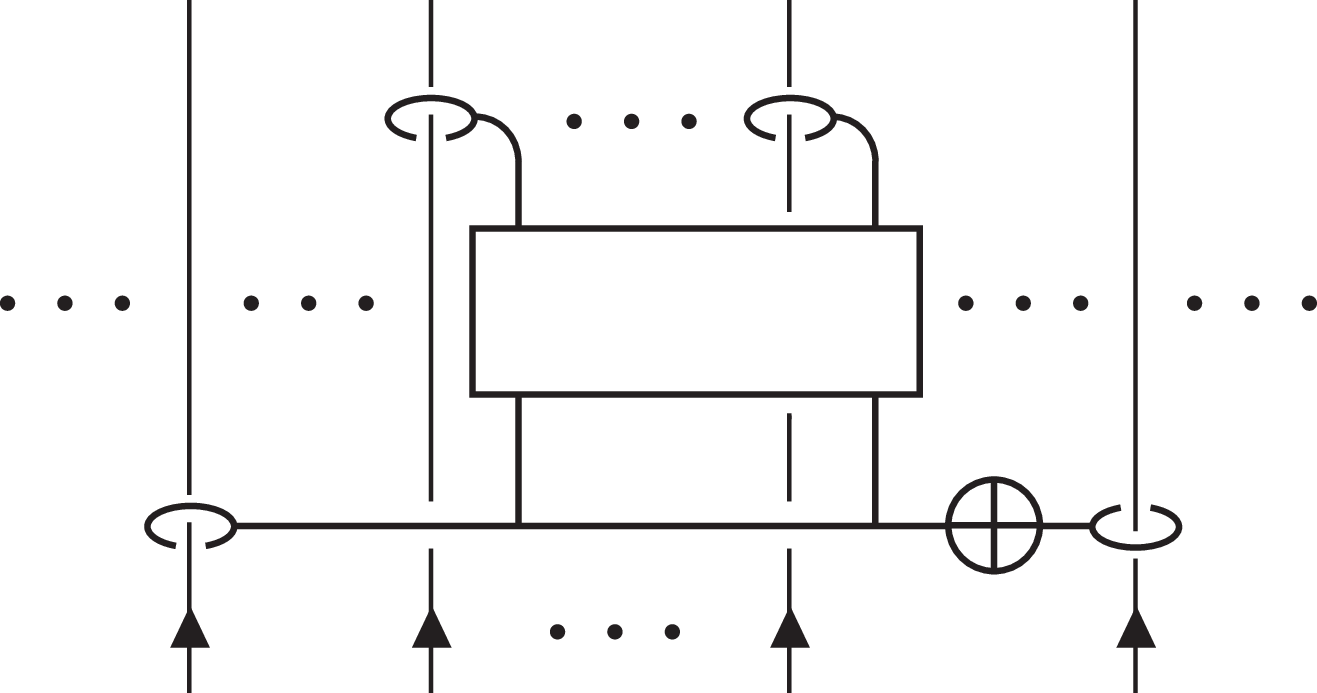}
\put(80,42){$b_J$}
\put(14,-12){$i_0j_0$} 
%\put(34,-13){$\cdots$}
\put(44,-12){$i_1j_1$}
\put(76,-12){$i_{k-1}j_{k-1}$} 
\put(129,-12){$i_kj_k$} 
\end{overpic}}
$$ 
\vspace{0.1cm}
\caption{$T_J$ and $T_J^{-1}$.}\label{clasper-surgery}
\end{figure}

\subsection{A classification of the component-homotopy classes of spatial graphs}
For a graph $G$ with $m$ components, we take a family of spanning trees $T=(t_1, \dots, t_m)$, where $t_i$ is a spanning tree of the $i$-th component of $G$.
For each component of a spatial graph $\psi(G)$, we obtain a spatial B-graph by contracting a subgraph $\psi(t_i)$ to a point for each component. We denote the spatial B-graph by $\psi(G)_T$ (see Figure \ref{contraction}). 

\begin{figure}[h]
$$
\raisebox{-33 pt}{\begin{overpic}[width=100pt]{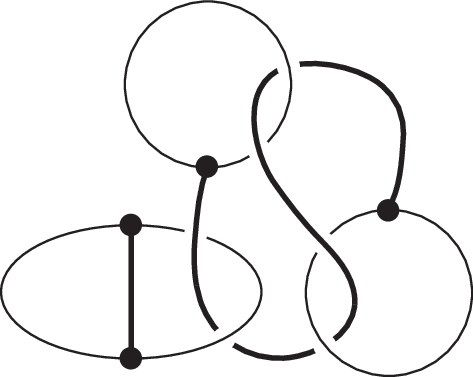}
\put(-7,50){$\psi(G)$} 
\put(6,15){\footnotesize $\psi(t_1\!)$} 
\put(82,64){\footnotesize $\psi(t_2)$}
\end{overpic}}
\hspace{0.2cm}\longrightarrow\hspace{0.2cm}
\raisebox{-33 pt}{\begin{overpic}[width=100pt]{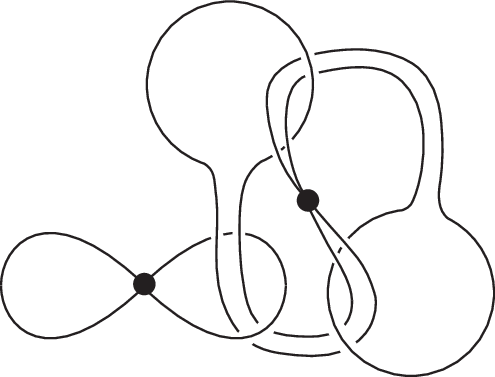}
\put(-7,50){$\psi(G)_T$} 
\end{overpic}}
$$ 
\vspace{0.0cm}
\caption{A contraction of $\psi(G)$ to bouquet graphs $\psi(G)_T$.}\label{contraction} 
\end{figure}

\begin{proposition} \label{bouquet}
For a graph $G$ and its spanning trees $T$, two spatial graphs $\psi_1(G)$ and $\psi_2(G)$
are component-homotopic if and only if $\psi_1(G)_T$ and $\psi_2(G)_T$ are component-homotopic. 
\end{proposition}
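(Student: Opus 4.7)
The plan is to make the contraction $\psi(G)\mapsto\psi(G)_T$ precise as a quotient by a disjoint union of 3-balls, and then show that both self-crossing changes and ambient isotopies transfer back and forth between the two sides. For each component $i$, let $N_i$ be a regular neighborhood in $S^3$ of $\psi(t_i)$; since $t_i$ is a tree, $N_i$ is a 3-ball meeting the rest of $\psi(G)$ only in arcs of the $i$-th component that emerge from the valence-one vertices of $t_i$. The quotient $S^3/\bigsqcup_i N_i$ is homeomorphic to $S^3$, and the image of $\psi(G)$ is exactly the spatial B-graph $\psi(G)_T$, with each non-tree edge of the $i$-th component of $\psi(G)$ becoming a loop at the point $[N_i]$.

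For the forward direction, suppose $\psi_1(G)$ and $\psi_2(G)$ are component-homotopic via a finite sequence of ambient isotopies and self-crossing changes, passing through intermediate spatial graphs $\phi_0(G),\phi_1(G),\dots,\phi_r(G)$. Choose the neighborhoods $N_i(\phi_s)$ of $\phi_s(t_i)$ uniformly small enough that at each stage their interiors are disjoint from the support of the adjacent crossing change (this is possible by the openness of crossing change supports and by shrinking the tree neighborhoods). By the isotopy extension theorem one may follow the neighborhoods along each ambient isotopy, and by general position one may assume every self-crossing change has support disjoint from all the $N_i$. Passing to the quotient then turns each ambient isotopy into an ambient isotopy of $S^3$, and each self-crossing change into a self-crossing change of the corresponding bouquet component (since the two offending arcs still lie in edges of the same $i$-th component, hence in the same bouquet after collapse). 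This yields a component-homotopy from $\psi_1(G)_T$ to $\psi_2(G)_T$.

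For the reverse direction, fix once and for all a standard ball-model for each bouquet vertex: an embedding $\tau_i$ of $t_i$ into a 3-ball $B_i$ with prescribed boundary points corresponding to where the loops of the bouquet re-enter. Given a component-homotopy $\psi_1(G)_T\leadsto\psi_2(G)_T$ through spatial B-graphs, we may assume (after a small isotopy) that throughout the homotopy each bouquet vertex has a fixed small ball neighborhood used as its collapse-model, and that all self-crossing changes take place outside these balls. Replacing the vertex inside each ball by the model tree $\tau_i$ at every stage then lifts the homotopy to a component-homotopy of spatial $G$-graphs: ambient isotopies lift on the nose, and self-crossing changes lift because their supports lie outside the balls. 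The two endpoints of this lift agree with $\psi_k(G)$ only up to the choice of how one uncontracts the vertex inside $B_i$; the last step of the argument is to show that any two embeddings of the spanning tree $t_i$ into a 3-ball sending the free ends to the same prescribed boundary points are related by a sequence of self-crossing changes and ambient isotopies rel boundary. This follows from the standard fact that a spatial tree with fixed endpoints on $\partial B^3$ can be unknotted by self-crossing changes, reducing any two such embeddings to a unique model up to ambient isotopy rel boundary. Thus the lift realizes a component-homotopy between $\psi_1(G)$ and $\psi_2(G)$.

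The main technical obstacle is the last step: controlling the nonuniqueness of the uncontraction. Establishing that any two tree-embeddings in a ball with fixed boundary behavior are component-homotopic rel boundary is what pins the argument down and is where care is needed; once this is in hand, both directions follow by routine tracking of neighborhoods through the homotopy.
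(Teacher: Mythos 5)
Your reverse direction (lifting a component-homotopy of the B-graphs by re-inserting a model tree in a ball, and then disposing of the ambiguity of the uncontraction via the fact that any two embeddings of a tree in a $3$-ball with fixed boundary points are related by self-crossing changes and isotopy rel boundary) is sound and is essentially the paper's argument, with the uniqueness-up-to-component-homotopy of the expansion spelled out more carefully than in the paper. The problem is in the other direction.

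There you claim that, after shrinking the regular neighborhoods $N_i$ of the trees $\phi_s(t_i)$ and invoking general position, every self-crossing change in the given homotopy from $\psi_1(G)$ to $\psi_2(G)$ can be assumed to have support disjoint from all the $N_i$. This fails exactly in the key case: a self-crossing change may involve an arc that lies \emph{on} a tree edge $\phi_s(t_i)$ (crossing changes between a tree edge and another edge of the same component, or between two tree edges, are perfectly legitimate moves of component-homotopy). In that case the supporting ball contains an arc of $\phi_s(t_i)$, so its interior meets every neighborhood of the tree, no matter how small; no shrinking or general-position perturbation removes the intersection, and replacing the given homotopy by one whose crossing changes avoid the trees is not justified (it is essentially what needs to be proved). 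The paper's proof treats this case explicitly: after contracting $\phi_i(T)$, the arc of the tree edge inside the supporting ball $\beta_i$ becomes a family of parallel arcs of loops of the bouquet, all belonging to the same component, so the single crossing change upstairs descends to \emph{several} self-crossing changes of $\phi_i(G)_T$ inside $\beta_i$ (and similarly when both strands are tree arcs). Your argument as written only covers the easy case where neither strand lies on a tree, so this direction has a genuine gap; it can be repaired by adding the parallel-arcs analysis above rather than trying to push the supports off the trees.
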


\begin{proof}
Assume that $\psi_1(G)_T$ and $\psi_2(G)_T$ are component-homotopic. 
Then there is a sequence $B_i$ ($1\leq i\leq k$) of B-graphs, where $B_1=\psi_1(G)_T$, $B_k=\psi_2(G)_T$ and $B_{i+1}$ is obtained from $B_i$ by a self-crossing change. For each $B_i$, by expanding the vertex to an embedding of $T$, we have a spatial graph $B_{i}^T$ which is an embedding of $G$. Note that $B_{i}^T$ is uniquely determined from $B_{i}$ and $T$ up to component-homotopy. The sequence $B^T_i$ ($1\leq i\leq k$) satisfies that $B_1^T=\psi_1(G)$, $B_k^T=\psi_2(G)$ and $B_{i+1}^T$ is component-homotopic to $B_{i}^T$. Then $\psi_1(G)$ and $\psi_2(G)$ are component-homotopic. 
\par
On the other hand, if $\psi_1(G)$ and $\psi_2(G)$ are component-homotopic, there is a sequence $\phi_i(G)$ ($1\leq i\leq k$) of spatial embeddings of $G$, where $\phi_1(G)=\psi_1(G)$, $\phi_k(G)=\psi_2(G)$ and $\phi_{i+1}(G)$ is obtained from $\phi_{i}(G)$ by a self-crossing change.  
By contracting $\phi_i(T)$ to a vertex for each $i$, we obtain a sequence $\phi_i(G)_T$ of B-graphs satisfying $\phi_1(G)_T=\psi_1(G)_T$ and $\phi_k(G)_T=\psi_2(G)_T$. 
Let $\beta_i$ be a supporting ball of the self-crossing change from $\phi_i(G)$ to $\phi_{i+1}(G)$. Assume the crossing change is done between edges $\phi_i(e)$ and $\phi_i(e')$.
If both $\phi_i(e)$ and $\phi_i(e')$ are not the edges of $\phi_i(T)$, after the contraction of $\phi_i(T)$, $\phi_{i+1}(G)_T$ is obtained from $\phi_{i}(G)_T$ by a self-crossing change at $\beta_i$. If $\phi_i(e)$ belongs to $\phi_i(T)$ and $\phi_i(e')$ do not, by the contraction of $\phi_i(T)$, 
the arc of $\phi_i(e)$ in $\beta_i$ changes to parallel arcs of edges of $\phi_i(G)_T$, see Figure \ref{self-crossing}. 
Then $\phi_{i+1}(G)_T$ is obtained from $\phi_{i}(G)_T$ by several self-crossing changes at $\beta_i$. 

\begin{figure}[h]
$$
\raisebox{-44 pt}{\begin{overpic}[width=60pt]{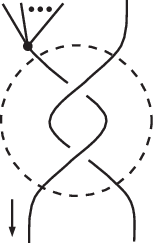}
\put(60,64){$\beta_i$} \put(17,-16){$\phi_i(G)$}
\put(42,45){\colorbox{white}{\footnotesize$\phi_i(e)$}} 
\put(-11.5,45){\colorbox{white}{\footnotesize$\phi_i(e')$}} 
\put(-57,8){contraction}
\end{overpic}} 
\hspace{0.7cm}
\longrightarrow
\hspace{0.7cm}
\raisebox{-44 pt}{\begin{overpic}[width=60pt]{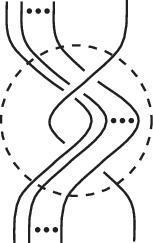}
\put(60,64){$\beta_i$} \put(16,-16){$\phi_i(G)_T$}
%\put(-11.5,45){\colorbox{white}{\footnotesize$\phi_i(e')$}} 
\end{overpic}} 
\hspace{0.7cm}
\longrightarrow
\hspace{0.7cm}
\raisebox{-44 pt}{\begin{overpic}[width=60pt]{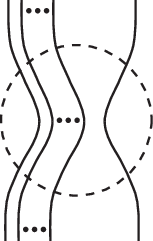}
\put(60,64){$\beta_i$} \put(11,-16){$\phi_{i+1}(G)_T$}
%\put(43.5,45){\colorbox{white}{\footnotesize$\phi_{i+1}(e')$}} 
\end{overpic}} 
$$ 
\vspace{0.1cm}
\caption{The supporting ball $\beta_i$ and the contraction of $\phi_i(T)$.}\label{self-crossing}
\end{figure}
For the other cases where only $\phi_i(e')$ belongs to $\phi_i(T)$ and both $\phi_i(e)$ and $\phi_i(e')$ belong to $\phi_i(T)$, similar to the above case, we have that $\phi_{i+1}(G)_T$ is component-homotopic to $\phi_{i}(G)_T$. 
Thus $\psi_1(G)_T$ is component-homotopic to $\psi_2(G)_T$. 
%Let $N_i$ be the regular neighborhood of $\phi_i(G)$. So $N_{i+1}$ is obtained from $N_i$ by a homotopy $F$ corresponding to the self-crossing change from $\phi_i(G)$ to $\phi_{i+1}(G)$. 
%We take the small regular neighborhood $M_i$ of $\phi_i(T)$, i.e. the ball, so that $M_i$ is included in $N_i$. 
%By contracting $\phi_i(T)$ to a vertex $v$ for each $i$, we obtain a sequence $\phi_i(G)_T$ of B-graphs satisfying $\phi_1(G)_T=\psi_1(G)_T$ and $\phi_k(G)_T=\psi_2(G)_T$. 
\if0
If $\psi_1(G)_T$ and $\psi_2(G)_T$ are component-homotopic, there is a sequence of ambient isotopies and self-crossing changes which transform $\psi_1(G)_T$ to $\psi_2(G)_T$. According to the transformation, $\psi_1(G)$ can be transformed to $\psi_2(G)$ by using self-crossing changes between the edges of $\psi_1(G)$ corresponding to those of $\psi_1(G)_T$. \\
On the other hand, if $\psi_1(G)$ and $\psi_2(G)$ are component-homotopic, there is a sequence of ambient isotopies and self-crossing changes from $\psi_1(G)$ to $\psi_2(G)$. Let $N_i$ be regular neighborhoods of $\psi_i(G)$. According to the sequence of self-crossing changes we can transform $N_1$ to $N_2$ by replacing the self-crossing changes to exchanges of tubes which are parts of the regular neighborhoods.  
We contract the spanning tree $\psi_1(T)$ into a small enough ball $B$ in $N_1$ so that $B$ is disjoint from the tubes which are exchanged in the above sequence. Then the transformation form $N_1$ to $N_2$ induces a sequence of self-crossing changes which transforms $\psi_1(G)$ to $\psi_2(G)$ and the self-crossing changes are achieved between edges not belonging to $\psi_1(T)$. 
By regarding $B$ as a vertex, this sequence gives component-homotopy from $\psi_1(G)_T$ to $\psi_2(G)_T$. 
\fi
\end{proof}

For a graph $G$, if we fix a family $T$ of spanning trees of components, 
by Proposition \ref{bouquet}, the classification problem of the component-homotopy classes of spatial embeddings $\psi(G)$ of a graph $G$ 
is reduced to that of the component-homotopy classes of spatial B-graphs $\psi(G)_T$. So we focus to classify $\mathcal{G}({\bm l})$ which is the set of component homotopy classes of spatial B-graphs. 

\begin{definition}
For $\sigma \in \mathcal{CH}({\bm l})$, a \textit{G-closure} $\hat{\sigma}$ of $\sigma$ is a spatial B-graph obtained by connecting the endpoints of the sub-string link $\sigma_i$ colored by $i$ to a vertex for each $i$
as in Figure \ref{G-closure-fig}. 
\end{definition}

\if0
\begin{figure}[h]
$$
\raisebox{-20 pt}{\begin{overpic}[width=183pt]{CSL-elements-01-5-2.pdf}
\put(45,70){$\sigma$} 
\end{overpic}}
$$ 
\caption{G-closure $\hat{\sigma}$ of $\sigma$.}\label{G-closure-fig}
\end{figure}
\fi

\begin{figure}[h]
$$
\raisebox{-20 pt}{\begin{overpic}[width=187pt]{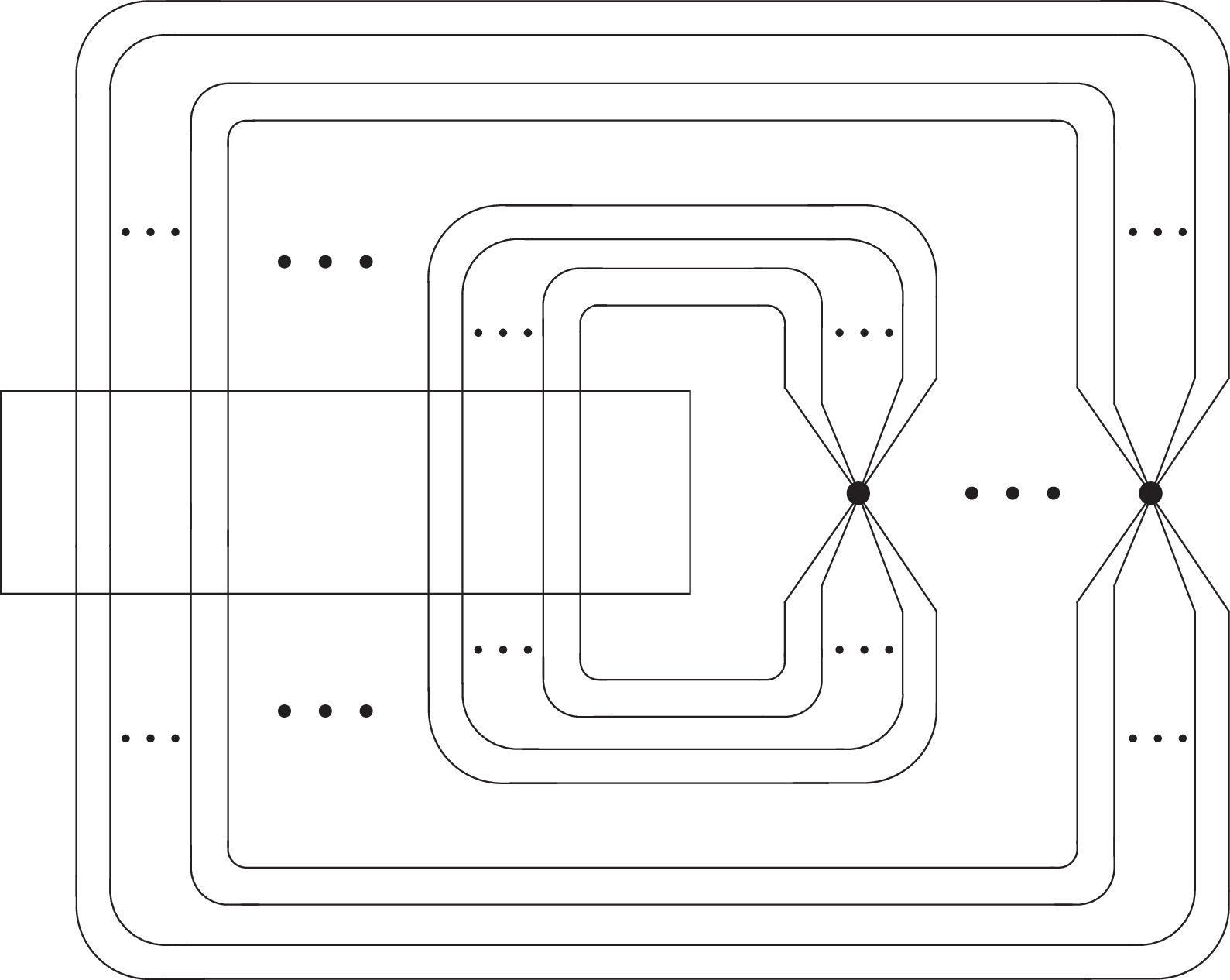}
\put(-15,70){\Large$\sigma$} 
\put(37,72){$\sigma_1$} 
\put(51,72){\tiny$\cdots$}
\put(89.5,72){$\sigma_m$} 
\end{overpic}}
$$ 
\caption{The G-closure $\hat{\sigma}$ of $\sigma$.}\label{G-closure-fig}
\end{figure}

\begin{definition}
For ${\bm l}=(1_{l_1},\dots,m_{l_m})$, let $G_0=G_0({\bm l})$ be a proper embedding of star graphs each of which has $2l_i$ edges to $D\times I$ whose endpoints are at the fixed points $p_{ij}$ in $D\times\{0\}$ and $D\times\{1\}$ as in Figure \ref{g-zero} left. For a colored string link $\Sigma$ with the component decomposition $\overline{\bm l}{\bm l}$, a \textit{$G_0$-cap} $\Sigma\cdot G_0$ of $\Sigma$ is the composition of $\Sigma$ and $G_0$ (Figure \ref{g-zero} right). 
\end{definition}

\begin{figure}[h]
$$
\raisebox{0 pt}{\begin{overpic}[width=180pt]{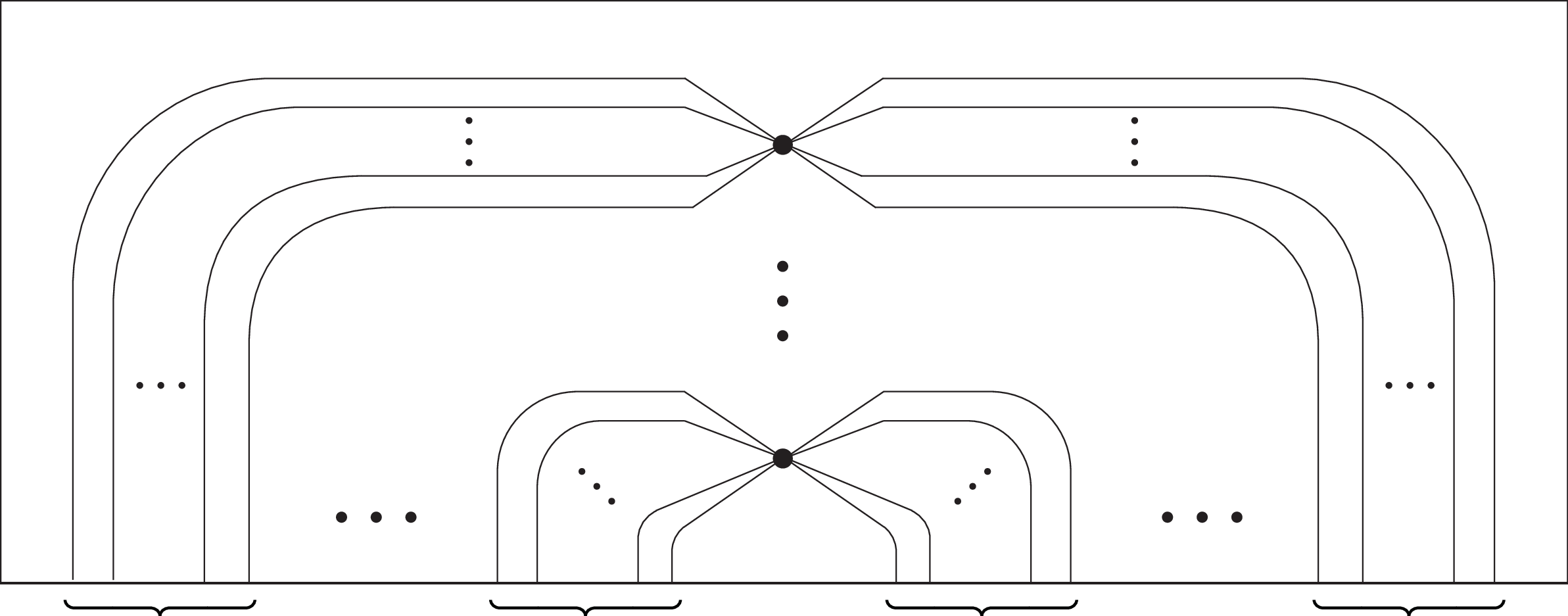}
\put(64,-13){$l_1$} 
%\put(25,-11){$\cdots$} 
\put(14,-13){$l_m$}
\put(110,-13){$l_1$} 
%\put(139,-11){$\cdots$} 
\put(157,-13){$l_m$}
\end{overpic}} 
\hspace{1.7cm}
\raisebox{-20 pt}{\begin{overpic}[width=180pt]{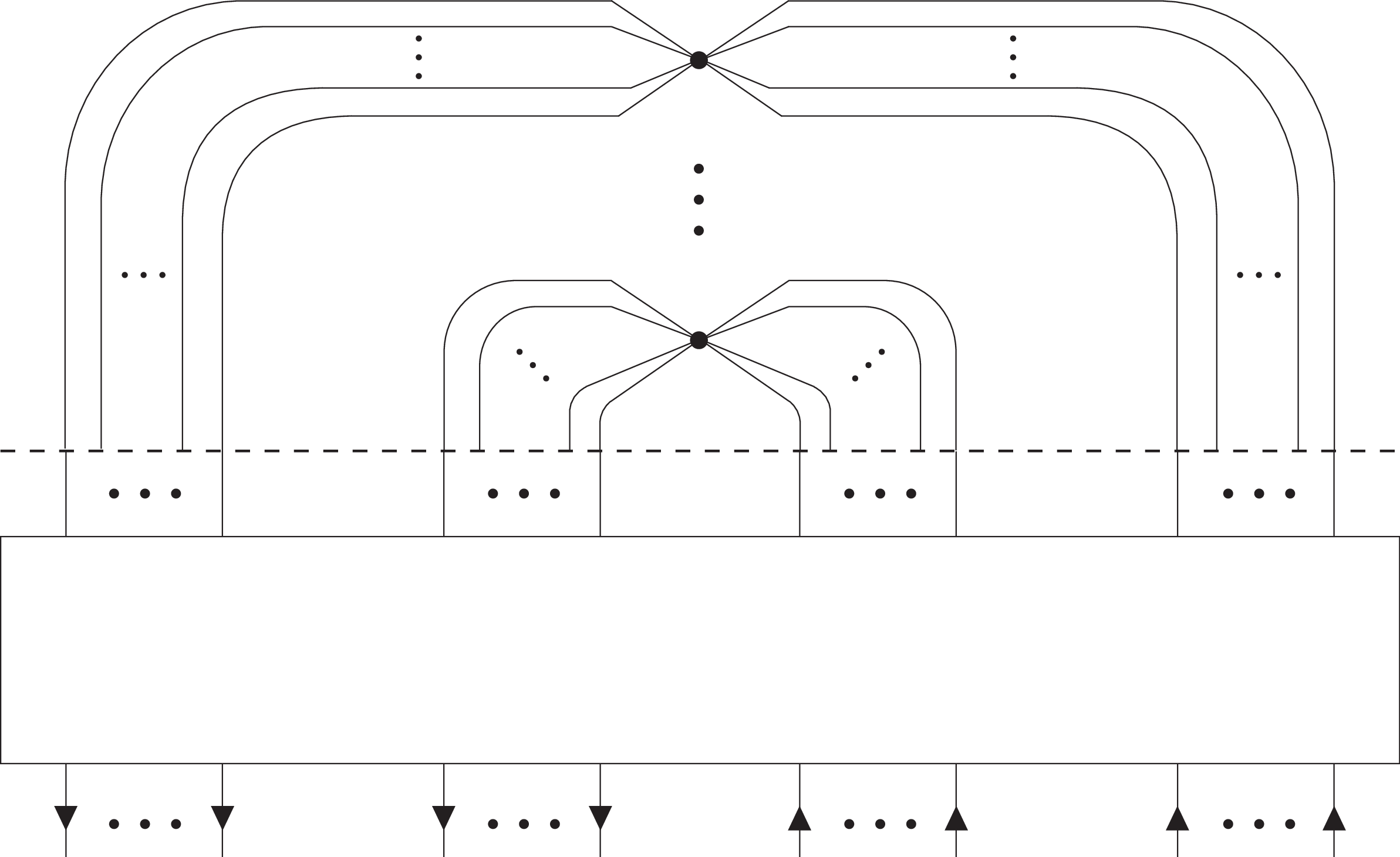}
\put(84,22){\large$\Sigma$}
\put(11,-10){$\sigma_{\overline{m}}$} \put(36,-11){$\cdots$} \put(63,-10){$\sigma_{\overline{1}}$}
\put(108,-10){$\sigma_1$} \put(130,-11){$\cdots$} \put(155,-10){$\sigma_{m}$}
\end{overpic}} 
$$ 
\vspace{0.1cm}
\caption{$G_0$ and $G_0$-cap of $\Sigma$.}\label{g-zero}
\end{figure}

The fundamental group $\pi_1(D^2\setminus G_0)$ has a presentation
$$\pi_1(D^2\setminus G_0)=\{x_{ij}, \overline{x}_{ij}\, ( 1\leq i\leq m, 1\leq j\leq l_i) \mid (\overline{X}_i)^{-1}X_i=1 \,(1\leq i\leq m)\}$$
where $X_i=\prod_{j=1}^{l_i}x_{ij}$ and $\overline{X}_i=\prod_{j=1}^{l_i}\overline{x}_{ij}$.

\par
%We show a Markov-type theorem for the colored string links and their G-closures. 
Let $\mathcal{S}_{G}({\bm l})$ be a subset of $\mathcal{CH}(\overline{\bm l}{\bm l})$ whose elements $\Sigma$ satisfy $\Sigma\cdot G_0=G_0$ up to component-homotopy relative to endpoints. Then $\mathcal{S}_{G}({\bm l})$ is a subgroup of $\mathcal{CH}(\overline{\bm l}{\bm l})$. 

\begin{definition}
For a spatial B-graph $\Gamma$, a ball $b$ satisfying that $(b,b\cap \Gamma)$ is isotopic to $(D\times I, G_0)$ is called a \textit{b-base} of $\Gamma$. A pair $(\Gamma, b)$ of a spatial B-graph $\Gamma$ and its b-base $b$ is called \textit{b-based} spatial B-graph. Two b-based spatial B-graphs $(\Gamma, b)$ and $(\Gamma', b')$ are \textit{component-homotopic} if there is a homotopy from $(\Gamma, b)$ to $(\Gamma', b')$ which changes $b$ to $b'$ isotopically and $\Gamma$ to $\Gamma'$ as a component-homotopy with the supporting balls of self-crossing changes disjoint from the images of $b$. 
\end{definition}

We easily have the following lemma. 
\begin{lemma}
For any spatial B-graph $\Gamma$, there is a colored string link $\sigma$ such that the G-closure $\widehat{\sigma}$ is ambient isotopic to $\Gamma$.
\end{lemma}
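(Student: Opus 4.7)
The plan is to produce a PL $3$-ball $B\subset S^{3}$ such that the pair $(B,B\cap\Gamma)$ is PL-homeomorphic to $(D^{2}\times I,G_{0}({\bm l}))$ with the endpoint labels matching. Once this is achieved, the closure ball $B^{c}=\overline{S^{3}\setminus B}$ is also a PL $3$-ball, and identifying it with $D^{2}\times I$ so that the induced labels on $\partial B^{c}=\partial B$ agree with the $p_{ij}$ of $G_{0}$, the intersection $\sigma:=\Gamma\cap B^{c}$ becomes a properly embedded union of arcs whose $(i,j)$-th arc runs between the two copies of $p_{ij}$. By construction $\sigma$ is a colored string link with component decomposition ${\bm l}$, and reassembling $B^{c}$ and $B$ along $\partial B$ exhibits $\Gamma$ as the G-closure $\widehat{\sigma}$.

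The construction of $B$ proceeds in two steps. First, around each vertex $v_{i}$ of $\Gamma$, take a small PL ball $N_{i}$ so that the $N_{i}$ are pairwise disjoint; by general position we may assume $(N_{i},N_{i}\cap\Gamma)$ is a standard cone with apex $v_{i}$ and $2l_{i}$ straight arcs to $\partial N_{i}$, since each of the $l_{i}$ loops at $v_{i}$ enters $v_{i}$ twice. Second, use an ambient isotopy of $S^{3}$ to transport the balls $N_{1},\dots,N_{m}$ into a standard row of positions inside a chosen PL ball $B\cong D^{2}\times I$, simultaneously carrying the $2l_{i}$ arc-ends on $\partial N_{i}$ onto the prescribed puncture points $p_{ij}\in D^{2}\times\{0,1\}$ that appear in the definition of $G_{0}$. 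After this isotopy, the pair $(B,B\cap\Gamma)$ is by definition a copy of $(D^{2}\times I,G_{0})$, i.e.\ $B$ is a b-base for (the isotoped copy of) $\Gamma$.

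The only step that requires care is the second one: the ambient isotopy moving the cones into the standard positions must be defined on all of $S^{3}$, not merely on a neighborhood of the vertex set, since the rest of $\Gamma$ must be dragged along consistently. This is handled in the usual PL manner: any two finite labelled configurations of disjoint PL $3$-balls with prescribed boundary marked points in $S^{3}$ are ambient isotopic via an isotopy of $S^{3}$, and the isotopy extends to the interiors because each $(N_{i},N_{i}\cap\Gamma)$ and its target are both standard cones, which are unique up to PL homeomorphism of the ball fixing the boundary point pattern. The arcs of $\Gamma$ outside $\bigcup N_{i}$ are then automatically carried by this ambient isotopy into $B^{c}$, and their endpoints land on the desired $p_{ij}$. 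This produces the colored string link $\sigma$ with $\widehat{\sigma}$ ambient isotopic to $\Gamma$, as required.
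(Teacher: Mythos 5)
The paper offers no written argument for this lemma (it is stated as an easy observation), so the only question is whether your argument is sound. Your overall strategy --- locate a b-base, i.e.\ a ball $B$ with $(B,B\cap\Gamma)\cong(D^2\times I,G_0)$, and read off $\sigma$ from the complementary ball --- is exactly the natural one. However, the final step contains a genuine gap: you build an ambient isotopy of $S^3$ that carries the cone neighborhoods $N_1,\dots,N_m$ onto standard positions inside a ball $B$ that was chosen \emph{in advance}, and then assert that ``the arcs of $\Gamma$ outside $\bigcup N_i$ are automatically carried by this ambient isotopy into $B^c$.'' Nothing guarantees this. The isotopy only controls the balls $N_i$; since the $N_i$ are pairwise disjoint, their images cannot fill $B$ (adjacent target slabs would have to share faces), so $B\setminus\bigcup h_1(N_i)$ is a nonempty region through which strands of $h_1(\Gamma)$ may well pass after being dragged across $S^3$. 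In that case $(B,B\cap h_1(\Gamma))$ is not $(D^2\times I,G_0)$ and $\Gamma\cap B^c$ is not a string link, so the conclusion does not follow as written. Note also that one cannot simply push the stray strands out of $B$ afterwards while keeping the stars fixed: a strand can clasp a leg of a star together with its continuation outside $B$, and removing it would require a crossing change.

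The standard repair is to reverse the order of operations: build the ball \emph{around} $\Gamma$ in its given position, and only then standardize by moves supported where they cannot drag outside strands in. Concretely, take the cone neighborhoods $N_i$ of the vertices, join them by arcs $\alpha_1,\dots,\alpha_{m-1}$ whose interiors miss $\Gamma\cup\bigcup N_i$ (general position, since $\Gamma$ is $1$-dimensional), and let $b$ be a thin regular neighborhood of $\bigcup N_i\cup\bigcup\alpha_j$; then by construction $b\cap\Gamma$ is exactly the union of the cones and $(b,b\cap\Gamma)\cong(D^2\times I,G_0)$, where the identification is chosen so that the two legs of each loop go to the top and bottom copies of the same $p_{ij}$, matching labels and orientations (this pairing is the small point your write-up also leaves implicit). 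Now use uniqueness of PL $3$-balls in $S^3$ to ambient-isotope $b$ onto the standard cylinder, and finish with an ambient isotopy supported in a collar of its boundary (to put the feet at the $p_{ij}$) and inside the ball (Alexander's trick, carrying the trivially embedded stars onto $G_0$ rel boundary). Because these last adjustments are supported in the ball and a collar, they cannot pull the rest of $\Gamma$ into it, and the complementary ball then exhibits the desired colored string link $\sigma$ with $\widehat{\sigma}$ ambient isotopic to $\Gamma$.
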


For a b-based spatial B-graph $(\Gamma,b)$, we can transform the b-base $b$ to $D\times I$ (Figure \ref{b-disc} middle). 
By contracting the length of $I$ too small and treating $D\times I$ as $D$, we can treat the b-based spatial B-graph $(\Gamma, b)$ as a b-based colored link $(L,D)$, where $L$ is a link obtained from $\Gamma\setminus b$  by identifying the boundaries of each component, see Figure \ref{b-disc} right. 
%This correspondence induces a bijection between the set of the d-based colored links and the set of the b-based spatial B-graphs. 

%$b$ as a disc intersecting a (colored) link transversely  

\begin{figure}[h]
$$
\raisebox{-50 pt}{\begin{overpic}[width=120pt]{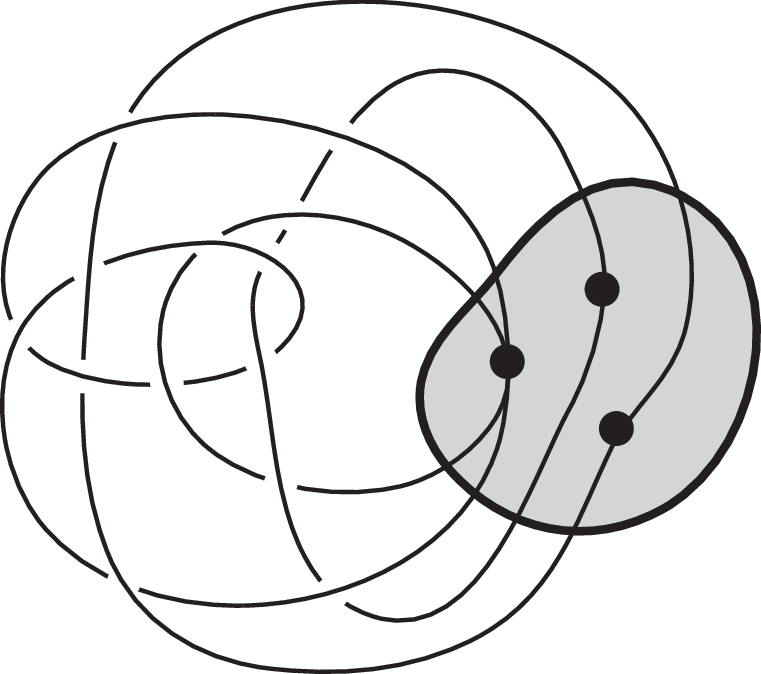}
\put(110,16){$b$} \put(4,90){$\Gamma$} 
\end{overpic}}
\quad\rightarrow\quad
\raisebox{-50 pt}{\begin{overpic}[width=120pt]{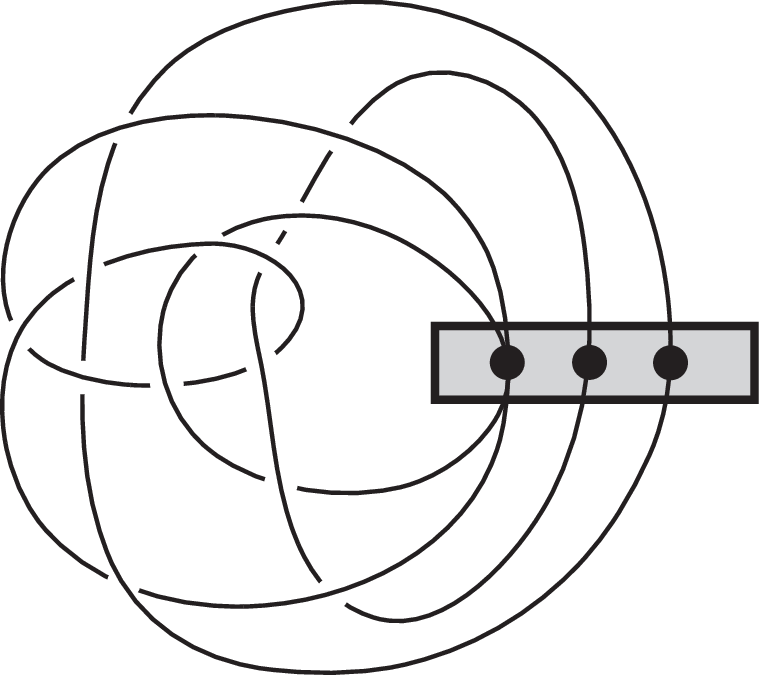}
\put(111,32){$b$} \put(4,90){$\Gamma$} 
\end{overpic}}
\quad\rightarrow\quad
\raisebox{-50 pt}{\begin{overpic}[width=120pt]{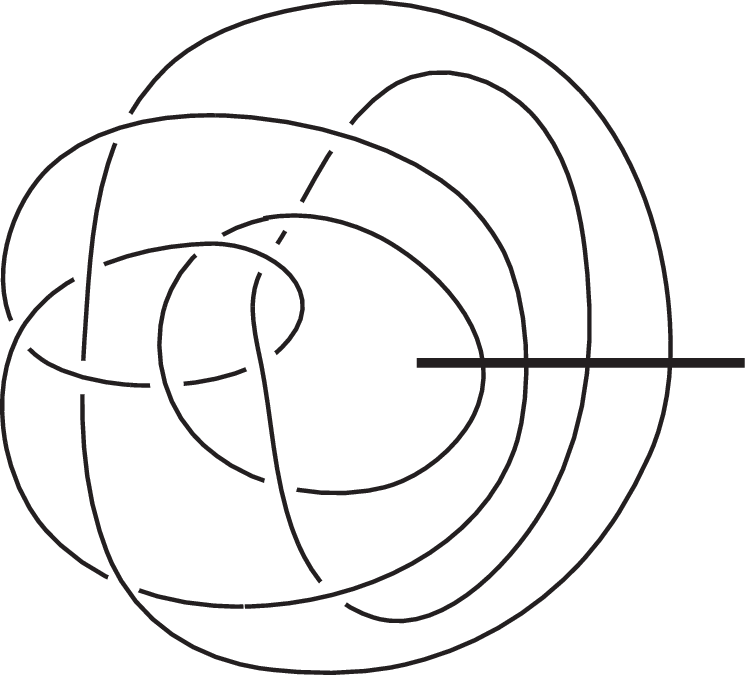}
\put(111,55){$D$} \put(4,90){$L$} 
\end{overpic}}
$$ 
\vspace{0.2cm}
\caption{A b-based spatial B-graph $(\Gamma,b)$ and a corresponding d-based link $(L,D)$.}\label{b-disc}
\end{figure}

\if0
Then we can quote the lemmas in \cite{HL} for a \textit{d-base} of a link as those for a b-base as follows.  

\begin{lemma} \label{b-base-lemma}
Suppose two spatial B-graphs $\Gamma$ and $\Gamma'$ are component-homotopic and let $b$ be a b-base of $\Gamma$. Then there is b-base $b'$ of $\Gamma'$ such that $(\Gamma,b)$ is component-homotopic to $(\Gamma',b')$.   
\end{lemma}
\fi

Then we have a Markov-type theorem for the component-homotopy classes of spatial B-graphs. 

\begin{theorem}\label{Markov-type-for-Bgraph}
Let $\sigma$ and $\sigma'$ be in $\mathcal{CH}({\bm l})$. Then the G-closures of $\sigma$ and $\sigma'$ are component-homotopic if and only if there is an element $s\in\mathcal{S}_{G}({\bm l})$ such that $s\cdot\sigma=\sigma'$. 
\end{theorem}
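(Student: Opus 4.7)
The plan is to mimic the proof of Theorem~\ref{Markov-type-for-CL} (which itself parallels the Habegger--Lin argument for Theorem~\ref{Lclassify}), with the d-bases of colored links replaced everywhere by the b-bases of spatial B-graphs introduced just above. The correspondence between b-based spatial B-graphs and colored string links plays the same bookkeeping role as the correspondence $\sigma\mapsto(\widehat\sigma,D)$ did in the CL-homotopy setting, while $G_0$ plays the role that the trivial string link $\mathbf 1$ played there.

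For the ``if'' direction, suppose $\sigma'=s\cdot\sigma$ for some $s\in\mathcal{S}_{G}({\bm l})$. I would realize $\widehat{s\cdot\sigma}$ by placing $s\cdot\sigma$ inside a cylinder and then attaching, for each color $i$, a bouquet vertex to the endpoints of the $i$-colored sub-string link at the top and the bottom. Sliding the layer that contains $s$ together with its adjacent bouquet vertices into a single sub-region, this sub-region is precisely a copy of the $G_0$-cap $s\cdot G_0$ glued to the remaining strings of $\sigma$. By the defining property of $\mathcal{S}_{G}({\bm l})$, $s\cdot G_0$ is component-homotopic to $G_0$ relative to endpoints; performing this component-homotopy inside the sub-region (whose supporting balls do not touch the rest of $\sigma$) deforms $\widehat{s\cdot\sigma}$ into $\widehat{\sigma}$.

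For the ``only if'' direction, assume $\widehat\sigma$ and $\widehat{\sigma'}$ are component-homotopic and observe that each carries a canonical b-base coming from the cylinder $D\times I$ used to define the G-closure. The b-based analogue of \cite[Lemma~1.9]{HL}, whose proof is a general-position argument combined with pushing the supporting balls of self-crossing changes off of the b-base (exactly as in the link case, but using that the arcs of $G_0$ in the b-base are an unknotted, unlinked family of radial segments), lifts the component-homotopy to a component-homotopy of b-based spatial B-graphs in which the b-base itself moves by ambient isotopy. Tracking the boundary of the b-base through this ambient isotopy against the complement of the fixed spatial B-graph produces a colored string link $\Sigma\in\mathcal{CH}(\overline{\bm l}{\bm l})$; by construction $\Sigma\cdot\sigma=\sigma'$ in $\mathcal{CH}({\bm l})$. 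Moreover, applying the same ambient isotopy to the b-base itself (which is isotopic to $G_0$) must again return a b-base, so $\Sigma\cdot G_0$ is component-homotopic to $G_0$ relative to endpoints, i.e.\ $\Sigma\in\mathcal{S}_{G}({\bm l})$.

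The hard part will be the b-base lifting step in the reverse direction: one must verify that every ambient isotopy plus self-crossing change sequence between $\widehat\sigma$ and $\widehat{\sigma'}$ can be promoted to one which respects the b-base structure up to motion of the b-base, and that the resulting motion actually fixes $G_0$ up to component-homotopy rel endpoints. The subtlety beyond the link case is that self-crossing changes now occur between edges that share the bouquet vertex inside the b-base; localizing those crossing changes away from the b-base uses the fact that parallel radial arcs of the same color emanating from the central vertex of $G_0$ can be freely permuted by component-homotopy. Once this geometric lemma is in hand, the rest is a direct translation of the proof of Theorem~\ref{Markov-type-for-CL}.
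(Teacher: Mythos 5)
Your proposal follows the same route as the paper: the authors prove this theorem exactly by transplanting the Habegger--Lin argument (Theorem~2.9 of \cite{HL}, as adapted in Theorem~\ref{Markov-type-for-CL}) with the d-bases of colored links replaced by the b-bases of spatial B-graphs and $G_0$ playing the role of the trivial string link, which is precisely your plan, including the b-base lifting lemma you identify as the key step. Your outline is, if anything, more explicit than the paper's own proof, so there is nothing to correct.
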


\begin{proof}
This theorem is proved as an analogue of Theorem \ref{Markov-type-for-CL} (and Theorem 2.9 in \cite{HL}) by replacing the d-bases for colored links with the b-bases for spatial B-graphs. 
\end{proof}

\if0
\begin{proof}
Let $B$ be a ball and we decompose $B$ to 3 balls $B_0$, $B_+$ and $B_-$, where $B_0$ is $D^2\times I$ and $B_+$ and $B_-$ are upper and lower half balls respectively and they are glues to $B$ by identifying the disc in the boundary of $B_+$ with $D^2\times \{1\}$ of $B_0$ (resp. the disc in the boundary of $B_-$ with $D^2\times \{0\}$ of $B_0$), see Figure \ref{ball-configulation}. We identify $B_0$ with the cylinder of $\mathcal{H}(\overline{\bm l}{\bm l})$ and, for $\sigma\in \mathcal{H}({\bm l})$ and  
$\Sigma\in \mathcal{H}(\overline{\bm l}{\bm l})$, we take the composition of $\Sigma\cdot \sigma$ in $B_0\cup B_+$ as in Figure \ref{CLaction}. 
By inserting G-cap in $B_-$ upside down and composing with $\Sigma\cdot \sigma$, we have the G-closure of $\Sigma\cdot \sigma$. If $\Sigma={\bm 1}$, this procedure gives the G-closure of $\sigma$. 
\par
For $\sigma$ and $\sigma'\in \mathcal{H}({\bm l})$, suppose that there is $s\in \mathcal{S}_G({\bm l})$ such that $s\cdot \sigma = \sigma'$. The G-closure $\hat{\sigma'}=\hat{s\cdot \sigma}$ of $\sigma'$ is seen as the composition of $\sigma$ in $B_+$, $s$ in $B_0$ and $G_0$ in $B_-$. By seeing $B_-\cup B_0$ upside down, the composition $G_0$ and $s$ is seen as G-cap $G_0\cdot s^{-1}$ of $s^{-1}$. Since $s^{-1}\in \mathcal{S}_G({\bm l})$, $G_0\cdot s^{-1}=G_0$ and $\hat{\sigma'}=\hat{s\cdot \sigma}=\hat{\sigma}$. 
\par
We assume that G-closures $\hat{\sigma}$ and $\hat{\sigma'}$ of two colored string links $\sigma$ and $\sigma'$ are component-homotopic. The G-closure $\hat{\sigma}$ (resp. $\hat{\sigma'}$) is considered as a b-based spatial B-graph with a b-base $b$ (resp. $b'$). There is a component-homotopy which changes $\hat{\sigma}$ to $\hat{\sigma'}$. From Lemma \ref{b-base-lemma}, there is a b-base
$b''$ of $\hat{\sigma'}$ such that $(\hat{\sigma}, b)$ is component-homotopic to $(\hat{\sigma'}, b'')$. We study a relationship between $b'$ and $b''$. 
\par
We assume that $\hat{\sigma'}$ is in $B$ and decomposed into $\sigma'$ in $B_+$, ${\bm 1}$ in $B_0$ and $G_0$ in $B_-$ and treat $B_-$ as the b-base $b'$ of $\hat{\sigma'}$. Up to component-homotopy, we can assume that $\sigma'$ are pure braids. 

\begin{figure}[h]

\vspace{0.2cm}
\caption{$B_0$, $B_+$ and $B_-$.}\label{ball-configulation}
\end{figure}
\end{proof}
\fi

When we take the G-cap of $\Sigma\in \mathcal{CH}(\overline{\bm l}{\bm l})$, the sub-string links $\sigma_{\overline{i}}$ and $\sigma_i$ become the same component. 
Let $RCF^*(\overline{\bm l}{\bm l})$ be the quotient group of $RCF(\overline{\bm l}{\bm l})$ with relations such that conjugations of $x_{ij}$ commute with those of $\overline{x}_{ik}$. 
Let $\mathcal{CH}^*(\overline{\bm l}{\bm l})$ be the quotient group of $\mathcal{CH}(\overline{\bm l}{\bm l})$ with crossing changes between arcs of $(i,j)$-th component and those of ($\overline{i},k$)-th component. The component-homotopy class of the G-cap $\Sigma\cdot G_0$ of $\Sigma\in \mathcal{CH}(\overline{\bm l}{\bm l})$ is determined by the projection of $\Sigma$ to $\mathcal{CH}^*(\overline{\bm l}{\bm l})$. 
Then, corresponding to Theorem \ref{CSLclassify}, we have a split exact sequence
\begin{equation}
1\to RCF^*(\overline{\bm l_{m-1}}{\bm l}_{m-1})^{2l_m}\overset{}{\to} \mathcal{CH}^*(\overline{\bm l}{\bm l})\to \mathcal{CH}^*(\overline{\bm l_{m-1}}{\bm l}_{m-1})\to 1, \label{CSL2-split}
\end{equation}
where $\mathcal{CH}^*(\overline{\bm l}{\bm l})\to \mathcal{CH}^*(\overline{\bm l_{m-1}}{\bm l}_{m-1})$ is defined by omitting the sub-string links $\sigma_{m}$ and $\sigma_{\overline{m}}$, the split homomorphism $\mathcal{CH}^*(\overline{\bm l_{m-1}}{\bm l}_{m-1})\to \mathcal{CH}^*(\overline{\bm l}{\bm l})$ adds the two trivial sub-string links with $l_m$ components as $\sigma_{m}$ and $\sigma_{\overline{m}}$ and $RCF^*(\overline{\bm l_{m-1}}{\bm l}_{m-1})^{2l_m}\overset{}{\to} \mathcal{CH}^*(\overline{\bm l}{\bm l})$ adds $2l_m$ strings $\sigma_{m}$ and $\sigma_{\overline{m}}$ which are determined by an element of $RCF^*(\overline{\bm l_{m-1}}{\bm l}_{m-1})^{2l_m}$ to the trivial string link with the component decomposition $\overline{\bm l_{m-1}}{\bm l}_{m-1}$. 
\par
We also define a group $\mathcal{S}^*_G({\bm l})$ as  $\mathcal{S}_G({\bm l})$ modulo crossing changes between arcs of $(i,j)$-th component and those of ($\overline{i},k$)-th component. Consider the following split short exact sequence which is induced by (\ref{CSL2-split}).  
$$1\to\mathcal{K}_G\overset{\Psi}{\to} \mathcal{S}^*_G({\bm l}) \to \mathcal{S}^*_G({\bm l}_{m-1}) \to 1.$$
Note that $\mathcal{K}_G$ is a subgroup of $RCF^*(\overline{\bm l_{m-1}}{\bm l}_{m-1})^{2l_m}
=\{RCF^*(\overline{\bm l_{m-1}}{\bm l}_{m-1})^2\}^{l_m}$. 
Let $\mathcal{K}_0$ be the subgroup of $\mathcal{K}_G$ generated by 
the following elements
\begin{eqnarray*}
& & (x_{ij}, x_{ij})^{l_m},\quad (\overline{x}_{ij}, \overline{x}_{ij})^{l_m},\quad 
((1,1),\cdots,\underset{t}{(\overline{X}_i^{-1}X_i,1)},\cdots,(1,1)) \\ 
\mbox{and\!\!\!\!}& &((1,1),\cdots,\underset{t}{(1,\overline{X}_i^{-1}X_i)},\cdots,(1,1)) 
\end{eqnarray*}
$(1\leq i\leq m-1, 1\leq j\leq l_i, 1\leq t\leq l_m)$, where these elements are mapped to
\begin{eqnarray*}
\Psi((x_{ij}, x_{ij})^{l_m})=\prod_{t=1}^{l_m}(x_{ij},x_{ij})_{mt}, \,\,
\Psi((\overline{x}_{ij}, \overline{x}_{ij})^{l_m})=\prod_{t=1}^{l_m}(\overline{x}_{ij}, \overline{x}_{ij})_{mt},\\
\Psi(((1,1),\cdots,\underset{t}{(\overline{X}_i^{-1}X_i,1)},\cdots,(1,1)))
=(\overline{X}_i^{-1}X_i,1)_{mt}, \\
\Psi(((1,1),\cdots,\underset{t}{(1,\overline{X}_i^{-1}X_i)},\cdots,(1,1)))
=(1,\overline{X}_i^{-1}X_i)_{mt}.
\end{eqnarray*}

%We define $\eta: RCG(\overline{\bm l_{m-1}}{\bm l}_{m-1})^{2} \to RCG({\bm l}_{m-1})$ by $\eta(x_{ij})=\eta(x_{\overline{ij}})=x_{ij}$ $(1\leq i\leq m-1, 1\leq j\leq l_i)$. 

\begin{remark} \label{trivial-condition}
In the case of $l_m=1$,  
for $(f,g)\in RCF^*(\overline{\bm l_{m-1}}{\bm l}_{m-1})^{2}$, $(f,g)\in \mathcal{K}_G$ if and only if $gf^{-1}$ is a product of conjugations of $\overline{X}_i^{-1}X_i$ and their inverses. 
\end{remark}

\begin{proposition} $\mathcal{K}_0=\mathcal{K}_G.$
\end{proposition}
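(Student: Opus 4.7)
The plan is to prove the two inclusions separately, with the reverse direction being the substantive one. The inclusion $\mathcal{K}_0\subseteq\mathcal{K}_G$ is a direct verification that each listed generator is indeed an element of $\mathcal{K}_G$: the all-slot generators $(x_{ij},x_{ij})^{l_m}$ and $(\overline{x}_{ij},\overline{x}_{ij})^{l_m}$ have images under $\Psi$ of the form $\prod_{t=1}^{l_m}(x_{ij},x_{ij})_{mt}$ and $\prod_{t=1}^{l_m}(\overline{x}_{ij},\overline{x}_{ij})_{mt}$, describing coherent wrappings of every color-$m$ edge uniformly around the $(i,j)$-th component; such coherent wraps unwind at the vertex $v_m$ of $G_0$'s color-$m$ star using the freedom of component-homotopy, so the $G_0$-cap is preserved. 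The single-slot generators $((1,1),\dots,\underset{t}{(\overline{X}_i^{-1}X_i,1)},\dots,(1,1))$ and its $C_t$-analogue insert the vertex relation at $v_i$ into a single color-$m$ edge, which is null-homotopic in the B-graph complement.

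For the reverse inclusion $\mathcal{K}_G\subseteq\mathcal{K}_0$, the key step is to extend Remark \ref{trivial-condition} to arbitrary $l_m$, in the following form: $\Sigma=((f_t,g_t))_{t=1}^{l_m}$ lies in $\mathcal{K}_G$ if and only if there exists $f\in RCF^*(\overline{\bm l_{m-1}}\bm l_{m-1})$ such that $f^{-1}f_t,\,f^{-1}g_t\in N$ for every $t$, where $N$ denotes the normal closure of $\{\overline{X}_i^{-1}X_i:1\leq i\leq m-1\}$. The case $l_m=1$ is precisely Remark \ref{trivial-condition}; the general case requires a common-coset condition across slots because the $l_m$ color-$m$ edges share the single vertex $v_m$ in $G_0$, so their monodromies cannot be altered independently under component-homotopy.

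Given $\Sigma\in\mathcal{K}_G$ together with such an $f$, write $f_t=fu_t$ and $g_t=fv_t$ with $u_t,v_t\in N$, and decompose
\[
\Sigma=\Delta(f)\cdot\prod_{t=1}^{l_m}S_t(u_t,v_t),
\]
where $\Delta(f)=((f,f),\dots,(f,f))$ is the diagonal element and $S_t(u,v)$ is the single-slot element with $(u,v)$ in slot $t$ and identities elsewhere. The factor $\Delta(f)$ is expressible as a product of the generators $(x_{ij},x_{ij})^{l_m}$, $(\overline{x}_{ij},\overline{x}_{ij})^{l_m}$ and their inverses, since $f$ ranges over the entire group $RCF^*(\overline{\bm l_{m-1}}\bm l_{m-1})$ as $i,j$ vary. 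Each single-slot factor splits as $S_t(u_t,v_t)=S_t(u_t,1)\cdot S_t(1,v_t)$, and $S_t(u_t,1)$ is generated by the $B_t$-type generators together with conjugation by various $\Delta(h)$: a direct computation gives
\[
\Delta(h)\cdot((1,1),\dots,\underset{t}{(\overline{X}_i^{-1}X_i,1)},\dots,(1,1))\cdot\Delta(h)^{-1}=((1,1),\dots,\underset{t}{(h\overline{X}_i^{-1}X_ih^{-1},1)},\dots,(1,1)),
\]
so every conjugate of $\overline{X}_i^{-1}X_i$, hence every element of $N$, is realizable in the first coordinate at slot $t$; the second coordinate is handled analogously using the $C_t$-type generators.

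The principal obstacle is the extended characterization of $\mathcal{K}_G$ in the second paragraph, in particular the necessity of the common-coset condition as opposed to the weaker per-slot condition $g_tf_t^{-1}\in N$. Proving necessity requires a careful analysis of how the shared vertex $v_m$ obstructs independent modification of individual color-$m$ edges: a putative single-slot deviation from the coset cannot be cancelled at $v_m$ under component-homotopy without being compensated at another slot, and this rigidity is what forces all $f_t$ and $g_t$ to lie in a common coset modulo $N$.
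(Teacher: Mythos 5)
Your overall strategy coincides with the paper's: show that every element of $\mathcal{K}_G$ has all of its entries in a single coset of the normal closure $N$ of the elements $\overline{X}_i^{-1}X_i$, then decompose it as a diagonal element times single-slot elements, each of which is a product of the listed generators via the conjugation identity $\Delta(h)\,S_t(\overline{X}_i^{-1}X_i,1)\,\Delta(h)^{-1}=S_t(h\overline{X}_i^{-1}X_ih^{-1},1)$. This algebraic half of your argument is correct and is essentially the paper's computation (with $f=f_1$), including the reduction of $\Delta(f)$ to the diagonal generators $(x_{ij},x_{ij})^{l_m}$, $(\overline{x}_{ij},\overline{x}_{ij})^{l_m}$.

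The genuine gap is that the step you yourself call the principal obstacle --- the necessity of the common-coset condition for membership in $\mathcal{K}_G$ --- is only described heuristically (``a single-slot deviation cannot be cancelled at $v_m$ without being compensated at another slot'') and never proved; since the rest is routine, this is the actual content of the proposition. The paper proves it concretely: for $\sigma=((f_1,g_1),\dots,(f_{l_m},g_{l_m}))\in\mathcal{K}_G$ the cap $\Psi(\sigma)\cdot G_0$ is component-homotopic to $G_0$, and one reads off explicit loops in the complement of the subgraph $G_0^{(m-1)}$ obtained by deleting the $m$-th component: the loop running along the $\overline{m1}$-th and $m1$-th edges represents $g_1f_1^{-1}$, and the loops running along the $m1$-th and $mt$-th (resp. $\overline{m1}$-th and $\overline{mt}$-th) edges represent $g_tg_1^{-1}$ (resp. $f_tf_1^{-1}$). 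Because $G_0$ is isotopic to a planar embedding of star graphs, these loops are trivial in $\pi_1(D^2\times I\setminus G_0^{(m-1)})$, and since that group is presented on the meridians $x_{ij},\overline{x}_{ij}$ with defining relations exactly $\overline{X}_i^{-1}X_i=1$ $(1\le i\le m-1)$, triviality there is precisely the statement that these words lie in $N$. Your write-up needs this (or an equivalent) argument to be complete; note also that the ``if'' direction of your proposed characterization is unnecessary, since $\mathcal{K}_0\subseteq\mathcal{K}_G$ holds by definition ($\mathcal{K}_0$ is defined as a subgroup of $\mathcal{K}_G$), so only necessity has to be established.
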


\begin{proof}
First we prove the case $l_m=1$. In this case, we specially denote $\mathcal{K}_0$ by $\mathcal{K}_0^{(1)}$. 
Then $\mathcal{K}_G\subset RCF^*(\overline{\bm l_{m-1}}{\bm l}_{m-1})^{2}$ and $\mathcal{K}_0^{(1)}$ is generated by 
$$(x_{ij}, x_{ij}),\,\, (\overline{x}_{ij}, \overline{x}_{ij}),\,\, (\overline{X}_i^{-1}X_i,1),\,\, (1,\overline{X}_i^{-1}X_i) \qquad(1\leq i\leq m-1, 1\leq j\leq l_i).$$
%We can check that $(X_i,\overline{X}_i)$, $(\overline{X}_i, X_i)$ are also in $\mathcal{K}_0^{(1)}$. 
Then we have $(1, h\overline{X}_i^{-1}X_i h^{-1})=(h,h)(1, \overline{X}_i^{-1}X_i)(h^{-1},h^{-1})\in \mathcal{K}_0^{(1)}$ for $h\in RCF^*(\overline{\bm l_{m-1}}{\bm l}_{m-1})$. 
\par
From Remark \ref{trivial-condition}, for $(f,g)\in \mathcal{K}_G$, $gf^{-1}$ is a product of conjugations of $\overline{X}_i^{-1}X_i$ and 
$(f,g)=(1,gf^{-1})(f,f)\in \mathcal{K}_0^{(1)}$. Therefore $\mathcal{K}_0^{(1)}\supset\mathcal{K}_G$ and $\mathcal{K}_0^{(1)}=\mathcal{K}_G$. 
\par
We prove the case $l_m>1$. 
For an element
$\sigma=((f_1,g_1), (f_2,g_2),\dots, (f_{m_l},g_{m_l}))\in \mathcal{K}_G$, 
the G-cap $\Psi(\sigma)\cdot G_0$ of the image $\Psi(\sigma)$ is component-homotopic to $G_0\subset D^2\times I$. Let $G_0^{(m-1)}\subset D^2\times I$ be the sub-spatial graph of $G_0$ obtained by omitting the $m$-th component of $G_0$. 
Let $\lambda$ be the element of the fundamental group $\pi_1(D^2\times I \setminus G_0^{(m-1)})$ which is the loop 
consisting of the segment connecting the basepoint $b$ and $p_{m1}$, the one connecting $b$ and $p_{\overline{m1}}$
and $m1$-th and $\overline{m1}$-th edges of $G_0$ with the same orientation to the edges. 
Since $G_0$ is isotopic to a plane embedding of star graphs, $\lambda=g_1f^{-1}_1$ is trivial in $\pi_1(D^2\times I \setminus G_0^{(m-1)})$. 
Thus $g_1f_1^{-1}$ is a product of conjugations of $\overline{X}_i^{-1}X_i$ and their inverses and $(f_1,g_1)\in \mathcal{K}_0^{(1)}$. 
\par
Let $\lambda_{1t}$ (resp. $\lambda_{\overline{1t}}$) be the element of $\pi_1(D^2\times I \setminus G_0^{(m-1)})$ which is the loop 
consisting of the segment connecting the basepoint $b$ and $p_{m1}$ (resp. $p_{\overline{m1}}$), the one connecting $b$ and $p_{\overline{mt}}$ (resp. $p_{\overline{mt}}$)
and $m1$-th and $mt$-th edges (resp. $\overline{m1}$-th and $\overline{mt}$-th) of $G_0$ with the same orientation to the $mt$-th (resp. $\overline{mt}$-th) edge. The loop $\lambda_{1t}$ is trivial as an element of $\pi_1(D^2\times I \setminus G_0^{(m-1)})$. 
Thus there exists a product $k_t$ of conjugations of $\overline{X}_i^{-1}X_i$ and their inverses such that $g_{t}g_{1}^{-1}=k_t$ for every $t$. 
Similarly, since $\lambda_{\overline{1t}}$ is trivial in $\pi_1(D^2\times I \setminus G_0^{(m-1)})$, there is a product $k_{\overline{t}}$ of conjugations of $\overline{X}_i^{-1}X_i$ and their inverses such that $f_{t}f_{1}^{-1}=k_{\overline{t}}$ for every $t$. Then
$$\sigma=((f_1,g_1), (k_{\overline{2}}f_1,k_{2}g_1),\dots, (k_{\overline{m-1}}f_1,k_{m-1}g_1))\in \mathcal{K}_G.$$
Similar to the discussion in the case $l_m=1$, 
\if0
$$((1,1),\cdots,\underset{t}{(1,\overline{X}_i^{-1}X_i)},
\cdots,(1,1))\in \mathcal{K}_0$$
for any $t$ ($1\leq t\leq m-1$) and
\fi
\begin{eqnarray*}
& &((h,h),\cdots,(h,h))
\cdot ((1,1),\cdots,\underset{t}{(1,\overline{X}_i^{-1}X_i)},
\cdots,(1,1))
\cdot
((h^{-1},h^{-1}),\cdots,(h^{-1},h^{-1}))\\
&=&((1,1),\cdots,\underset{t}{(1,h \overline{X}_i^{-1}X_ih^{-1})},
\cdots,(1,1))\in \mathcal{K}_0. 
\end{eqnarray*}
for $h\in RCF^*(\overline{\bm l_{m-1}}{\bm l}_{m-1})$. 
Similarly, $((1,1),\cdots,\underset{t}{(h \overline{X}_i^{-1}X_ih^{-1},1)},
\cdots,(1,1))\in \mathcal{K}_0$. Then
$$\sigma=((1,1),(k_{\overline{2}},k_2),\dots, (k_{\overline{m-1}},k_{m-1}))
\cdot((1,g_1f_1^{-1}),\cdots,(1,g_1f_1^{-1}))
\cdot((f_1,f_1),\cdots,(f_1,f_1))$$
is in $\mathcal{K}_0$ and $\mathcal{K}_0=\mathcal{K}_G$. 
\end{proof}

\if0
\begin{proposition}
$\mathcal{K}_G$ is generated by elements $$\displaystyle (\,(\overline{x}_{ij}, \overline{x}_{ij}), \dots, (\overline{x}_{ij}, \overline{x}_{ij})\,)\in \{RCF^*(\overline{\bm l_{m-1}}{\bm l}_{m-1})^{2}\}^{l_m}, \quad (1\leq i \leq m-1, 1\leq j\leq l_i).$$
\end{proposition}
\fi

From the decomposition $\mathcal{H}({\bm l})=\mathcal{H}({\bm l}_{m-1})\ltimes RCF({\bm l}_{m-1})^{l_m}$ in Theorem \ref{CSLclassify}, $\sigma\in \mathcal{H}({\bm l})$ is presented as $\sigma=\theta{\bm g}$, where $\theta\in \mathcal{H}({\bm l}_{m-1})$ and ${\bm g}=(g_1, \dots, g_{l_m})\in RCF({\bm l}_{m-1})^{l_m}$. Then the actions of $(\overline{x}_{ij}, \overline{x}_{ij})^{l_m}$ and $(x_{ij}, x_{ij})^{l_m}$ are 
$$(\overline{x}_{ij}, \overline{x}_{ij})^{l_m}\cdot\theta{\bm g}=\theta (g_1^{x_{ij}}, \dots, g_{l_m}^{x_{ij}}), \quad
(x_{ij}, x_{ij})^{l_m}\cdot\theta{\bm g}=\theta (g_1^{\theta^{-1}x_{ij}\theta}, \dots, g_{l_m}^{\theta^{-1}x_{ij}\theta}).$$ 
So the actions of $(x_{ij}, x_{ij})^{l_m}$ are generated by those of $(\overline{x}_{ij}, \overline{x}_{ij})^{l_m}$. 
\par
The decomposition (\ref{CLdecomposition}) is done with respect to the $m$-colored components. We can do a similar decomposition with respect to the $i$-colored component and have a subgroup of $\mathcal{S}^*_G({\bm l})$ which corresponds to $\mathcal{K}_G$. The union of generators of such subgroups generates $\mathcal{S}^*_G({\bm l})$. So $\mathcal{S}^*_G({\bm l})$ is generated by 
$\displaystyle (\overline{x}_{st}, \overline{x}_{st})_i:=\prod^{l_i}_{j=1}(\overline{x}_{st}, \overline{x}_{st})_{ij}$ , $(\overline{X}_s^{-1}X_s,1)_{ij}$ and 
$(\overline{X}_s^{-1}X_s,1)_{ij}$ \,\,
($1\leq i, s \leq m, i\neq s, 1\leq i\leq l_j, 1\leq t\leq l_s$)
%$\displaystyle (\overline{X}_s,X_s)_{ij}:= \prod_{t=1}^{l_{s}} (\overline{x}_{st}, x_{st})_{ij}$ ($1\leq i, s \leq m, i\neq s, 1\leq j\leq l_i$), 
 (see Figure \ref{Ggenerator}). 
\begin{figure}[h]
$$
\raisebox{-20 pt}{\begin{overpic}[width=280pt]{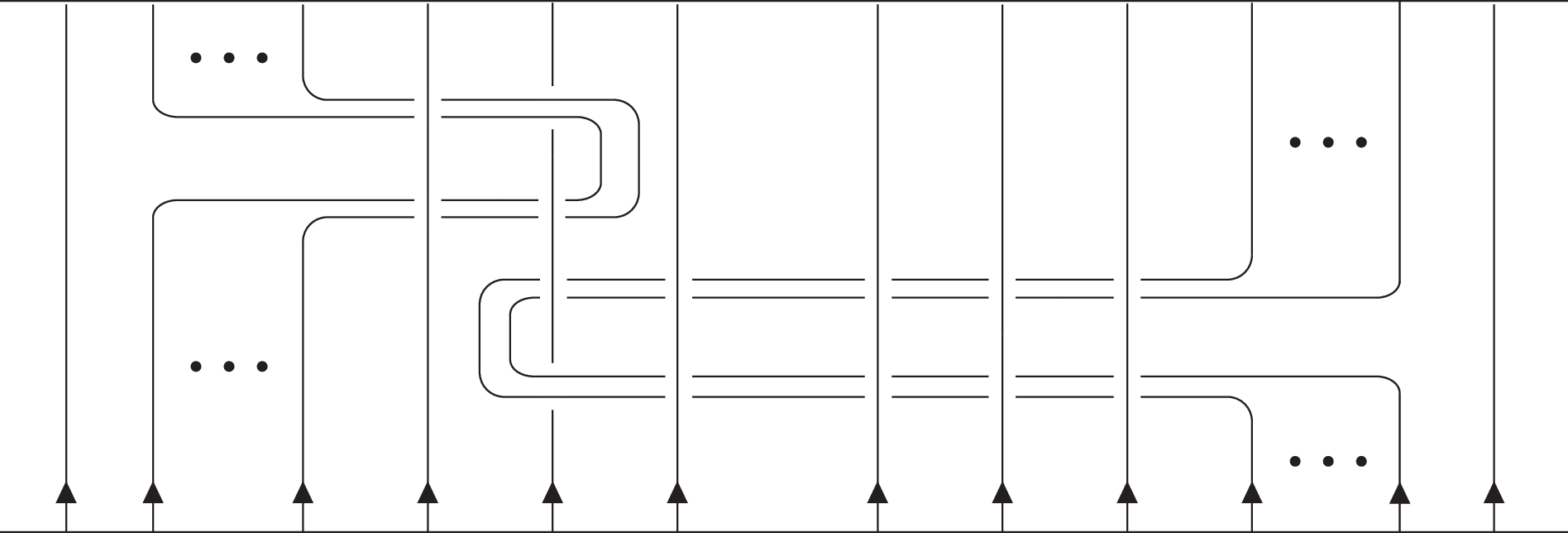}
\put(4,-13){$\cdots$} 
\put(23,-14){$\overline{il_i}$} \put(34,-13){$\cdots$} \put(50,-14){$\overline{i1}$} \put(70,-13){$\cdots$}
\put(93,-14){$\overline{st}$} \put(114,-13){$\cdots$}
\put(149,-13){$\cdots$} \put(174,-13){$st$} \put(194,-13){$\cdots$} \put(219,-13){$i1$} \put(230,-13){$\cdots$}
\put(246,-13){$il_i$} \put(260,-13){$\cdots$}
\end{overpic}}
$$ 
\vspace{0.2cm}
\caption{Generators $(\overline{x}_{st}, \overline{x}_{st})_i$ of $\mathcal{S}^*_G({\bm l})$.}\label{Ggenerator}
\end{figure}

Moreover, we see that 
$$(\overline{X}_s^{-1}X_s,1)_{ij} = (\overline{x}_{ij}^{-1}, \overline{x}_{ij}^{-1})_s \,\,\mbox{ and }\,\, (1,\overline{X}_s^{-1}X_s)_{ij} = (x_{ij}, x_{ij})_s$$
in $\mathcal{S}^*_G({\bm l})$ and have the following lemma. 

\begin{lemma}
$\mathcal{S}^{*}_{G}({\bm l})$ is generated by the elements $\displaystyle (\overline{x}_{st}, \overline{x}_{st})_i$. 
\end{lemma}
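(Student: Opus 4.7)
The plan is to combine three ingredients already in place. The discussion immediately preceding the lemma establishes that $\mathcal{S}^{*}_{G}({\bm l})$ is generated by three families, namely $(\overline{x}_{st},\overline{x}_{st})_i$, $(\overline{X}_s^{-1}X_s,1)_{ij}$, and $(1,\overline{X}_s^{-1}X_s)_{ij}$. Hence it suffices to rewrite each generator in the latter two families as a word in generators of the first family, and the lemma will follow.

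First, I would invoke the two identities displayed just above the lemma statement,
\[
(\overline{X}_s^{-1}X_s,1)_{ij} = (\overline{x}_{ij}^{-1},\overline{x}_{ij}^{-1})_s \quad\text{and}\quad (1,\overline{X}_s^{-1}X_s)_{ij} = (x_{ij},x_{ij})_s,
\]
each valid in $\mathcal{S}^{*}_{G}({\bm l})$. The first identity puts $(\overline{X}_s^{-1}X_s,1)_{ij}$ directly in the required form, as the inverse of an element of the first family. For the second identity, I would appeal to the earlier observation in the paper that, under the semidirect product structure of Theorem~\ref{CSLclassify} applied with respect to the $s$-colored components, the action of $(x_{ij},x_{ij})_s$ on a decomposed element $\theta{\bm g}$ conjugates $\bm g$ by $\theta^{-1}x_{ij}\theta$, while $(\overline{x}_{ij},\overline{x}_{ij})_s$ conjugates by $x_{ij}$. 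Since $\theta$ is built from lower-color data already generated by the $(\overline{x}_{kl},\overline{x}_{kl})_r$-family, the action of $(x_{ij},x_{ij})_s$ factors as a composition of actions by elements of the first family, as required.

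The main technical obstacle is the verification of the two displayed identities themselves. This is a diagrammatic argument carried out in $D^{2}\setminus\{p_{\overline{ml_m}},\dots,p_{ml_m}\}$: one must interpret the loop word $\overline{X}_s^{-1}X_s$ as a concrete strand winding once through the $\overline{s}$- and $s$-punctures and then use the defining $*$-relation of $\mathcal{CH}^{*}(\overline{\bm l}{\bm l})$ (triviality of crossings between $(i,j)$-th and $(\overline{i},k)$-th components) to collapse the picture to the simple form on the right-hand side. The delicate point is sign and orientation bookkeeping, so that the reduction yields $(\overline{x}_{ij}^{-1},\overline{x}_{ij}^{-1})_s$ rather than its inverse; once these identities are nailed down, everything else is formal.
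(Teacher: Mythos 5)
Your proposal is correct and follows essentially the paper's own route: the lemma is obtained there from exactly the same three ingredients, namely the three-family generating set of $\mathcal{S}^*_G({\bm l})$ from the preceding discussion, the two displayed identities (which the paper also leaves as a ``we see that'' diagrammatic check), and the earlier observation that the action of $(x_{ij}, x_{ij})_s$ on a decomposed element $\theta{\bm g}$ is realized by compositions of the actions of the $(\overline{x}_{kl}, \overline{x}_{kl})_s$. Only your phrasing of that last step is loose: the precise reason is that $\theta^{-1}x_{ij}\theta$ is a conjugate of $x_{ij}$ by a word in the meridians $x_{kl}$ with $k\neq s$, so the simultaneous conjugation of ${\bm g}$ by it factors through simultaneous conjugations by single meridians, each of which is the action of an element of the first family (or its inverse); this is exactly the partial-conjugation argument the paper invokes.
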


Then we have the Markov-type theorem for the colored string links and the spatial B-graphs as follows. 

\begin{theorem}
Let $\sigma$, $\sigma'\in \mathcal{CH}({\bm l})$. Then the G-closures of $\sigma$ and $\sigma'$ are component-homotopic as spatial B-graphs if and only if there is a sequence $\sigma=\sigma_0, \sigma_1,\dots,\sigma_n=\sigma'$ of elements of $\mathcal{CH}({\bm l})$ such that $\sigma_{j+1}=(\overline{x}_{st}, \overline{x}_{st})_i\cdot \sigma_j$ for some $i$, $s$ and $t$.  
\end{theorem}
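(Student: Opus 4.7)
The plan is to combine Theorem \ref{Markov-type-for-Bgraph} with the preceding lemma: the former reduces the problem to finding some $s \in \mathcal{S}_G({\bm l})$ with $s \cdot \sigma = \sigma'$, and the latter identifies the $(\overline{x}_{st}, \overline{x}_{st})_i$ as generators of $\mathcal{S}^{*}_G({\bm l})$. The sufficiency direction is essentially free: each $(\overline{x}_{st}, \overline{x}_{st})_i$ lies in $\mathcal{S}_G({\bm l})$, so the composition of the steps in any such sequence is itself an element of $\mathcal{S}_G({\bm l})$, and Theorem \ref{Markov-type-for-Bgraph} then gives that $\widehat{\sigma}$ and $\widehat{\sigma'}$ are component-homotopic.

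For necessity, the first step I would justify is that the action of $\mathcal{CH}(\overline{\bm l}{\bm l})$ on $\mathcal{CH}({\bm l})$ factors through the quotient $\mathcal{CH}^{*}(\overline{\bm l}{\bm l})$. Geometrically, the reason is that under the identification $\overline{ij} \leftrightarrow ij$ used in the composition, both the $(\overline{i}, j)$-th and $(i, k)$-th strands of $\Sigma$ get routed into $i$-colored components of $\Sigma \cdot \sigma$. A crossing between such strands in $\Sigma$ therefore becomes a same-color crossing in $\Sigma \cdot \sigma$, whose change is a CL-homotopy move. Consequently the class of $\Sigma \cdot \sigma$ in $\mathcal{CH}({\bm l})$ depends only on the image of $\Sigma$ in $\mathcal{CH}^{*}(\overline{\bm l}{\bm l})$, so the action of $\mathcal{S}_G({\bm l})$ descends to an action of $\mathcal{S}^{*}_G({\bm l})$.

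With that factoring in hand, I would take $s \in \mathcal{S}_G({\bm l})$ produced by Theorem \ref{Markov-type-for-Bgraph}, pass to its image $\overline{s} \in \mathcal{S}^{*}_G({\bm l})$, and invoke the preceding lemma to write $\overline{s} = \overline{g}_n \overline{g}_{n-1} \cdots \overline{g}_1$ with each $\overline{g}_j$ the image of some $(\overline{x}_{s_j t_j}, \overline{x}_{s_j t_j})_{i_j}^{\pm 1}$. Choosing lifts $g_j \in \mathcal{S}_G({\bm l})$, the factoring property ensures $g_n \cdots g_1 \cdot \sigma = s \cdot \sigma = \sigma'$, so the partial products $\sigma_j = g_j g_{j-1} \cdots g_1 \cdot \sigma$ give the required sequence. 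The main technical obstacle I anticipate is pinning down the factoring of the action carefully; this parallels the earlier observation in the paper that the G-cap $\Sigma \cdot G_0$ depends only on the image of $\Sigma$ in $\mathcal{CH}^{*}(\overline{\bm l}{\bm l})$, and should yield to the same kind of strand-by-strand geometric bookkeeping in the composition.
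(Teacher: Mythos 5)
Your proposal is correct and takes essentially the same route the paper intends: the theorem is stated there as the immediate combination of Theorem \ref{Markov-type-for-Bgraph} with the lemma that $\mathcal{S}^{*}_{G}({\bm l})$ is generated by the elements $(\overline{x}_{st}, \overline{x}_{st})_i$, together with the observation (which the paper makes for the G-cap and you make for the action on $\mathcal{CH}({\bm l})$) that crossings between $\overline{ij}$- and $ik$-strands become same-color crossings after composition, so the action descends to $\mathcal{S}^{*}_{G}({\bm l})$. The only point left tacit on both sides is that each $(\overline{x}_{st}, \overline{x}_{st})_i$ itself lies in $\mathcal{S}_{G}({\bm l})$, which holds because all $2l_i$ strands of color $i$ wind around the $\overline{st}$-strand simultaneously, amounting to dragging the color-$i$ vertex of $G_0$ around a meridian of that edge.
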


\subsection{The Habegger-Lin Algorithm for colored links and spatial graphs}
Habegger and Lin \cite{HL} showed that there is an algorithm which determine whether given two links are link-homotopic or not. 
The algorithm uses the actions of partial conjugations for string links. For the 4- and 5-component case, the actions are calculated in \cite{KM2, Gra, KM3} explicitly and we can run the algorithm. 
\par 
The algorithm is available for the CL-homotopy classes of colored links (resp. the component-homotopy classes of spatial graphs) since the properties of $\mathcal{H}(n)$ in Lemmas 3.2, 3.3 and 3.4 in \cite{HL} also hold for $\mathcal{CH}({\bm l})$ and $\mathcal{S}_{CL}$ (resp. $\mathcal{S}_G$). 
Thus there is an algorithm which determine whether given two colored links (resp. two B-graphs) are CL-homotopic (resp. component-homotopic) or not. 

\if0
\section{Examples}
In this section, we give group actions of generators $\overline{x}'_{ij}$ of partial conjugations for $\mathcal{CH}({\bm l})$ and classify $\mathcal{CL}({\bm l})$ in the case ${\bm l}=(1_1,2_1,3_1,4_2)$. This also induces a classification of $\mathcal{G}({\bm l})$. 
For simplicity, in this section, we call the $(i,1)$-st component just the $i$-th component for $i=1,2,3$, the $(4,1)$-st component the $a$-th component and the $(4,2)$-nd component the $b$-th component. 
\par
In \cite{KM3}, the authors calculated the group actions of $\overline{x}_{ij}:=(\overline{x}_i,\overline{x}_i)_j$ for the link-homotopy classes $\mathcal{H}(5)$ of the 5-component string links and classified (gave a presentation of) the set of the link-homotopy classes $\mathcal{L}(5)$ of 5-component links. The generators $\overline{x}_{ij}$ can be replaced by the other generators $\overline{x}'_{ij}$ which have the simpler actions.
\par
By replacing the component numbers $4$ and $5$ to $a$ and $b$ respectively in the figure of the canonical form of $5$-component string links in \cite{KM3} and omitting the claspers which have the indices both $a$ and $b$, we have the canonical form of the colored string links with the component decomposition ${\bm l}=(1_1,2_1,3_1,4_2)$ in Figure \ref{CanoniForm01}. The actions of the generators $\overline{x}'_{ij}$ for the canonical form is obtained from the table of the actions $\overline{x}'_{ij}$ for $\mathcal{H}(5)$ in \cite{KM3} by replacing the indices $4$ and $5$ to $a$ and $b$ respectively and omitting the actions and the the numbers of claspers which have the both $a$ and $b$ in the indices. 
Moreover, we also have the actions of modified commutators $[\overline{x}_{ij}, \overline{x}_{kl}]'$ for $\mathcal{CL}({\bm l})$ in similar way, which are in Table \ref{ModCommCL}, from those for $\mathcal{H}(5)$ in \cite{KM3}. The actions of $\overline{x}'_{ij}$ up to those of $[\overline{x}_{ij}, \overline{x}_{kl}]'$ are in Table \ref{ParConj-ColoredSL}. 

\begin{figure}[h]

\vspace{0.2cm}
\caption{The canonical form for $\mathcal{CL}(1_1,2_1,3_1,4_2)$.}\label{CanoniForm01}
\end{figure}

%%%%%%%%%%%%%%%%%% Actions of the simplified partial conjugation  (start) %%%%%%%%%%%%%%%%%%%%%%%%%%%

\begin{table}[phtb] 
   \caption{Generators of partial conjugations $\overline{x}'_{ij}$ for colored string links in $\mathcal{CL}(1_1,2_1,3_1,4_2)$.} \label{ParConj-ColoredSL}
\begin{center}
  \begin{tabular}{|c|l|l|l|l|} \hline
      & $\overline{x}'_{12}$ & $\overline{x}'_{13}$ & $\overline{x}'_{1a}$ & $\overline{x}'_{1b}$ \\ \hline 
    $y_{123}$ & $y_{23}$ & $-y_{23}$ & $0$ & $0$ \\ \hline 
    $y_{12a}$ & $y_{2a}$ & $0$ & $-y_{2a}$ & $0$\\ \hline
    $y_{13a}$ & $0$ & $y_{3a}$ & $-y_{3a}$ & $0$\\ \hline
    $y_{12b}$ & $y_{2b}$ & $0$ & $0$ & $-y_{2b}$\\ \hline
    $y_{13b}$ & $0$ & $y_{3b}$ & $0$ & $-y_{3b}$\\ \hline
    $y_{23a}$ & $0$ & $0$ & $0$ & $0$\\ \hline
    $y_{23b}$ & $0$ & $0$ & $0$ & $0$\\ \hline\hline
    $y_{123a}$ & $y_{23a}$ & $0$ & $-y_{23a}$ & $0$\\ \hline
    $y_{132a}$ & $0$ & $-y_{23a}$ & $y_{23a}$ & $0$\\ \hline
    $y_{123b}$ & $y_{23b}$ & $0$ & $0$ & $-y_{23b}$\\ \hline
    $y_{132b}$ & $0$ & $-y_{23b}$ & $0$ & $y_{23b}$  \\ \hline
  \end{tabular}\quad
  \begin{tabular}{|c|l|l|l|l|l|} \hline
      & $\overline{x}'_{21}$ & $\overline{x}'_{23}$ & $\overline{x}'_{2a}$ & $\overline{x}'_{2b}$\\ \hline 
    $y_{123}$ & $-y_{13}$ & $y_{13}$ & $0$ & $0$\\ \hline 
    $y_{12a}$ & $-y_{1a}$ & $0$ & $y_{1a}$ & $0$\\ \hline
    $y_{13a}$ & $0$ & $0$ & $0$ & $0$\\ \hline
    $y_{12b}$ & $-y_{1b}$ & $0$ & $0$ & $y_{1b}$\\ \hline
    $y_{13b}$ & $0$ & $0$ & $0$ & $0$\\ \hline
    $y_{23a}$ & $0$ & $y_{3a}$ & $-y_{3a}$ & $0$\\ \hline
    $y_{23b}$ & $0$ & $y_{3b}$ & $0$ & $-y_{3b}$\\ \hline \hline
    $y_{123a}$ & $-y_{13a}$ & $y_{13a}$ & $0$ & $0$\\ \hline
    $y_{132a}$ & $0$ & $-y_{13a}$ & $y_{13a}$ & $0$\\ \hline
    $y_{123b}$ & $-y_{13b}$ & $y_{13b}$ & $0$ & $0$\\ \hline
    $y_{132b}$ & $0$ & $-y_{13b}$ & $0$ & $y_{13b}$\\ \hline
  \end{tabular} 
\end{center}
\end{table}

\begin{table}[htb] 
\begin{center}
  \begin{tabular}{|c|l|l|l|l|} \hline
      & $\overline{x}'_{31}$ & $\overline{x}'_{32}$ & $\overline{x}'_{3a}$ & $\overline{x}'_{3b}$ \\ \hline 
    $y_{123}$ & $y_{12}$ & $-y_{12}$ & $0$ & $0$\\ \hline 
    $y_{12a}$ & $0$ & $0$ & $0$ & $0$\\ \hline
    $y_{13a}$ & $-y_{1a}$ & $0$ & $y_{1a}$ & $0$\\ \hline
    $y_{12b}$ & $0$ & $0$ & $0$ & $0$\\ \hline
    $y_{13b}$ & $-y_{1b}$ & $0$ & $0$ & $y_{1b}$\\ \hline
    $y_{23a}$ & $0$ & $-y_{2a}$ & $y_{2a}$ & $0$\\ \hline
    $y_{23b}$ & $0$ & $-y_{2b}$ & $0$ & $y_{2b}$\\ \hline\hline
    $y_{123a}$ & $0$ & $-y_{12a}$ & $y_{12a}$ & $0$\\ \hline 
    $y_{132a}$ & $-y_{12a}$ & $y_{12a}$ & $0$ & $0$\\ \hline
    $y_{123b}$ & $0$ & $-y_{12b}$ & $0$ & $y_{12b}$\\ \hline
    $y_{132b}$ & $-y_{12b}$ & $y_{12b}$ & $0$ & $0$\\ \hline
  \end{tabular} \quad
  \begin{tabular}{|c|l|l|l|} \hline
      & $\overline{x}'_{a1}$ & $\overline{x}'_{a2}$ & $\overline{x}'_{a3}$\\ \hline 
    $y_{123}$ & $0$ & $0$ & $0$ \\ \hline 
    $y_{12a}$ & $y_{12}$ & $-y_{12}$ & $0$ \\ \hline
    $y_{13a}$ & $y_{13}$ & $0$ & $-y_{13}$ \\ \hline
    $y_{12b}$ & $0$ & $0$ & $0$ \\ \hline
    $y_{13b}$ & $0$ & $0$ & $0$ \\ \hline
    $y_{23a}$ & $0$ & $y_{23}$ & $-y_{23}$ \\ \hline
    $y_{23b}$ & $0$ & $0$ & $0$ \\ \hline\hline
    $y_{123a}$ & $y_{123} $ & $0$ & $-y_{123}$ \\ \hline 
    $y_{132a}$ & $-y_{123}$ & $y_{123}$ & $0$ \\ \hline
    $y_{123b}$ & $0$ & $0$ & $0$ \\ \hline
    $y_{132b}$ & $0$ & $0$ & $0$ \\ \hline
  \end{tabular} 
\end{center}
\end{table}

\begin{table}[htb] 
\begin{center}
  \begin{tabular}{|c|l|l|l|} \hline
      & $\overline{x}'_{b1}$ & $\overline{x}'_{b2}$ & $\overline{x}'_{b3}$ \\ \hline 
    $y_{123}$ & $0$ & $0$ & $0$ \\ \hline 
    $y_{12a}$ & $0$ & $0$ & $0$ \\ \hline
    $y_{13a}$ & $0$ & $0$ & $0$ \\ \hline
    $y_{12b}$ & $y_{12}$ & $-y_{12}$ & $0$ \\ \hline
    $y_{13b}$ & $y_{13}$ & $0$ & $-y_{13}$ \\ \hline
    $y_{23a}$ & $0$ & $0$ & $0$ \\ \hline
    $y_{23b}$ & $0$ & $y_{23}$ & $-y_{23}$ \\ \hline\hline
    $y_{123a}$ & $0$ & $0$ & $0$ \\ \hline 
    $y_{132a}$ & $0$ & $0$ & $0$ \\ \hline
    $y_{123b}$ & $y_{123}$ & $0$ & $-y_{123}$ \\ \hline
    $y_{132b}$ & $-y_{123}$ & $y_{123}$ & $0$ \\ \hline
  \end{tabular} 
\end{center}
\end{table}
%%%%%%%%%%%%%%%%%% Actions of the simplified partial conjugation  (end) %%%%%%%%%%%%%%%%%%%%%%%%%%%

Let $\overline{\chi}'_{st,i}:=\prod_j$
From Table \ref{ParConj-ColoredSL}, we have the actions of $\overline{\chi}'_{st,i}:=\prod_j$

%%%%%%%%%%%%%%%%%% Actions of the simplified first commutators  (start) %%%%%%%%%%%%%%%%%%%%%%%%%%%
\begin{table}[htb] 
\begin{center}
   \caption{Modified commutators of $\overline{x}_{ij}$ for colored string links in $\mathcal{CL}(1,2,3,4_2)$.} \label{ModCommCL}
  \begin{tabular}{|c|l|l|l|l|l|l|l|} \hline
      & $[\overline{x}_{21}, \overline{x}_{31}]'$ & $[\overline{x}_{21}, \overline{x}_{a1}]'$ & $[\overline{x}_{21}, \overline{x}_{b1}]'$ & $[\overline{x}_{31}, \overline{x}_{a1}]'$ & $[\overline{x}_{31}, \overline{x}_{b1}]'$  \\ \hline 
    $y_{123a}$ & $y_{1a}$ & $0$ & $0$ & $-y_{12}$ & $0$ \\ \hline 
    $y_{132a}$ & $-y_{1a}$ & $-y_{13}$ & $0$ & $0$ & $0$ \\ \hline
    $y_{123b}$ & $y_{1b}$ & $0$ & $0$ & $0$ & $-y_{12}$ \\ \hline
    $y_{132b}$ & $-y_{1b}$ & $0$ & $-y_{13}$ & $0$ & $0$ \\ \hline
  \end{tabular} \\
\end{center}

\begin{center}
  \begin{tabular}{|c|l|l|l|l|l|l|l|l|l|} \hline
    & $[\overline{x}_{a1}, \overline{x}_{b1}]'$ & $[\overline{x}_{12}, \overline{x}_{32}]'$  & $[\overline{x}_{12}, \overline{x}_{a2}]'$ & $[\overline{x}_{12}, \overline{x}_{b2}]'$ & $[\overline{x}_{32}, \overline{x}_{a2}]'$  \\ \hline 
    $y_{123a}$ & $0$ & $0$ & $y_{23}$ & $0$ & $y_{12}$ \\ \hline  
    $y_{132a}$ & $0$ & $-y_{2a}$ & $-y_{23}$ & $0$ & $0$ \\ \hline
    $y_{123b}$ & $0$ & $0$ & $0$ & $y_{23}$ & $0$\\ \hline
    $y_{132b}$ & $0$ & $-y_{2b}$ & $0$ & $-y_{23}$ & $0$\\ \hline
  \end{tabular} \\
\end{center}
\end{table}

\begin{table}[htb] 
\begin{center}
  \begin{tabular}{|c|l|l|l|l|l|l|l|} \hline
       & $[\overline{x}_{32}, \overline{x}_{b2}]'$ & $[\overline{x}_{a2}, \overline{x}_{b2}]'$ & $[\overline{x}_{13}, \overline{x}_{23}]'$ & $[\overline{x}_{13}, \overline{x}_{a3}]'$ & $[\overline{x}_{13}, \overline{x}_{b3}]'$ \\ \hline 
    $y_{123a}$ & $0$ & $0$ & $-y_{3a}$ & $-y_{23}$ & $0$ \\ \hline  
    $y_{132a}$ & $0$ & $0$ & $0$ & $y_{23}$ & $0$ \\ \hline
    $y_{123b}$ & $y_{12}$ & $0$ & $-y_{3b}$ & $0$ & $-y_{23}$ \\ \hline
    $y_{132b}$ & $0$ & $0$ & $0$ & $0$ & $y_{23}$ \\ \hline
  \end{tabular} \\
\end{center}

\begin{center}
  \begin{tabular}{|c|l|l|l|l|l|l|} \hline
      & $[\overline{x}_{23}, \overline{x}_{a3}]'$ & $[\overline{x}_{23}, \overline{x}_{b3}]'$ & $[\overline{x}_{1a}, \overline{x}_{2a}]'$ & $[\overline{x}_{1a}, \overline{x}_{3a}]'$ & $[\overline{x}_{2a}, \overline{x}_{3a}]'$ \\ \hline  
    $y_{123a}$ & $0$ & $0$ & $y_{3a}$ & $0$ & $-y_{1a}$ \\ \hline 
    $y_{132a}$ & $y_{13}$ & $0$ & $0$ & $y_{2a}$ & $y_{1a}$ \\ \hline
    $y_{123b}$ & $0$ & $0$ & $0$ & $0$ & $0$\\ \hline
    $y_{132b}$ & $0$ & $y_{13}$ & $0$ & $0$ & $0$\\ \hline
  \end{tabular} \\
\end{center}
\end{table}
%%%%%%%%%%%%%%%%%% Actions of the simplified first commutators  (end) %%%%%%%%%%%%%%%%%%%%%%%%%%%

%%%%%%%%%%%%%%%%%% Actions of the simplified partial conjugation  (start) %%%%%%%%%%%%%%%%%%%%%%%%%%%

\begin{table}[phtb] 
   \caption{Generators of partial conjugations $\overline{x}'_{ij}$ for B-graphs in $\mathcal{G}(1_1,2_1,3_1,4_2)$.} \label{ParConj-SG}
\begin{center}
  \begin{tabular}{|c|l|l|l|} \hline
      & $\overline{x}'_{12}$ & $\overline{x}'_{13}$ & $\overline{x}'_{14}$ \\ \hline 
    $y_{123}$ & $y_{23}$ & $-y_{23}$ & $0$ \\ \hline 
    $y_{12a}$ & $y_{2a}$ & $0$ & $-y_{2a}$\\ \hline
    $y_{13a}$ & $0$ & $y_{3a}$ & $-y_{3a}$\\ \hline
    $y_{12b}$ & $y_{2b}$ & $0$ & $-y_{2b}$\\ \hline
    $y_{13b}$ & $0$ & $y_{3b}$ & $-y_{3b}$\\ \hline
    $y_{23a}$ & $0$ & $0$ & $0$ \\ \hline
    $y_{23b}$ & $0$ & $0$ & $0$ \\ \hline\hline
    $y_{123a}$ & $y_{23a}$ & $0$ & $-y_{23a}$ \\ \hline
    $y_{132a}$ & $0$ & $-y_{23a}$ & $y_{23a}$ \\ \hline
    $y_{123b}$ & $y_{23b}$ & $0$ & $-y_{23b}$\\ \hline
    $y_{132b}$ & $0$ & $-y_{23b}$ & $y_{23b}$  \\ \hline
  \end{tabular}\quad
  \begin{tabular}{|c|l|l|l|l|} \hline
      & $\overline{x}'_{21}$ & $\overline{x}'_{23}$ & $\overline{x}'_{24}$ \\ \hline 
    $y_{123}$ & $-y_{13}$ & $y_{13}$ & $0$\\ \hline 
    $y_{12a}$ & $-y_{1a}$ & $0$ & $y_{1a}$\\ \hline
    $y_{13a}$ & $0$ & $0$ & $0$\\ \hline
    $y_{12b}$ & $-y_{1b}$ & $0$ & $y_{1b}$\\ \hline
    $y_{13b}$ & $0$ & $0$ & $0$ \\ \hline
    $y_{23a}$ & $0$ & $y_{3a}$ & $-y_{3a}$\\ \hline
    $y_{23b}$ & $0$ & $y_{3b}$ & $-y_{3b}$\\ \hline \hline
    $y_{123a}$ & $-y_{13a}$ & $y_{13a}$ & $0$\\ \hline
    $y_{132a}$ & $0$ & $-y_{13a}$ & $y_{13a}$\\ \hline
    $y_{123b}$ & $-y_{13b}$ & $y_{13b}$ & $0$\\ \hline
    $y_{132b}$ & $0$ & $-y_{13b}$ & $y_{13b}$\\ \hline
  \end{tabular} 
\end{center}
\end{table}

\begin{table}[htb] 
\begin{center}
  \begin{tabular}{|c|l|l|l|} \hline
      & $\overline{x}'_{31}$ & $\overline{x}'_{32}$ & $\overline{x}'_{34}$ \\ \hline 
    $y_{123}$ & $y_{12}$ & $-y_{12}$ & $0$\\ \hline 
    $y_{12a}$ & $0$ & $0$ & $0$\\ \hline
    $y_{13a}$ & $-y_{1a}$ & $0$ & $y_{1a}$\\ \hline
    $y_{12b}$ & $0$ & $0$ & $0$ \\ \hline
    $y_{13b}$ & $-y_{1b}$ & $0$ & $y_{1b}$\\ \hline
    $y_{23a}$ & $0$ & $-y_{2a}$ & $y_{2a}$\\ \hline
    $y_{23b}$ & $0$ & $-y_{2b}$ & $y_{2b}$\\ \hline\hline
    $y_{123a}$ & $0$ & $-y_{12a}$ & $y_{12a}$ \\ \hline 
    $y_{132a}$ & $-y_{12a}$ & $y_{12a}$ & $0$ \\ \hline
    $y_{123b}$ & $0$ & $-y_{12b}$ & $y_{12b}$\\ \hline
    $y_{132b}$ & $-y_{12b}$ & $y_{12b}$ & $0$ \\ \hline
  \end{tabular} \quad
  \begin{tabular}{|c|l|l|l|} \hline
      & $\overline{x}'_{41}$ & $\overline{x}'_{42}$ & $\overline{x}'_{43}$\\ \hline 
    $y_{123}$ & $0$ & $0$ & $0$ \\ \hline 
    $y_{12a}$ & $y_{12}$ & $-y_{12}$ & $0$ \\ \hline
    $y_{13a}$ & $y_{13}$ & $0$ & $-y_{13}$ \\ \hline
    $y_{12b}$ & $0$ & $0$ & $0$ \\ \hline
    $y_{13b}$ & $0$ & $0$ & $0$ \\ \hline
    $y_{23a}$ & $0$ & $y_{23}$ & $-y_{23}$ \\ \hline
    $y_{23b}$ & $0$ & $0$ & $0$ \\ \hline\hline
    $y_{123a}$ & $y_{123} $ & $0$ & $-y_{123}$ \\ \hline 
    $y_{132a}$ & $-y_{123}$ & $y_{123}$ & $0$ \\ \hline
    $y_{123b}$ & $0$ & $0$ & $0$ \\ \hline
    $y_{132b}$ & $0$ & $0$ & $0$ \\ \hline
  \end{tabular} 
\end{center}
\end{table}

\begin{table}[htb] 
\begin{center}
  \begin{tabular}{|c|l|l|l|} \hline
      & $\overline{x}'_{51}$ & $\overline{x}'_{52}$ & $\overline{x}'_{53}$ \\ \hline 
    $y_{123}$ & $0$ & $0$ & $0$ \\ \hline 
    $y_{12a}$ & $0$ & $0$ & $0$ \\ \hline
    $y_{13a}$ & $0$ & $0$ & $0$ \\ \hline
    $y_{12b}$ & $y_{12}$ & $-y_{12}$ & $0$ \\ \hline
    $y_{13b}$ & $y_{13}$ & $0$ & $-y_{13}$ \\ \hline
    $y_{23a}$ & $0$ & $0$ & $0$ \\ \hline
    $y_{23b}$ & $0$ & $y_{23}$ & $-y_{23}$ \\ \hline\hline
    $y_{123a}$ & $0$ & $0$ & $0$ \\ \hline 
    $y_{132a}$ & $0$ & $0$ & $0$ \\ \hline
    $y_{123b}$ & $y_{123}$ & $0$ & $-y_{123}$ \\ \hline
    $y_{132b}$ & $-y_{123}$ & $y_{123}$ & $0$ \\ \hline
  \end{tabular} 
\end{center}
\end{table}
%%%%%%%%%%%%%%%%%% Actions of the simplified partial conjugation  (end) %%%%%%%%%%%%%%%%%%%%%%%%%%%
\fi

%\section{HL-homotopy of handlebody-links}

\end{document}